\newtheorem{theorem}{Theorem}[section]
\newtheorem{lemma}[theorem]{Lemma}
\newtheorem{proposition}[theorem]{Proposition}
\theoremstyle{definition}
\theoremstyle{remark}
\newtheorem{remark}[theorem]{Remark}
\numberwithin{equation}{section}
\begin{document}

\title[Strong convergence rate of schemes for SDEs driven by fBms]{Strong convergence rate of Runge--Kutta methods and simplified step-$N$ Euler schemes for SDEs driven by fractional Brownian motions}


\author{Jialin Hong}
\address{LSEC, ICMSEC, Academy of Mathematics and Systems Science, Chinese Academy of Sciences, Beijing 100190, China; School of Mathematical Sciences, University of Chinese Academy of Sciences, Beijing 100049, China}
\curraddr{}
\email{hjl@lsec.cc.ac.cn}
\thanks{Authors are funded by National Natural Science Foundation of China (NO. 91530118, NO. 91130003, NO. 11021101, NO. 91630312 and NO. 11290142)}

\author{Chuying Huang}
\address{LSEC, ICMSEC, Academy of Mathematics and Systems Science, Chinese Academy of Sciences, Beijing 100190, China; School of Mathematical Sciences, University of Chinese Academy of Sciences, Beijing 100049, China}
\curraddr{}
\email{huangchuying@lsec.cc.ac.cn (Corresponding author)}
\thanks{}

\author{Xu Wang}
\address{LSEC, ICMSEC, Academy of Mathematics and Systems Science, Chinese Academy of Sciences, Beijing 100190, China; School of Mathematical Sciences, University of Chinese Academy of Sciences, Beijing 100049, China}
\curraddr{}
\email{wangxu@lsec.cc.ac.cn}
\thanks{}

\subjclass[2010]{Primary 60H10, 60H35, 65C30}

\date{}

\dedicatory{}

\keywords{Runge--Kutta method, simplified step-$N$ Euler scheme, fractional Brownian motion, strong convergence rate}

\begin{abstract}
	This paper focuses on the strong convergence rate of both Runge--Kutta methods and simplified step-$N$ Euler schemes for stochastic differential equations driven by multi-dimensional fractional Brownian motions with $H\in(\frac12,1)$. Based on the continuous dependence of both stage values and numerical schemes on driving noises, order conditions of Runge--Kutta methods are proposed for the optimal strong convergence rate $2H-\frac12$. This provides an alternative way to analyze the convergence rate of explicit schemes by adding `stage values' such that the schemes are comparable with Runge--Kutta methods. Taking advantage of this technique, the optimal strong convergence rate of simplified step-N Euler scheme is obtained, which gives an answer to a conjecture in \cite{Deya} when $H\in(\frac12,1)$. Numerical experiments verify the theoretial convergence rate.
\end{abstract}

\maketitle

\section{Introduction}
In this paper, we investigate the strong convergence rate of numerical schemes for the following stochastic differential equation (SDE)
\begin{align}\label{sde}
\begin{split}
dY_t&=V(Y_t)dX_t=\sum_{l=1}^{d}V_l(Y_t)dX_t^l,\quad t\in(0,T],\\
Y_0&=y\in \mathbb{R}^m,
\end{split}
\end{align}
where $X_t=(X^1_t,\cdots,X^d_t)^\top\in\mathbb{R}^d$ with $X^1_t=t$ and $X^2_t,\cdots,X^d_t$ being independent fractional Brownian motions with Hurst parameter $H\in(\frac12,1)$. The  well-posedness is interpreted through Young's integral or fractional calculus (see  \cite{Friz,Lyons,Zahle98RTRF} and references therein) pathwisely.

The fractional Brownian motion (fBm) $\{B^H_t\}_{t\in[0,T]}$ on some probability space $(\Omega,\mathcal{F},\mathbb{P})$ is a centered Gaussian process with continuous sample paths. Its covariance satisfies
\begin{align}\label{covariance}
\mathbb{E}\Big[B^H_sB^H_t\Big]=\frac12\bigg(t^{2H}+s^{2H}-|t-s|^{2H}\bigg),\quad \forall\ s,t\in [0,T],
\end{align}
where $H\in(0,1)$ is the Hurst parameter. The fBm is a semi-martingale and Markovian process only when $H=\frac12$, that is, standard Brownian motion. Otherwise, the process exhibits long-range or short-range dependence when $H>\frac12$ or $H<\frac12$ respectively. It fits better than Markovian ones in models of economics, fluctuations of solids, hydrology and so on, which motivates numerous researches (see e.g. \cite{MJ68SIAMRev}).
However, it brings more obstacles in both the simulation of noises and analysis of optimal strong convergence rate. 

On the one hand, nontrivial covariance causes difficulties in simulating iterated integrals of fBms from Taylor expansion in multi-dimensional case. 
For the standard Brownian case, iterated integrals can be simulated by specific independent and identically distributed Gaussian random variables. 
For the case $H\neq \frac12$, other techniques need to be explored.
An implementable choice is substituting the $N$-level iterated integral of $X$ by $\frac{1}{N!}(\Delta X_k)^N$ directly, $N\ge 2$. The corresponding numerical schemes are called simplified step-$N$ Euler schemes (see \cite{MLMC16,Deya,Wzk,GN09AIHP}). Another way is taking advantage of internal stages values to design Runge--Kutta methods. These methods are derivative free and can be particularly chosen as structure-preserving methods or stability preserving methods (see \cite{GGH15mcom,HLS06mcom,Milstein} and references therein).

On the other hand, without properties of martingale, approaches to analyze the convergence rate for schemes in fractional setting are different from those via the fundamental convergence theorem in standard Brownian case. In \cite{Deya}, authors analyze the modified Milstein scheme, which is called simplified step-$2$ Euler scheme in this paper. They smooth noises by piecewise-linear approximations, i.e., the Wong--Zakai approximations, and obtain the pathwise convergence rate $(H-\frac13)^-$ in H\"older norm for any $H\in(\frac13,1)$. They conjecture that the optimal rate in supremum norm is $2H-\frac12$ based on the strong convergence rate of the L\'evy area of $X$. In \cite{CN2017}, by estimates for the L\'evy area type processes, authors prove the optimal strong convergence rate of Crank-Nicolson scheme is $2H-\frac12$ in general. Namely, denote by $Y^n$ the numerical solution with time step $h=\frac{T}{n}$, then
\begin{align}\label{CN}
\sup_{t\in[0,T]}\big\|Y_t-Y^n_t\big\|_{L^p(\Omega)}\le C h^{2H-\frac12},\quad p\ge 1.
\end{align}
It inspires us a potential approach to gain the optimal global strong convergence rate for the schemes under study without utilizing the Wong--Zakai approximations.

Our main idea is regarding general Runge--Kutta methods as implicit ones determined through the internal stage values. We show that the constructed continuous versions of internal stage values and numerical solutions are continuously dependent on the driving noise $X$. This robustness coincides with the property of the exact solution. Combining the estimates of  internal stage values and that of iterated integrals of $X$ (see \cite{CN2017,HuTaylor}), we obtain order conditions of the optimal strong convergence rate for Runge--Kutta methods.  Namely, if the coefficients of a Runge--Kutta method satisfy \eqref{condition}, then
\begin{align}\label{RKintro}
\bigg\|\sup_{t\in[0,T]}|Y_t-Y^n_t|\bigg\|_{L^p(\Omega)}\le C h^{2H-\frac{1}{2}},\quad p\ge 1.
\end{align}
Notice that condition \eqref{condition} can be satisfied by Crank-Nicolson scheme and we give the $L^p(\Omega)$-estimate for the error in supremum norm. 

Furthermore, we compare simplified step-$N$ schemes with Runge-Kutta methods satisfying condition \eqref{condition}. By means of adding internal stage values to explicit simplified step-$N$ Euler schemes, we express these schemes in an implicit way. This approach leads us to the optimal stong convergence rate $2H-\frac12$ by avoiding the estimation of the Wong--Zakai approximations. Our result gives an answer to the conjecture in \cite{Deya} for $H\in(\frac12,1)$. Numerical experiments are represented to verify this optimal rate. For the case $H\in(\frac13,\frac12)$ where equation \eqref{sde} is understood in the rough path framework (see e.g. \cite{Davie,Friz,Lyons}), the optimal strong convergence rate of simplified step-$2$ Euler scheme is still an open problem. We refer to related works \cite{MLMC16,Firstorder2017} for more details.

The paper is organized as follows. In Section \ref{sec2}, we recall some definitions and results about fractional calculus and fBms. We prove the solvability of implicit Runge--Kutta methods and the continuous dependence of continuous versions of methods under study with respect to driving noises in H\"older semi-norm in Section \ref{sec3}. The order conditions for Runge--Kutta methods are derived for the strong convergence rate $2H-\frac12$ in Section \ref{sec4}. Comparing the simplified step-$N$ Euler schemes with a Runge--Kutta method satisfying \eqref{condition}, we get the same strong convergence rate, which coincides with the conjecture given by \cite{Deya}. Numerical experiments are performed in Section \ref{sec5}.

\section{Preliminaries}\label{sec2}
In this section, we introduce some notations, definitions and results about fractional calculus and fBms. They are essential for us to prove the properties and strong convergence rates of numerical schemes in subsequent sections. We use $C$ as a generic constant which could be different from line to line.
\subsection{Fractional calculus}
Denote by $\mathcal{C}([0,T];\mathbb{R}^d)$ the space of continuous functions from  $[0,T]$ to $\mathbb{R}^d$. For any $f\in \mathcal{C}([0,T];\mathbb{R}^d)$, $0\le s<t \le T$ and $0<\beta\le1$, the $\beta$-H\"older semi-norm of $f$ is defined by 
\begin{align*}
\|f\|_{s,t,\beta}:=\sup\bigg\{\frac{|f_v-f_u|}{(v-u)^\beta},~s\le u<v\le t\bigg\},
\end{align*}
where $|\cdot|$ is the Euclid norm in $\mathbb{R}^d$.
Especially, we use $\|f\|_{\beta}:=\|f\|_{0,T,\beta}$ for short. The H\"older semi-norm can be expressed in an integral form by the Besov-H\"older embedding, which is a corollary from Garsia-Rademich-Rumsey inequality.
\begin{lemma} (\cite[Corollary A.2]{Friz})\label{Besov}
	Let $q>1$, $\alpha\in(\frac{1}{q},1)$ and $f\in \mathcal{C}([0,T];\mathbb{R}^d)$. Then there exists a constant $C=C(\alpha,q)$ such that for all $0\le s < t\le T$, 
	\begin{align*}
	\|f\|^q_{s,t,\alpha-\frac{1}{q}}\le C \int_{s}^{t} \int_{s}^{t} \frac{|f_u-f_v|^q}{|u-v|^{1+q\alpha}}dudv.
	\end{align*}
\end{lemma}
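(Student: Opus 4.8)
The plan is to deduce the estimate from the Garsia--Rodemich--Rumsey (GRR) inequality, which is the standard tool behind Besov--H\"older embeddings; since the excerpt already quotes the result from \cite{Friz}, I regard the GRR inequality itself as the only substantive input. Recall its statement: if $\Psi,p\colon[0,\infty)\to[0,\infty)$ are continuous, strictly increasing, with $\Psi(0)=p(0)=0$ and $\Psi(x)\to\infty$ as $x\to\infty$, and $g\in\mathcal{C}([a,b];\mathbb{R}^d)$ satisfies
\begin{align*}
B:=\int_a^b\int_a^b\Psi\!\left(\frac{|g_u-g_v|}{p(|u-v|)}\right)du\,dv<\infty,
\end{align*}
then for every $a\le u<v\le b$,
\begin{align*}
|g_v-g_u|\le 8\int_0^{v-u}\Psi^{-1}\!\left(\frac{4B}{r^2}\right)dp(r).
\end{align*}

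First I would apply this on $[a,b]=[s,t]$ with $g=f$ and the choices $\Psi(x)=x^{q}$, so $\Psi^{-1}(y)=y^{1/q}$, and $p(r)=r^{\alpha+\frac1q}$, so $dp(r)=\big(\alpha+\tfrac1q\big)r^{\alpha+\frac1q-1}dr$. The point of this choice of $p$ is that $p(|u-v|)^{q}=|u-v|^{1+q\alpha}$, hence the constant $B$ coincides exactly with the double integral on the right-hand side of the asserted inequality, which we may assume finite (otherwise there is nothing to prove). Substituting into the GRR bound gives, for $s\le u<v\le t$,
\begin{align*}
|f_v-f_u|\le 8\big(\alpha+\tfrac1q\big)(4B)^{1/q}\int_0^{v-u}r^{\alpha-\frac1q-1}dr=\frac{8\big(\alpha+\frac1q\big)}{\alpha-\frac1q}\,(4B)^{1/q}(v-u)^{\alpha-\frac1q}.
\end{align*}
This is the single place where the hypothesis is used in an essential way: the exponent $\alpha-\frac1q-1>-1$ precisely because $\alpha>\frac1q$, so the $r$-integral converges and equals $(v-u)^{\alpha-1/q}/(\alpha-\tfrac1q)$.

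To conclude I would divide by $(v-u)^{\alpha-\frac1q}$ and take the supremum over $s\le u<v\le t$, obtaining $\|f\|_{s,t,\alpha-\frac1q}\le C(\alpha,q)B^{1/q}$ with $C(\alpha,q)=4^{1/q}\cdot\frac{8(\alpha+1/q)}{\alpha-1/q}$; raising to the $q$-th power is exactly the claimed inequality. Passing from a fixed subinterval to the full seminorm $\|f\|_{s,t,\alpha-\frac1q}$ is automatic since the integrand is nonnegative, so $\int_u^v\int_u^v\le\int_s^t\int_s^t$ and the same bound $B$ serves uniformly. The computation is routine; the only subtlety I would be careful about is the exponent bookkeeping in the choice of $p$ --- one takes $p(r)^q=r^{1+q\alpha}$ rather than $r^{q\alpha}$, and this shift by one power is exactly what converts the GRR integrability requirement into the natural condition $\alpha>\frac1q$ and delivers the H\"older exponent $\alpha-\frac1q$.
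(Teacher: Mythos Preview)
Your proof is correct and follows exactly the route the paper indicates: the lemma is stated there without proof, merely cited from \cite[Corollary A.2]{Friz} and described as ``a corollary from Garsia--Rademich--Rumsey inequality,'' which is precisely the tool you invoke. Your choices $\Psi(x)=x^q$ and $p(r)=r^{\alpha+1/q}$ are the standard ones, and the exponent bookkeeping is handled correctly.
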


Let $f\in \mathcal{C}([s,t];\mathbb{R})$ be $\beta$-H\"older continuous on $[s,t]\subseteq[0,T]$ with $1/2<\beta<1$ and $g:[s,t]\rightarrow \mathbb{R}$ be a step function defined by  $g_t=g_0\mathbf{1}_{\{0\}}+\sum^{n-1}_{k=0}g^k\mathbf{1}_{(t_k,t_{k+1}]}$ with $s=t_0<t_1<\cdots<t_n=t$. 
The integral of $g$ with respect to $f$ can be defined piecewisely:
\begin{align*}
\int_{s}^{t}g_rdf_r:=\sum_{k=0}^{n-1}g^k(f_{t_{k+1}}-f_{t_k}).
\end{align*}
For any $1/2<\alpha<1$, according to fractional calculus (see e.g. \cite[Section 2]{Zahle98RTRF}), it has the characterization:
\begin{align*}
\int_{s}^{t}g_rdf_r=(-1)^\alpha \int_{s}^{t} D^\alpha_{s+}g_r D^{1-\alpha}_{t-} F_rdr.
\end{align*}
Here $(-1)^\alpha=e^{-i\pi\alpha}$, $F_r:=f_r-f_t$, $D^\alpha_{s+}g_r$ and $D^{1-\alpha}_{t-} F_r$ are fractional Weyl derivatives of the order $\alpha$ and $1-\alpha$ respectively:
\begin{align*}
(D^\alpha_{s+}g)_r:&=\frac{1}{\varGamma(1-\alpha)}\bigg(\frac{g_r}{(r-s)^\alpha}+\alpha
\int_{s}^{r}\frac{g_r-g_u}{(r-u)^{\alpha+1}}du\bigg),\\
(D^{1-\alpha}_{t-}F)_r:&=\frac{(-1)^{1-\alpha}}{\varGamma(\alpha)}\bigg(\frac{F_r}{(t-r)^{1-\alpha}}+(1-\alpha)
\int_{r}^{t}\frac{F_r-F_u}{(u-r)^{2-\alpha}}du\bigg).
\end{align*}

\subsection{A priori estimate for the solution and iterated integrals}

In the sequel, we denote by $\mathcal{C}_b^N(\mathbb{R}^m;\mathbb{R}^{M})$ the space of bounded and $N$-times continuously differentiable functions $V:\mathbb{R}^m \rightarrow \mathbb{R}^{M}$ with bounded derivatives.

The following lemma shows the well-posedness of \eqref{sde}, which means that the solution is continuously dependent on the driving noises in H\"older semi-norm, where almost all sample paths of $X$ are $\beta$-H\"older continuous for any $\beta\in(0,H)$. In the next section, we will show that the numerical schemes we consider could inherit a similar property. 
\begin{lemma}(see e.g. \cite[Theorem 10.14]{Friz})\label{well}
	If $V\in \mathcal{C}^{1}_b(\mathbb{R}^m;\mathbb{R}^{m\times d})$ and $1/2<\beta<H$, then there exists a unique solution of \eqref{sde} satisfying almost surely that
	\begin{align*}
	\|Y\|_{\beta}&\le C(V,\beta,T)\max\bigg\{\|X\|_{\beta},\|X\|^{1/\beta}_{\beta}\bigg\},\\
	\|Y\|_{\infty}&\le |y|+C(V,\beta,T)\max\bigg\{\|X\|_{\beta},\|X\|^{1/\beta}_{\beta}\bigg\},\\
	\end{align*}
	where $\|Y\|_{\infty}:=\sup\big\{|Y_u|,~0\le u\le T\big\}.$
	Moreover, for some $C_0>0$ and $0\le s<t\le T$ such that $\|X\|_\beta|t-s|^\beta\le C_0$, the estimate can be improved to
	\begin{align*}
	\|Y\|_{s,t,\beta}&\le C(V,\beta,T,C_0)\|X\|_{\beta}.\\
	\end{align*}
\end{lemma}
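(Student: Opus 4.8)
The plan is to follow the classical Young-integral fixed-point argument, but to extract the quantitative H\"older bounds carefully. First I would set up the solution map $\mathcal{M}$ on a suitable closed ball of $\mathcal{C}^\beta$: for a candidate path $Z$, define $(\mathcal{M}Z)_t := y + \int_0^t V(Z_r)\,dX_r$, where the integral is the Young integral, which makes sense since $V(Z_\cdot)$ is $\beta$-H\"older (as $V\in\mathcal{C}^1_b$ and $Z\in\mathcal{C}^\beta$) and $X$ is $\beta$-H\"older with $2\beta>1$. The key analytic input is the Young--Love estimate: for $0\le s<t\le T$,
\begin{align*}
\left| \int_s^t V(Z_r)\,dX_r - V(Z_s)(X_t-X_s)\right| \le C_\beta \,\|V(Z_\cdot)\|_{s,t,\beta}\,\|X\|_{s,t,\beta}\,(t-s)^{2\beta},
\end{align*}
together with $\|V(Z_\cdot)\|_{s,t,\beta}\le \|DV\|_\infty\|Z\|_{s,t,\beta}$ and $|V(Z_s)|\le \|V\|_\infty$. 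This gives $\|\mathcal{M}Z\|_{s,t,\beta}\le \|V\|_\infty\|X\|_{s,t,\beta} + C_\beta\|DV\|_\infty\|Z\|_{s,t,\beta}\|X\|_{s,t,\beta}(t-s)^\beta$.

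Next I would run the standard local argument on a short interval. Choose $C_0>0$ small enough (depending only on $\beta$, $\|V\|_{\mathcal{C}^1_b}$) so that whenever $\|X\|_\beta(t-s)^\beta\le C_0$ one has $C_\beta\|DV\|_\infty\|X\|_{s,t,\beta}(t-s)^\beta\le \tfrac12$; then $\mathcal{M}$ maps the ball $\{\|Z\|_{s,t,\beta}\le 2\|V\|_\infty\|X\|_{s,t,\beta}\}$ into itself, and a similar computation on the difference $\mathcal{M}Z-\mathcal{M}\tilde Z$ (using $|V(a)-V(b)-V(a')+V(b')|$-type bounds from $\mathcal{C}^1_b$, here one needs $V\in\mathcal{C}^2_b$ for the contraction in $\mathcal{C}^\beta$, or one argues contraction in sup-norm and closedness, which is the route I would take to stay with $\mathcal{C}^1_b$) yields a unique fixed point. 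This immediately gives the improved local estimate $\|Y\|_{s,t,\beta}\le C(V,\beta,T,C_0)\|X\|_\beta$ on any interval with $\|X\|_\beta(t-s)^\beta\le C_0$.

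Finally I would globalize. Partition $[0,T]$ into $N$ consecutive intervals each of length $\le (C_0/\|X\|_\beta)^{1/\beta}$, so $N\lesssim 1+\|X\|_\beta^{1/\beta}T$. Summing the local sup-norm increments $\|Y\|_{t_i,t_{i+1},\infty}\le |Y_{t_i}| + C\|X\|_\beta\,(t_{i+1}-t_i)^\beta$ over the blocks and using $N(t_{i+1}-t_i)^\beta\lesssim \max\{T,\|X\|_\beta^{(1-\beta)/\beta}\}$ gives $\|Y\|_\infty\le |y|+C(V,\beta,T)\max\{\|X\|_\beta,\|X\|_\beta^{1/\beta}\}$. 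For the global H\"older semi-norm one compares increments over an arbitrary $[u,v]$: if $[u,v]$ sits in one block use the local bound; otherwise split at block endpoints and use that a $\beta$-H\"older estimate on each of boundedly many subintervals plus control of the total oscillation patches together (a standard chaining/interpolation lemma) to produce $\|Y\|_\beta\le C(V,\beta,T)\max\{\|X\|_\beta,\|X\|_\beta^{1/\beta}\}$. The main obstacle is the bookkeeping in this globalization step — tracking how the number of blocks $N\sim\|X\|_\beta^{1/\beta}$ enters and why it produces exactly the exponent $1/\beta$ rather than something worse — and, depending on how much one wants to use only $\mathcal{C}^1_b$ regularity, setting up the fixed point so that uniqueness holds without a full $\mathcal{C}^\beta$-contraction; both are well understood (this is essentially \cite[Theorem 10.14]{Friz}) but require care.
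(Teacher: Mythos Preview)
Your proposal is correct and follows the standard Young-integral route. Note, however, that the paper does not actually prove this lemma: it is quoted from the literature (specifically \cite[Theorem~10.14]{Friz}) and stated without proof, so there is no ``paper's own proof'' to compare against. Your sketch is essentially the argument one finds in that reference---the Young--Love estimate, a local fixed point on intervals where $\|X\|_\beta(t-s)^\beta$ is small, and then globalization by patching over $N\sim \|X\|_\beta^{1/\beta}$ blocks, which is exactly what produces the exponent $1/\beta$. Your remark about handling uniqueness under only $\mathcal{C}^1_b$ via a sup-norm contraction (rather than a $\mathcal{C}^\beta$-contraction requiring $\mathcal{C}^2_b$) is the right way to stay within the stated hypotheses.
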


To get the strong convergence rate of numerical schemes, we recall some results from \cite{CN2017,HuTaylor}. For a numerical scheme, we apply the uniform partition of the interval $[0,T]$ with step size $h=\frac{T}{n}$, $n\in\mathbb{N_+}$ and  denote $t_k=kh$, $k=0,\cdots,n$.

\begin{lemma}(see \cite{CN2017,HuTaylor})\label{multi}
	Let $X^1_t=t$ and $X^2_t,\cdots,X^d_t$ be independent fBms with $H>\frac{1}{2}$. Then for any $n\in\mathbb{N}_+$, it holds for any $0\le t_i<t_j\le T$ and $p\ge 1$ that
	\begin{align}
	\bigg\|\sum_{k=i}^{j-1}\bigg[\int_{t_k}^{t_{k+1}}\int_{t_k}^{s}dX^{3}_udX^2_s
	-\int_{t_k}^{t_{k+1}}\int_{s}^{t_{k+1}}  dX^{3}_udX^2_s\bigg]\bigg\|_{L^p(\Omega)}
	&\le C|t_j-t_i|^{\frac{1}{2}} h^{2H-\frac{1}{2}},\label{levyB}\\
	\bigg\|\sum_{k=i}^{j-1}\bigg[\int_{t_k}^{t_{k+1}}\int_{t_k}^{s}dX^1_udX^2_s
	-\int_{t_k}^{t_{k+1}}\int_{s}^{t_{k+1}}  dX^1_udX^2_s\bigg]\bigg\|_{L^p(\Omega)}
	&\le C|t_j-t_i|^{\frac{1}{2}} h^{H+\frac12}\label{levyt},
	\end{align}
	where $C=C(p)$ above is independent of $n$.
	Moreover, for any $l_1,\cdots,l_{N'}\in\{1,\cdots,d\} $, it holds that 
	\begin{align}
	\bigg\|\sum_{k=i}^{j-1}\int_{t_k}^{t_{k+1}}\int_{t_k}^{u_1}\cdots \int_{t_k}^{u_{N'-1}} dX^{l_{N'}}_{u_{N'}}\cdots dX^{l_2}_{u_2}dX^{l_1}_{u_1}\bigg\|_{L^p(\Omega)}
	\le C|t_j-t_i|^{\frac{1}{2}} h^{r}\label{multiple},
	\end{align}
	where $r=N''H+N'-N''-1$ when $N''=\sharp\{l_i:l_i\neq 1\}$ is even, $r=N''H+N'-N''-H$ when $N''$ is odd, and $C=C(p)$ is independent of $n$.
	
\end{lemma}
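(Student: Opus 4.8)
The plan is to treat all three estimates by the same mechanism. Each random variable appearing in the sums \eqref{levyB}, \eqref{levyt}, \eqref{multiple} belongs to a fixed, finite sum of Wiener chaoses over the Gaussian family generated by $X^2,\dots,X^d$ (an iterated integral with $N''$ non-time differentials sits in $\bigoplus_{q\le N''}$ of the $q$-th chaos), so Nelson's hypercontractivity yields $\|\,\cdot\,\|_{L^p(\Omega)}\le C(p)\|\,\cdot\,\|_{L^2(\Omega)}$ with $C(p)$ independent of $n$. First I would record this reduction, so that it remains only to bound the $L^2(\Omega)$-norm of each sum, i.e.\ to estimate $\mathbb E[|S|^2]$ where $S$ denotes the corresponding sum over the blocks $[t_k,t_{k+1}]$.

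For \eqref{levyt} and \eqref{levyB} the decisive structural observation is a \emph{cancellation within each block}. An integration by parts rewrites the $k$-th summand of \eqref{levyt} as $\int_{t_k}^{t_{k+1}}(2s-t_k-t_{k+1})\,dX^2_s$ and that of \eqref{levyB} as $\int_{t_k}^{t_{k+1}}(2X^3_s-X^3_{t_k}-X^3_{t_{k+1}})\,dX^2_s$; in the first case the deterministic weight has vanishing mean over $[t_k,t_{k+1}]$, and in the second the weight has vanishing expectation and its coefficients $2,-1,-1$ sum to zero, which kills the $s^{2H}$ and $s'^{2H}$ contributions in $\mathbb E[X^3_sX^3_{s'}]$. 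Expanding $\mathbb E[|S|^2]=\sum_{k,k'}\mathbb E[A_kA_{k'}]$ and using independence of $X^2$ and $X^3$, each covariance reduces, via the product formula (only the contraction term survives the expectation), to an integral against the kernel $\alpha_H|s-s'|^{2H-2}$ with $\alpha_H=H(2H-1)$. A change of variables $s=t_k+h\sigma$ shows the diagonal terms are of order $h^{2H+2}$ (resp.\ $h^{4H}$), and the block cancellation forces two extra discrete differences of the kernel in the off-diagonal terms, so that $|\mathbb E[A_kA_{k'}]|\lesssim h^{2H+2}|k-k'|^{2H-4}$ (resp.\ $h^{4H}|k-k'|^{2H-4}$), which is summable in $|k-k'|$ because $2H-4<-1$. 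Hence $\mathbb E[|S|^2]\lesssim \#\{k\}\cdot h^{2H+2}\lesssim |t_j-t_i|\,h^{2H+1}$ (resp.\ $|t_j-t_i|\,h^{4H-1}$), giving \eqref{levyt} (resp.\ \eqref{levyB}).

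For \eqref{multiple} there is in general no cancellation, and I would argue by a direct covariance/scaling estimate. After the reduction to $p=2$, expand $\mathbb E[|S|^2]$ into a double sum over pairs of blocks; on a given pair the contribution is a multiple integral of a product of $N''$ kernels $\alpha_H|u-u'|^{2H-2}$ (for the genuine differentials) and factors of Lebesgue measure (for the $X^1=t$ differentials). Scaling in each block gives a diagonal contribution of order $h^{2(N''H+N'-N'')}$, while the off-diagonal sums are controlled using $\sum_{m\le N}m^{2H-2}\sim N^{2H-1}$; comparing these two contributions, using $N\sim|t_j-t_i|/h$ and $|t_j-t_i|^{\theta}\le T^{\theta-1/2}|t_j-t_i|^{1/2}$ for $\theta\ge\frac12$, and tracking the parity of $N''$ in the covariance bookkeeping yields the exponent $r$ in the statement. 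I expect the main obstacle to be precisely this last point — the sharp off-diagonal covariance estimates: producing the extra discrete-difference decay in \eqref{levyt}, \eqref{levyB} and pinning down the precise power $r$ together with the even/odd dichotomy in \eqref{multiple}, which requires careful handling of iterated discrete differences of $|x|^{2H}$ and of the Wiener-chaos product formula when several differentials are time differentials. This technical bookkeeping is carried out in \cite{CN2017,HuTaylor}, which I would invoke for the details.
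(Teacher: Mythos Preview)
The paper does not prove this lemma at all: it is stated with the attribution ``(see \cite{CN2017,HuTaylor})'' and no argument is given, so there is nothing to compare your proposal against on the paper's side. Your sketch --- hypercontractivity to reduce to $L^2$, then expanding $\mathbb{E}[|S|^2]$ as a double sum and controlling diagonal and off-diagonal block covariances via the kernel $\alpha_H|s-s'|^{2H-2}$, exploiting the zero-average cancellation in \eqref{levyB}--\eqref{levyt} to gain extra decay --- is indeed the standard route taken in those references, and since you yourself defer the sharp bookkeeping to \cite{CN2017,HuTaylor}, your proposal effectively agrees with the paper's treatment of this lemma as a cited result.
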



Lemma \ref{multi} shows estimates for the L\'evy area type processes and multiple integrals of $X$. In particular, if $N'=N''=2$, then $r= 2H-1<2H-\frac{1}{2}$. This implies that the convergence rate of the $2$nd-level iterated integrals of $X$ in the form of \eqref{levyB} is higher than that in the form of \eqref{multiple}. 


\begin{lemma}(\cite[Proposition 8]{HuTaylor})\label{trans}
	Let $f$ be $\beta$-H\"older continuous stochastic process in $L^{2p}(\Omega)$ with $\frac{1}{2}<\beta<H$ and $p\ge 1$, i.e., 
	\begin{align*}
	\sup\bigg\{\frac{\|f_v-f_u\|_{L^{2p}(\Omega)}}{(v-u)^\beta},~0\le u<v\le T\bigg\}<\infty.
	\end{align*}
	If a sequence of stochastic processes $\{g_n\}_{n\in\mathbb{N}_+}$ satisfies $g_n(t_i)=\sum_{k=0}^{i-1}\xi_{n,k}$ and 
	\begin{align*}
	\|g_n(t_j)-g_n(t_i)\|_{L^{2p}(\Omega)}\le C|t_j-t_i|^{\frac{1}{2}},\quad \forall ~0\le t_i<t_j\le T,
	\end{align*}
	then 
	\begin{align*}
	\bigg\|\sum_{k=i}^{j-1}f_{t_k}\xi_{n,k} \bigg\|_{L^{p}(\Omega)}\le C|t_j-t_i|^{\frac{1}{2}},\quad \forall ~0\le t_i<t_j\le T.
	\end{align*}
	Constants $C=C(p,f)$ above are all independent of $n$.
\end{lemma}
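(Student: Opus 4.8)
The statement to be proved is Lemma~\ref{trans}, which is attributed to \cite[Proposition 8]{HuTaylor}. I will sketch how I would prove it directly.

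\medskip

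The plan is to use a \emph{summation-by-parts} (Abel summation) argument to transfer the $\frac12$-H\"older-type control from the discrete process $g_n$ onto the weighted sum $\sum_{k=i}^{j-1} f_{t_k}\xi_{n,k}$. First I would write $\xi_{n,k} = g_n(t_{k+1}) - g_n(t_k)$, so that
\begin{align*}
\sum_{k=i}^{j-1} f_{t_k}\xi_{n,k}
= \sum_{k=i}^{j-1} f_{t_k}\big(g_n(t_{k+1})-g_n(t_k)\big)
= f_{t_{j-1}} g_n(t_j) - f_{t_i} g_n(t_i) - \sum_{k=i}^{j-2}\big(f_{t_{k+1}}-f_{t_k}\big) g_n(t_{k+1}) \, ,
\end{align*}
after a telescoping rearrangement; it is cleaner to center $g_n$ at $t_i$, i.e. replace $g_n(t_k)$ by $\widetilde g_n(t_k):=g_n(t_k)-g_n(t_i)$, which does not change $\xi_{n,k}$ and gives $\|\widetilde g_n(t_k)\|_{L^{2p}}\le C|t_k-t_i|^{1/2}$ directly from the hypothesis. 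Then
\begin{align*}
\sum_{k=i}^{j-1} f_{t_k}\xi_{n,k}
= f_{t_{j-1}}\,\widetilde g_n(t_j) \;-\; \sum_{k=i+1}^{j-1}\big(f_{t_k}-f_{t_{k-1}}\big)\,\widetilde g_n(t_k)\, .
\end{align*}

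\medskip

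Next I would estimate the two pieces in $L^p(\Omega)$ using the Cauchy--Schwarz inequality in $\Omega$ (writing $L^p$ norms of products as products of $L^{2p}$ norms). For the boundary term, $\|f_{t_{j-1}}\widetilde g_n(t_j)\|_{L^p}\le \|f_{t_{j-1}}\|_{L^{2p}}\,\|\widetilde g_n(t_j)\|_{L^{2p}}\le C|t_j-t_i|^{1/2}$, using that $f$ is bounded in $L^{2p}$ uniformly on $[0,T]$ (which follows from $f_0\in L^{2p}$ together with the $\beta$-H\"older bound, or may be assumed as part of the hypothesis on $f$). For the sum, each summand obeys
\begin{align*}
\big\|\big(f_{t_k}-f_{t_{k-1}}\big)\widetilde g_n(t_k)\big\|_{L^p}
\le \big\|f_{t_k}-f_{t_{k-1}}\big\|_{L^{2p}}\,\big\|\widetilde g_n(t_k)\big\|_{L^{2p}}
\le C\, h^{\beta}\,|t_k-t_i|^{1/2}\, ,
\end{align*}
so by the triangle inequality $\big\|\sum_{k=i+1}^{j-1}(\cdots)\big\|_{L^p}\le C h^{\beta}\sum_{k=i+1}^{j-1}|t_k-t_i|^{1/2}\le C h^{\beta-1}\int_{0}^{t_j-t_i}s^{1/2}\,ds \le C h^{\beta-1}|t_j-t_i|^{3/2}$. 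Since $\beta>\frac12$, we have $h^{\beta-1}\le h^{-1/2}$ and $|t_j-t_i|\le T$, giving $h^{\beta-1}|t_j-t_i|^{3/2}\le C|t_j-t_i|^{1/2}$ after absorbing $T$ and using $h\le |t_j-t_i|$ is \emph{not} needed — the crude bound $h^{\beta-1}|t_j-t_i| \le T^{\beta-1+1}\cdot$const already suffices once one factor $|t_j-t_i|^{1/2}$ is kept. Combining the two estimates yields the claimed bound $C|t_j-t_i|^{1/2}$ with $C$ independent of $n$.

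\medskip

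The main subtlety — and the step I would be most careful about — is getting the power of $h$ right in the telescoped sum so that the final bound is genuinely independent of $n$ and retains exactly the factor $|t_j-t_i|^{1/2}$ rather than a worse power. The naive triangle-inequality bound on $\sum f_{t_k}\xi_{n,k}$ would only give $\sum \|f_{t_k}\|_{L^{2p}}\|\xi_{n,k}\|_{L^{2p}}$, and $\|\xi_{n,k}\|_{L^{2p}}=\|\widetilde g_n(t_{k+1})-\widetilde g_n(t_k)\|_{L^{2p}}$ is only controlled like $h^{1/2}$ (not better, since $g_n$ need not be H\"older with the \emph{right} exponent on single steps), producing $(j-i)h^{1/2}\sim h^{-1/2}|t_j-t_i|$, which is \emph{not} summable to the right order. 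The Abel summation is precisely what converts the unfavorable $h^{1/2}$ per-increment bound on $g_n$ into a favorable $h^{\beta}$ per-increment bound on $f$, exploiting the extra regularity of $f$ over the $\frac12$-regularity of $g_n$; checking that $\beta>\frac12$ is exactly the margin that makes $h^{\beta-1}\cdot\big(\text{length}\big)$ controllable is the crux. One should also verify the endpoint index bookkeeping (the cases $j=i+1$, and the precise range of the inner sum) so that no term is dropped or double-counted.
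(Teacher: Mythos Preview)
The paper does not supply its own proof of this lemma; it is quoted from \cite[Proposition~8]{HuTaylor}. Your Abel-summation approach is a natural start, but the final estimate contains a genuine gap.

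After summation by parts you bound the telescoped sum by
\[
\Big\|\sum_{k=i+1}^{j-1}(f_{t_k}-f_{t_{k-1}})\,\widetilde g_n(t_k)\Big\|_{L^p}
\le C\,h^{\beta}\sum_{k=i+1}^{j-1}|t_k-t_i|^{1/2}
\le C\,h^{\beta-1}\,|t_j-t_i|^{3/2}.
\]
Since $\beta<1$, the factor $h^{\beta-1}$ \emph{diverges} as $h\to 0$; taking $t_i=0$, $t_j=T$ gives $h^{\beta-1}T^{3/2}=T^{\beta+1/2}n^{1-\beta}\to\infty$. Your claimed inequality $h^{\beta-1}|t_j-t_i|\le CT^{\beta}$ is therefore false, and the bound cannot be closed this way. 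The observation ``$\beta>\tfrac12$ is the margin'' is correct in spirit, but the relevant exponent is $\beta+\tfrac12>1$, not $\beta-1$, and it must be exploited \emph{multiplicatively over dyadic scales} rather than additively term by term.

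A repair that works: set $A_{i,j}:=f_{t_i}\big(g_n(t_j)-g_n(t_i)\big)$. For $i<m<j$,
\[
A_{i,j}-A_{i,m}-A_{m,j}=(f_{t_i}-f_{t_m})\big(g_n(t_j)-g_n(t_m)\big),
\]
so by Cauchy--Schwarz in $\Omega$, $\|A_{i,j}-A_{i,m}-A_{m,j}\|_{L^p}\le C|t_j-t_i|^{\beta+1/2}$. Because $\beta+\tfrac12>1$, the standard Young/sewing argument (successively removing midpoints of a dyadic partition and summing a geometric series) yields
\[
\Big\|\sum_{k=i}^{j-1}f_{t_k}\xi_{n,k}-A_{i,j}\Big\|_{L^p}\le C|t_j-t_i|^{\beta+1/2},
\]
uniformly in $n$. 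Combining with $\|A_{i,j}\|_{L^p}\le C|t_j-t_i|^{1/2}$ gives the claim. Your Abel summation is essentially the first step of this dyadic recursion; the gap is that a single pass with the triangle inequality throws away the super-additivity that makes the argument converge.
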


\section{Solvability and dependence on driving noises}\label{sec3}
\subsection{Runge--Kutta methods}
For $n\in\mathbb{N_+}$, denote the time step $h=\frac{T}{n}$ and $t_k=kh$, $k=0,\cdots,n$. We consider an $\mathbf{s}$-stage Runge--Kutta method of \eqref{sde}:
\begin{align}
Y^n_{t_{k+1},i}&=Y^n_{t_{k}}+\sum_{j=1}^{\mathbf{s}}a_{ij}V(Y^n_{t_{k+1},j})\Delta X_k,\label{middle}\\
Y^n_{t_{k+1}}&=Y^n_{t_{k}}+\sum_{i=1}^{\mathbf{s}}b_iV(Y^n_{t_{k+1},i})\Delta X_k,\label{scheme}
\end{align}
with $i,j=1,\cdots,\mathbf{s}$, $k=0,\cdots,n-1$, $\Delta X_k=X_{t_{k+1}}-X_{t_k}\in \mathbb{R}^d$ and $Y^n_{t_0}=y\in \mathbb{R}^m$. Here $Y^n_{t_{k+1},i}$, $i=1,\cdots,\mathbf{s}$, are called stage values of this Runge--Kutta method.

If the method is an implicit one, such as the midpoint scheme,
the solvability of \eqref{middle}-\eqref{scheme} should be taken into consideration. For SDEs driven by standard Brownian motions, the classical technique is to truncate each increment of Brownian motions to make increments become bounded, and give the solvability of implicit methods and convergence rates in mean square sense. However, this truncation technique is not suitable for fBms since their increments have nontrivial covariance. Based on Brouwer's theorem, Proposition \ref{sol} ensures the solvability of implicit Runge--Kutta methods in almost surely sense. 

\begin{proposition}\label{sol}
	If $V\in \mathcal{C}^{0}_b(\mathbb{R}^m;\mathbb{R}^{m\times d})$, then for arbitrary time step $h>0$, initial value $y$ and coefficients $\{a_{ij},b_{i}:i,j=1,\cdots,\mathbf{s}\}$, the $\mathbf{s}$-stage Runge--Kutta method \eqref{middle}-\eqref{scheme} has at least one solution for almost every $\omega$.
\end{proposition}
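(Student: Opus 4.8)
The plan is to apply Brouwer's fixed point theorem separately on each subinterval $[t_k,t_{k+1}]$, solving for the stage values $Y^n_{t_{k+1},1},\dots,Y^n_{t_{k+1},\mathbf{s}}$ as a fixed point in $(\mathbb{R}^m)^{\mathbf{s}}$, and then propagating forward step by step. First I would fix $k$ and assume $Y^n_{t_k}$ is already determined (for $k=0$ it is the initial value $y$). Define the map $F:(\mathbb{R}^m)^{\mathbf{s}}\to(\mathbb{R}^m)^{\mathbf{s}}$ by
\begin{align*}
F(z_1,\dots,z_{\mathbf{s}})_i:=Y^n_{t_k}+\sum_{j=1}^{\mathbf{s}}a_{ij}V(z_j)\Delta X_k,\qquad i=1,\dots,\mathbf{s},
\end{align*}
so that a solution of \eqref{middle} at step $k$ is exactly a fixed point of $F$. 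Since $V\in\mathcal{C}^0_b$, there is a constant $M$ with $|V(x)|\le M$ for all $x$, hence for every $i$,
\begin{align*}
|F(z_1,\dots,z_{\mathbf{s}})_i|\le |Y^n_{t_k}|+M\Big(\sum_{j=1}^{\mathbf{s}}|a_{ij}|\Big)|\Delta X_k|=:R_i,
\end{align*}
which does not depend on $z$. Thus $F$ maps the closed ball (or closed polydisc) of radius $R:=\max_i R_i$ centred at the origin into itself, and this ball is a nonempty compact convex set. Continuity of $F$ is immediate from continuity of $V$. Brouwer's theorem then yields a fixed point, giving stage values satisfying \eqref{middle}; plugging them into \eqref{scheme} determines $Y^n_{t_{k+1}}$.

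Next I would turn this into a statement about the full trajectory: since each step only requires $Y^n_{t_k}$ to be defined, an induction on $k$ from $0$ to $n-1$ produces a full solution $\{Y^n_{t_k}\}_{k=0}^n$ together with all stage values. The only probabilistic ingredient is that $\Delta X_k(\omega)$ is finite for every $k$, which holds for almost every $\omega$ because sample paths of $X$ are continuous; on that full-measure set the above deterministic argument applies verbatim, with $R$ depending on $\omega$ through $|\Delta X_k(\omega)|$. This gives the "almost surely" conclusion. One small point worth flagging: Brouwer gives existence but not uniqueness, which is exactly why the statement says "at least one solution"; no contraction estimate is needed or available under the mere boundedness hypothesis $V\in\mathcal{C}^0_b$.

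I do not expect any serious obstacle here; the argument is essentially a packaging of Brouwer's theorem. The only mild subtlety is bookkeeping the dependence on $h$, $y$, and the coefficients $a_{ij}$ — one must be careful that the a priori bound $R$ is genuinely independent of the argument $z$ (it is, precisely because the nonlinearity enters only through the bounded term $V(z_j)$ and not, say, multiplied by $z_j$ itself), so that the self-map property of $F$ on the ball of radius $R$ is legitimate. If one instead wanted to invoke a more classical version of Brouwer stated for the standard simplex or cube, a trivial affine rescaling identifies the ball of radius $R$ with such a set, so nothing is lost. Measurability of the solution in $\omega$ is not asserted in the proposition and so need not be addressed, but if desired it could be obtained by a measurable selection theorem applied step by step.
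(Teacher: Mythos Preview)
Your proof is correct and rests on the same idea as the paper's: reduce the implicit stage equations at each step to a finite-dimensional existence problem and invoke Brouwer. The packaging differs slightly. You apply Brouwer directly to the map $F$, using boundedness of $V$ to show $F$ sends a closed ball into itself. The paper instead sets $\phi(Z)=Z-F(Z)$, verifies $Z^\top\phi(Z)>0$ on a sufficiently large sphere, and then argues by contradiction, applying Brouwer to the auxiliary map $\psi(Z)=-R\phi(Z)/|\phi(Z)|$. Your direct fixed-point formulation is the more economical one here, since boundedness of $V$ makes the self-mapping property immediate; the paper's acute-angle argument is the device one would reach for if only a coercivity-type estimate (rather than an outright bound) were available.
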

\begin{proof}
	Fix $h>0$ and $Y^n_{t_{k}}\in\mathbb{R}^{2m} $. 
	
	Let $Z_{1},\cdots,Z_{\mathbf{s}}\in\mathbb{R}^{2m}$ and $Z=(Z_{1}^\top,\cdots,Z_{\mathbf{s}}^\top)^\top\in\mathbb{R}^{2m\mathbf{s}}$. We define a map $\phi:\mathbb{R}^{2m\mathbf{s}}\rightarrow \mathbb{R}^{2m\mathbf{s}}$ with
	\begin{align*}
	\begin{split}
	&\phi(Z)=(\phi(Z)_{1}^\top,\cdots,\phi(Z)_{\mathbf{s}}^\top)^\top,\\ &\phi(Z)_{i}=Z_{i}-Y^n_{t_{k}}-\sum^{s}_{j=1}a_{ij}V(Z_{j})\Delta X_k(\omega),\ i=1,\cdots,\mathbf{s}.
	\end{split}
	\end{align*}
	It then suffices to prove that $\phi(Z)=0$ has at least one solution, which implies the solvability of \eqref{middle} and thus the solvability of the Runge--Kutta method.
	Let $c=\max\{|a_{ij}|:i,j=1,\cdots,\mathbf{s}\}$,  $\nu=\sup_{y\in\mathbb{R}^m}|V(y)|$ and 
	$$R=\sqrt{\mathbf{s}}|Y^n_{t_{k}}|+\mathbf{s}\sqrt{\mathbf{s}}c\nu|\Delta X_k(\omega)|+1,$$ we have that for any $|Z|= R$,
	\begin{align*}
	Z^\top\phi(Z)&=\sum^{\mathbf{s}}_{i=1}Z_{i}^\top\left(Z_{i}-Y^n_{t_{k}}-\sum^{\mathbf{s}}_{j=1}a_{ij}V(Z_{j})\Delta X_k(\omega)\right)\\
	&\geq|Z|\left(|Z|-\sqrt{\mathbf{s}}|Y^n_{t_{k}}|-\mathbf{s}\sqrt{\mathbf{s}}c\nu|\Delta X_k(\omega)|\right)>0.
	\end{align*}
	
	We aim to show that $\phi(Z)=0$ has a solution in the ball $B_{R}:=\{Z:|Z|\leq R\}$.
	Assume by contradiction that $\phi(Z)\neq 0$ for any $|Z|\leq R$. We define a continuous map $\psi$ by $\psi(Z)=-\frac{R\phi(Z)}{|\phi(Z)|}$. Since $\psi: B_{R}\rightarrow B_{R}$, $\psi$ has at least one fixed point $Z^{*}$ such that $Z^{*}=\psi(Z^{*})$ and $|Z^{*}|=R$. This leads to a contradiction since $|Z^{*}|^{2}=\psi(Z^{*})^\top Z^{*}=-\frac{R\phi(Z^{*})^\top Z^{*}}{|\phi(Z^{*})|}<0$. Therefore, $\phi$ has at least one solution.
\end{proof}

We construct the continuous version \eqref{middlec}-\eqref{schemec} for the Runge--Kutta method, taking advantages of the stage values $Y^n_{t_k,i}$. Indeed, the continuous version of $Y^n_{t_k,i}$ comes after that of $Y^n_{t_k}$. For $t\in(t_k,t_{k+1}]$, 
$\lceil t \rceil ^n:=t_{k+1}$. In particular, $t=t_k$ if and only if $t=\lceil t \rceil ^n$ for some $k=0,\cdots,n$. The continuous version reads
\begin{align}
Y^n_{t,i}&:=Y^n_{(t-h)\vee 0}
+\sum_{j=1}^{\mathbf{s}}\int_{(t-h)\vee 0}^{t}a_{ij}V(Y^n_{\lceil s \rceil ^n,j})dX_s,\quad  i=1,\cdots,\mathbf{s},\label{middlec}\\
Y^n_t&:=y+
\sum_{i=1}^{\mathbf{s}}\int_{0}^{t}b_iV(Y^n_{\lceil s \rceil ^n,i})dX_s,\label{schemec}
\end{align}
where $s\vee t$ denotes the maximum of $s$ and $t$.

To estimate the H\"older semi-norm of $Y^n_\cdot$ and $Y^n_{\cdot,i}$, we first introduce the discrete H\"older semi-norm for $f\in\mathcal{C}([0,T],\mathbb{R}^d)$:

\begin{align*}
\|f\|_{s,t,\beta,n}:=&\sup\bigg\{\frac{|f_v-f_u|}{(v-u)^\beta},~s\le u<v\le t,~u=\lceil u \rceil ^n,~v=\lceil v \rceil ^n\bigg\},\\
\|f\|_{\beta,n}:=&\sup\bigg\{\frac{|f_v-f_u|}{(v-u)^\beta},~0\le u<v\le T,~u=\lceil u \rceil ^n,v=\lceil v \rceil ^n\bigg\}.
\end{align*}

\begin{lemma}
	lf $V\in \mathcal{C}^{0}_b(\mathbb{R}^m;\mathbb{R}^{m\times d})$, then for any $n\in\mathbb{N_+}$ and $1/2<\beta<H$, $\|Y^{n}_\cdot\|_{\beta,n}$ and $\|Y^{n}_{\cdot,i}\|_{\beta,n}$, $i=1,\cdots,\mathbf{s}$, are all finite almost surely. More precisely, 
	\begin{align*}
	\|Y^{n}_\cdot\|_{\beta,n}&\le C(d,m,n,c,\nu,\mathbf{s})
	\|X\|_\beta<\infty,\quad a.s.,\\
	\|Y^{n}_{\cdot,i}\|_{\beta,n}&
	\le C(d,m,n,c,\nu,\mathbf{s})
	\|X\|_\beta<\infty,\quad a.s.,
	\end{align*}
	with $c=\max\{|a_{ij}|,|b_i|:i,j=1,\cdots,\mathbf{s}\}$ and $\nu=\sup_{y\in \mathbb{R}^m}|V(y)|$.
\end{lemma}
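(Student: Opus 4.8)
The plan is to bound the discrete H\"older semi-norms of $Y^n_{\cdot,i}$ and $Y^n_\cdot$ by a bootstrap-type argument, very much in the spirit of the proof of Lemma~\ref{well}. First I would fix $n$ and work on a single subinterval $[t_k,t_{k+1}]$. Since $V$ is bounded by $\nu$ and $g(s):=V(Y^n_{\lceil s\rceil^n,j})$ is a step function on this subinterval (it equals the constant $V(Y^n_{t_{k+1},j})$), the Young integral reduces to $V(Y^n_{t_{k+1},j})\,\Delta X_k$, so from \eqref{middlec}--\eqref{schemec} we immediately get the one-step bounds
\begin{align*}
\big|Y^n_{t_{k+1},i}-Y^n_{t_k}\big|\le \mathbf{s}\,c\,\nu\,|\Delta X_k|\le \mathbf{s}\,c\,\nu\,\|X\|_\beta h^\beta,\qquad
\big|Y^n_{t_{k+1}}-Y^n_{t_k}\big|\le \mathbf{s}\,c\,\nu\,\|X\|_\beta h^\beta.
\end{align*}

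Next I would estimate $|Y^n_{t_j}-Y^n_{t_i}|$ for general grid points $t_i<t_j$. Writing this increment as $\sum_{i=1}^{\mathbf s}\int_{t_i}^{t_j} b_i V(Y^n_{\lceil s\rceil^n,i})\,dX_s$, I would apply the integral characterization of the Young integral via fractional Weyl derivatives recalled in Section~\ref{sec2}, together with the fact that $g(s)=V(Y^n_{\lceil s\rceil^n,i})$ is bounded by $\nu$ and its increments are controlled by $\mathbf{s}c\nu\|X\|_\beta\,(\cdot)^\beta$ on $[t_i,t_j]$ thanks to the one-step bounds already obtained. This yields a bound of the form $|Y^n_{t_j}-Y^n_{t_i}|\le C(d,m,c,\nu,\mathbf{s})\,\|X\|_\beta\,(t_j-t_i)^\beta$ on any interval; strictly speaking the Weyl-derivative estimate produces a power $(t_j-t_i)^{2\beta}$ correction term from the H\"older part of $g$, so to close the argument cleanly on arbitrarily long intervals one either iterates over short subintervals of fixed size (where $\|X\|_\beta|t-s|^\beta$ is small, exactly as in the last display of Lemma~\ref{well}) and then concatenates, or simply absorbs the extra factor into a constant depending on $T$ and $n$ — which is allowed since the statement permits $n$-dependence. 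The same argument applied to \eqref{middlec} gives the bound for $\|Y^n_{\cdot,i}\|_{\beta,n}$ once $\|Y^n_\cdot\|_{\beta,n}$ is controlled, because $Y^n_{t,i}$ differs from $Y^n_{(t-h)\vee 0}$ by one integration step.

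The main obstacle is that the integrand in \eqref{schemec} depends on the values $Y^n_{\lceil s\rceil^n,i}$, which themselves are only defined through the scheme, so the H\"older bound for $Y^n_\cdot$ and the H\"older bound for the integrand are entangled. I would resolve this by the standard two-layer induction: first establish the crude \emph{one-step} bounds above (which need no a priori H\"older control, only boundedness of $V$), then feed these into the Weyl-derivative estimate to get a \emph{global} discrete H\"older bound, possibly at the cost of a constant growing with the number of steps $n$. Since the statement explicitly allows the constant to depend on $n$, this is harmless here; the sharper, $n$-uniform estimates are what Section~\ref{sec3} goes on to develop afterwards. The remaining details — verifying measurability so that the estimates hold almost surely, and checking that the sup over grid pairs $u=\lceil u\rceil^n$, $v=\lceil v\rceil^n$ is indeed finite — follow immediately once the pointwise increment bound is in hand.
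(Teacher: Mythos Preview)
The paper states this lemma without proof, treating it as an elementary preliminary; the intended argument is the direct one that your own fallback remark already identifies. Since the constant is allowed to depend on $n$, the one-step bounds you establish are all that is needed: telescoping gives
\[
|Y^n_{t_j}-Y^n_{t_i}|\le \sum_{k=i}^{j-1}|Y^n_{t_{k+1}}-Y^n_{t_k}|\le (j-i)\,\mathbf{s}\,c\,\nu\,\|X\|_\beta\,h^\beta,
\]
and since $(t_j-t_i)^\beta=(j-i)^\beta h^\beta$, dividing yields $\|Y^n_\cdot\|_{\beta,n}\le \mathbf{s}\,c\,\nu\,n^{1-\beta}\|X\|_\beta$. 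The stage-value bound then follows from $Y^n_{t_q,i}-Y^n_{t_p,i}=(Y^n_{t_{q-1}}-Y^n_{t_{p-1}})+\sum_j a_{ij}\big[V(Y^n_{t_q,j})\Delta X_{q-1}-V(Y^n_{t_p,j})\Delta X_{p-1}\big]$ and $h\le t_q-t_p$. This is presumably what the paper has in mind, and it is what the later $n$-uniform Proposition~\ref{varRK} is meant to sharpen.

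Your primary route via the Weyl-derivative estimate has a genuine gap under the stated hypothesis $V\in\mathcal{C}^0_b$. To bound $D^\alpha_{s+}g$ for $g(r)=V(Y^n_{\lceil r\rceil^n,i})$ you need control on $|g(r)-g(u)|$, and you claim this is $\lesssim \mathbf{s}c\nu\|X\|_\beta(\cdot)^\beta$ ``thanks to the one-step bounds already obtained.'' But the one-step bounds control increments of $Y^n$, not of $V(Y^n)$; passing from one to the other requires $V$ to be Lipschitz, which $\mathcal{C}^0_b$ does not give. (This is exactly why the subsequent Lemma~\ref{contro1} and Proposition~\ref{varRK} assume $V\in\mathcal{C}^1_b$.) So the Weyl-derivative argument cannot be carried out here as written. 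Drop it and promote your ``absorb into a constant depending on $n$'' remark to the main argument --- it is both correct and what the paper intends.
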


Inspired by \cite{HuEuler}, we give the following two lemmas as a priori estimates for the continuous version \eqref{middlec}-\eqref{schemec}. Based on them, Proposition \ref{varRK} shows that the stage values $Y^{n}_{\cdot,i}$ is coutinuously dependent with respect to the driving noises in H\"older semi-norm and so is $Y^{n}_{\cdot}$.

\begin{lemma}\label{fcalculus}
	Let $\alpha$, $\beta$ and $\beta'$ satisfy $\beta'>\alpha>1-\beta$. Then for any $s,t\in[0,T]$ such that $s<t$ and $s=\lceil s \rceil ^n$, there exists a constant $C=C(\alpha,\beta,\beta',T)$ such that 
	\begin{align*}
	\int_{s}^{t}(t-r)^{\alpha+\beta-1}\int_{s}^{r}\frac{(\lceil r \rceil ^n-\lceil u \rceil ^n)^{\beta'}}{(r-u)^{\alpha+1}}dudr\leq C (t-s)^{\beta+\beta'}.
	\end{align*}
\end{lemma}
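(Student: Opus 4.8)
The plan is to prove the bound by splitting the inner region $u\in[s,r]$ according to whether $u$ lies in the same partition cell as $r$ or in an earlier one, and to control the numerator $(\lceil r\rceil^n-\lceil u\rceil^n)^{\beta'}$ by the distance $r-u$ up to the mesh scale $h=T/n$. The point is that $\lceil r\rceil^n-\lceil u\rceil^n \le r-u + h$ always, but more importantly $\lceil r\rceil^n-\lceil u\rceil^n\le r-u+h$ and this is harmless because when the ceilings differ we automatically have $r-u$ comparable to at least one mesh step; in the regime $\beta'>\alpha>1-\beta$ all the resulting integrals converge. Concretely, I would first observe $0\le \lceil r\rceil^n-\lceil u\rceil^n\le (r-u)+h$, hence $(\lceil r\rceil^n-\lceil u\rceil^n)^{\beta'}\le C\big((r-u)^{\beta'}+h^{\beta'}\big)$, and note that the second term only contributes when $\lceil u\rceil^n<\lceil r\rceil^n$, i.e.\ when $r-u$ is at least of order the gap between consecutive grid points containing $u$ and $r$.

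The first main step is the ``diagonal'' estimate coming from the $(r-u)^{\beta'}$ term: since $\beta'>\alpha$, the inner integral $\int_s^r (r-u)^{\beta'-\alpha-1}\,du\le C(r-s)^{\beta'-\alpha}$ is finite, and then $\int_s^t (t-r)^{\alpha+\beta-1}(r-s)^{\beta'-\alpha}\,dr$ is a Beta-type integral which, because $\alpha+\beta-1>0$ and $\beta'-\alpha>-1$, evaluates to $C(t-s)^{\beta+\beta'}$ by the standard identity $\int_s^t(t-r)^{a-1}(r-s)^{b-1}dr = B(a,b)(t-s)^{a+b-1}$ with $a=\alpha+\beta$, $b=\beta'-\alpha+1$. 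The second main step handles the term with $h^{\beta'}$. Here one writes, for $u$ in the cell with $\lceil u\rceil^n = t_j < \lceil r\rceil^n$, that $r-u \ge \lceil r\rceil^n - \lceil u\rceil^n - h \ge$ (a positive multiple of) $h$ whenever the cells are not adjacent, and separately treats adjacent cells; summing over cells and using $\beta'>\alpha$ again (so that $\sum$ of the form $\sum_{j}(\text{gap})^{\beta'-\alpha-1}h$ behaves like an integral) one recovers $h^{\beta'}\cdot C(r-s)^{\beta'-\alpha}/h^{\beta'}$-type cancellations; more cleanly, one bounds $h^{\beta'}\le (\lceil r\rceil^n-\lceil u\rceil^n)^{\beta'}\le C(r-u+h)^{\beta'}$ is circular, so instead I would directly estimate $h^{\beta'}\int_{\{u:\lceil u\rceil^n<\lceil r\rceil^n\}} (r-u)^{-\alpha-1}du$. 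On that set $r-u\ge r-\lceil u\rceil^n \ge$ dist to the previous grid point; the integral $\int (r-u)^{-\alpha-1}$ over $u\le r-ch$-ish is $\le C h^{-\alpha}$, giving $h^{\beta'-\alpha}$ for the inner integral, and then $\int_s^t (t-r)^{\alpha+\beta-1}h^{\beta'-\alpha}dr \le C h^{\beta'-\alpha}(t-s)^{\alpha+\beta}$; finally $h^{\beta'-\alpha}(t-s)^{\alpha+\beta} = h^{\beta'-\alpha}(t-s)^{-(\beta'-\alpha)}(t-s)^{\beta+\beta'}\le C(t-s)^{\beta+\beta'}$ provided $t-s\ge h$, while the case $t-s<h$ (so $s$ and $t$ lie in at most one cell apart) forces $\lceil r\rceil^n=\lceil u\rceil^n=s$ on the whole domain, killing the numerator entirely.

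The hard part will be the bookkeeping near the diagonal when $r$ and $u$ sit in adjacent cells, where neither ``$r-u$ is large compared to $h$'' nor ``the ceilings coincide'' is available; there $(\lceil r\rceil^n-\lceil u\rceil^n)^{\beta'}$ can be as large as $h^{\beta'}$ while $r-u$ is tiny, so the integrand $h^{\beta'}(r-u)^{-\alpha-1}$ is genuinely singular. The resolution is that the $r$-integration tames it: for fixed $r$, integrating $(r-u)^{-\alpha-1}$ over $u\in[\lceil r\rceil^n-h,\,r]$ gives something of order $(r-\lfloor r\rfloor)^{-\alpha}$ which is not integrable in $r$ by itself, but one never actually integrates to the endpoint because on the adjacent-cell set $u<\lceil r\rceil^n - h \le r$, so $r-u > r-\lceil r\rceil^n+h\ge 0$ and in fact $r-u$ is bounded below by the distance from $r$ to the cell boundary, which after the $r$-integration contributes a convergent factor since $\alpha<1$. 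I would therefore organize the adjacent-cell contribution as a separate lemma-internal claim, bound it by $C\int_s^t (t-r)^{\alpha+\beta-1}\big[\int_{t_{k(r)}-h}^{t_{k(r)}}(r-u)^{-\alpha-1}du\big]h^{\beta'}dr$ where $t_{k(r)}=\lceil r\rceil^n$, evaluate the inner integral as $\frac1\alpha\big[(r-t_{k(r)}+h)^{-\alpha}-(r-t_{k(r)})^{-\alpha}\big]$... actually using $r<t_{k(r)}$ this is $\frac1\alpha\big[(t_{k(r)}-h-r)^{-\alpha}$ is negative distance$\big]$ — here I must be careful with signs, but the upshot is that $\int_{\text{one cell}}(r-u)^{-\alpha-1}du \le C h^{-\alpha}$ uniformly since the cell has length $h$ and is at distance $\ge 0$ from $r$ with $r$ at its right edge, wait $r\in(t_{k(r)-1},t_{k(r)}]$ so $r$ is \emph{in} the cell $(t_{k(r)}-h, t_{k(r)}]$; thus $u$ ranging over that same cell gives $r-u\in[0,h)$ and $\int_0^h \tau^{-\alpha-1}d\tau=\infty$ — which is exactly why only \emph{distinct} cells contribute, i.e.\ $u$ must be in $(t_{k(r)}-2h, t_{k(r)}-h]$ at the closest, whence $r-u\ge r-(t_{k(r)}-h)>0$ with the difference bounded below by the distance from $r$ to $t_{k(r)}-h$; integrating $(r-u)^{-\alpha-1}$ over that cell gives $\le C\cdot h\cdot (\text{dist}(r, t_{k(r)}-h))^{-\alpha-1}$ which after integrating against $(t-r)^{\alpha+\beta-1}$ over $r$ near $t_{k(r)}-h$ converges because $\alpha+1 - 1 = \alpha < 1$... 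I would make this rigorous by substituting and reducing to $\int_0^h \tau^{-\alpha}\,d\tau<\infty$ after the $r$-integration, which holds precisely under $\alpha<1$, i.e.\ $\alpha>1-\beta$ with $\beta<1$. Assembling the three pieces — diagonal, well-separated cells, adjacent cells — and taking $C$ to be the max of the three constants completes the proof; I expect the adjacent-cell analysis to be the only step requiring genuine care, everything else being Beta-integral estimates under the stated exponent inequalities $\beta'>\alpha>1-\beta$.
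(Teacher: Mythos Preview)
Your decomposition into same-cell (numerator vanishes), well-separated cells (where $\lceil r\rceil^n-\lceil u\rceil^n\le 2(r-u)$, leading to a Beta integral), and the single adjacent cell is exactly the paper's split into $I_1$ and $I_2$, and your Beta-integral step for the well-separated piece is correct. So the overall strategy is right.

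The gap is in your adjacent-cell estimate. You bound
\[
\int_{t_{k(r)}-2h}^{t_{k(r)}-h}(r-u)^{-\alpha-1}\,du \;\le\; C\,h\cdot\big(r-(t_{k(r)}-h)\big)^{-\alpha-1},
\]
which is valid, but then integrating in $r$ over one cell gives $h\int_0^h\tau^{-\alpha-1}d\tau=\infty$ for every $\alpha>0$, so this crude sup-bound cannot close the argument. What is needed is the explicit antiderivative
\[
\int_{t_{k(r)}-2h}^{t_{k(r)}-h}(r-u)^{-\alpha-1}\,du \;\le\; \tfrac{1}{\alpha}\big(r-t_{k(r)}+h\big)^{-\alpha},
\]
which loses one power and yields the convergent $\int_0^h\tau^{-\alpha}d\tau$ you correctly name as the target. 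The paper does exactly this: it bounds the numerator on the adjacent cell by $(2h)^{\beta'}$, evaluates the inner integral as above, pulls out $(t-r)^{\alpha+\beta-1}\le(t-s)^{\alpha+\beta-1}$, and sums over the $\sim (t-s)/h$ cells to get $C\,h^{\beta'}(t-s)^{\alpha+\beta-1}\cdot\frac{t-s}{h}\cdot h^{1-\alpha}=C\,h^{\beta'-\alpha}(t-s)^{\alpha+\beta}\le C(t-s)^{\beta+\beta'}$ (using $h\le t-s$ on the region where the integrand is nonzero). One more point: your sentence ``$\alpha<1$, i.e.\ $\alpha>1-\beta$ with $\beta<1$'' is a non sequitur---$\alpha>1-\beta$ with $\beta<1$ gives $\alpha>0$, not $\alpha<1$. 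The condition $\alpha<1$ is genuinely required for the adjacent-cell $r$-integral and is implicit here (in the application one chooses $\alpha<\beta'<1$).
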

\begin{proof}
	Suppose $T=1$ without loss of generality. By the definition of $\lceil \cdot \rceil ^n$, we have
	\begin{align*}
	&\int_{s}^{t}(t-r)^{\alpha+\beta-1}\int_{s}^{r}\frac{\big(\lceil r \rceil ^n-\lceil u \rceil ^n\big)^{\beta'}}{(r-u)^{\alpha+1}}dudr\\
	=&\int_{\lceil s \rceil ^n+1/n}^{t}(t-r)^{\alpha+\beta-1}\int_{\lceil s \rceil ^n}^{\lceil r \rceil ^n-\frac{1}{n}}\frac{\big(\lceil r \rceil ^n-\lceil u \rceil ^n\big)^{\beta'}}{(r-u)^{\alpha+1}}dudr\\
	=&\int_{\lceil s \rceil ^n+1/n}^{t}(t-r)^{\alpha+\beta-1}
	\bigg(\int_{\lceil s \rceil ^n}^{\lceil r \rceil ^n-\frac{2}{n}}
	+\int_{\lceil r \rceil ^n-\frac{2}{n}}^{\lceil r \rceil ^n-\frac{1}{n}}\bigg)
	\frac{\big(\lceil r \rceil ^n-\lceil u \rceil ^n\big)^{\beta'}}{(r-u)^{\alpha+1}}dudr\\
	=&:I_1+I_2.
	\end{align*}
	For the first term, since $r-u>\frac{1}{n}$ and $\lceil r \rceil ^n-\lceil u \rceil ^n<r-u+\frac1n$, we have
	\begin{align*}
	I_1&=\int_{\lceil s \rceil ^n+1/n}^{t}(t-r)^{\alpha+\beta-1}\int_{\lceil s \rceil ^n}^{\lceil r \rceil ^n-\frac{2}{n}}\frac{\big(\lceil r \rceil ^n-\lceil u \rceil ^n\big)^{\beta'}}{(r-u)^{\alpha+1}}dudr\\
	&\leq \int_{\lceil s \rceil ^n+1/n}^{t}(t-r)^{\alpha+\beta-1}\int_{\lceil s \rceil ^n}^{\lceil r \rceil ^n-\frac{2}{n}}\frac{2^{\beta'}(r-u)^{\beta'}}{(r-u)^{\alpha+1}}dudr\\
	&\leq C \int_{\lceil s \rceil ^n+1/n}^{t}(t-r)^{\alpha+\beta-1}(r-s)^{\beta'-\alpha}dr\\
	&\leq C\int_{\lceil s \rceil ^n+1/n}^{t}(t-r)^{\alpha+\beta-1}(t-s)^{\beta'-\alpha}dr\\
	&\leq C (t-s)^{\beta+\beta'}.
	\end{align*}
	For the second term, 
	\begin{align*}
	I_2&=\int_{\lceil s \rceil ^n+1/n}^{t}(t-r)^{\alpha+\beta-1}\int_{\lceil r \rceil ^n-\frac{2}{n}}^{\lceil r \rceil ^n-\frac{1}{n}}\frac{\big(\lceil r \rceil ^n-\lceil u \rceil ^n\big)^{\beta'}}{(r-u)^{\alpha+1}}dudr\\
	&\leq C (t-s)^{\alpha+\beta-1} \bigg(\frac{2}{n}\bigg)^{\beta'}\int_{\lceil s \rceil ^n+1/n}^{t}\bigg(r-\lceil r \rceil ^n+\frac{1}{n}\bigg)^{-\alpha}dr\\
	&\leq C (t-s)^{\alpha+\beta-1} \bigg(\frac{2}{n}\bigg)^{\beta'}
	\bigg(\frac{t-s}{\frac{1}{n}}\bigg)\bigg(\frac{1}{n}\bigg)^{-\alpha+1}\\
	&\leq C (t-s)^{\beta+\beta'}.
	\end{align*}
\end{proof}

\begin{lemma}\label{contro1}
	Let $\beta$ and $\beta'$ satisfy $\beta+\beta'>1$. Let $g\in \mathcal{C}^{1}_b(\mathbb{R}^m;\mathbb{R})$, $x\in \mathcal{C}([s,t];\mathbb{R})$ and $z\in \mathcal{C}([s,t];\mathbb{R}^m)$. If $\|x\|_\beta$ and $\|z\|_{\beta',n}$ are all finite for any $n\in\mathbb{N_+}$, then for any $s=\lceil s \rceil^n$ and $t=\lceil t \rceil^n$, 
	\begin{align*}
	\bigg|\int_{s}^{t}g(z_{\lceil r \rceil ^n})dx_r\bigg|\leq C(g,\beta,\beta',T)(1+\|z\|_{s,t,\beta',n}(t-s)^{\beta'}) \|x\|_\beta (t-s)^\beta.
	\end{align*}
\end{lemma}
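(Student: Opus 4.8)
The plan is to bound the Young integral $\int_s^t g(z_{\lceil r\rceil^n})\,dx_r$ using the fractional-integration-by-parts representation recalled in Section~\ref{sec2}, with $\alpha\in(1-\beta,\beta')$ (such an $\alpha$ exists since $\beta+\beta'>1$ forces $1-\beta<\beta'$, and we may also take $\alpha<\beta$). Writing the integral as $(-1)^\alpha\int_s^t D^\alpha_{s+}\big(g(z_{\lceil\cdot\rceil^n})\big)_r\,D^{1-\alpha}_{t-}X_r\,dr$ with $X_r=x_r-x_t$, I first record the elementary bound $\big|D^{1-\alpha}_{t-}X_r\big|\le C\|x\|_\beta (t-r)^{\alpha+\beta-1}$, which is standard and uses $\beta>1-\alpha$ so the defining integral converges.

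Next I would estimate the forward Weyl derivative of the step-function composition $r\mapsto g(z_{\lceil r\rceil^n})$. From the definition,
\begin{align*}
\big(D^\alpha_{s+}g(z_{\lceil\cdot\rceil^n})\big)_r
=\frac{1}{\varGamma(1-\alpha)}\Bigg(\frac{g(z_{\lceil r\rceil^n})}{(r-s)^\alpha}
+\alpha\int_s^r\frac{g(z_{\lceil r\rceil^n})-g(z_{\lceil u\rceil^n})}{(r-u)^{\alpha+1}}\,du\Bigg).
\end{align*}
The first term contributes $\le C\|g\|_\infty (r-s)^{-\alpha}$, and after multiplying by $(t-r)^{\alpha+\beta-1}$ and integrating in $r$ this yields a Beta-function factor times $\|g\|_\infty(t-s)^\beta$, matching the ``$1\cdot\|x\|_\beta(t-s)^\beta$'' part of the claim. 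For the double integral I use $|g(z_{\lceil r\rceil^n})-g(z_{\lceil u\rceil^n})|\le \|g'\|_\infty |z_{\lceil r\rceil^n}-z_{\lceil u\rceil^n}|\le \|g'\|_\infty\|z\|_{s,t,\beta',n}(\lceil r\rceil^n-\lceil u\rceil^n)^{\beta'}$, which is exactly the numerator appearing in Lemma~\ref{fcalculus}.

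The key point is that the resulting quantity is precisely
\begin{align*}
\int_s^t (t-r)^{\alpha+\beta-1}\int_s^r\frac{(\lceil r\rceil^n-\lceil u\rceil^n)^{\beta'}}{(r-u)^{\alpha+1}}\,du\,dr,
\end{align*}
so Lemma~\ref{fcalculus} (whose hypotheses $\beta'>\alpha>1-\beta$ we have arranged) bounds it by $C(t-s)^{\beta+\beta'}$. Collecting the two contributions and factoring out $\|x\|_\beta(t-s)^\beta$ gives the stated estimate with constant $C(g,\beta,\beta',T)$. The only genuine subtlety — and the step I expect to take the most care — is the bookkeeping in Lemma~\ref{fcalculus}'s application: one must check that $r\mapsto g(z_{\lceil r\rceil^n})$ is a bona fide step function constant on each $(t_j,t_{j+1}]$ so that the naive $u$-integral is supported away from the diagonal (replacing $r-u$ by something $\ge 1/n$ is what makes everything finite), and that restricting to $s=\lceil s\rceil^n$, $t=\lceil t\rceil^n$ is what lets the fractional-calculus identity be applied to this piecewise-constant integrand at all. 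Everything else is routine Beta-integral estimation.
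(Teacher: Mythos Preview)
Your proposal is correct and follows essentially the same route as the paper: choose $\alpha\in(1-\beta,\beta')$, use the fractional Weyl representation of the Young integral, bound $|D^{1-\alpha}_{t-}(x_r-x_t)|\le C\|x\|_\beta(t-r)^{\alpha+\beta-1}$, split $|D^\alpha_{s+}g(z_{\lceil r\rceil^n})|$ into the $(r-s)^{-\alpha}$ piece and the increment piece controlled by $\|z\|_{s,t,\beta',n}(\lceil r\rceil^n-\lceil u\rceil^n)^{\beta'}$, and then invoke Lemma~\ref{fcalculus}. The only cosmetic differences are that the paper first reduces to $m=1$ by norm equivalence and does not impose your extra (and unnecessary) condition $\alpha<\beta$.
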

\begin{proof}
	Considering the equivalence of norms in $\mathbb{R}^m$, we suppose $m=1$ here for simplicity without loss of generality.
	Let $\alpha$ satisfy $\alpha<\beta'$ and $\beta+\alpha>1$. According to the characterization of the integral in Section \ref{sec2}, 
	\begin{align*}
	\int_{s}^{t}g(z_{\lceil r \rceil ^n})dx_r=
	\int_{s}^{t}D^\alpha_{s+}g(z_{\lceil r \rceil ^n})D^{1-\alpha}_{t-}(x_r-x_t)dr.
	\end{align*}
	Combining the fractional Weyl derivatives, we have, for $s<r<t$,
	\begin{align*}
	\bigg|D^\alpha_{s+}g(z_{\lceil r \rceil ^n})\bigg|&\le C\Bigg(\Bigg|\frac{g(z_{\lceil r \rceil ^n})}{(r-s)^\alpha}\Bigg|
	+\int_{s}^{r}\frac{\big|g(z_{\lceil r \rceil ^n})-g(z_{\lceil u \rceil ^n})\big|}{(r-u)^{\alpha+1}}du\Bigg)\\
	&\le C\Bigg(\frac{1}{(r-s)^\alpha}
	+\int_{s}^{r}\frac{\big|z_{\lceil r \rceil ^n}-z_{\lceil u \rceil ^n}\big|}{(r-u)^{\alpha+1}}du\Bigg)\\
	&\le C\Bigg(\frac{1}{(r-s)^\alpha}
	+\|z\|_{s,t,\beta',n}\int_{s}^{r}\frac{\big(\lceil r \rceil ^n-\lceil u \rceil ^n\big)^{\beta'}}{(r-u)^{\alpha+1}}du\Bigg),
	\end{align*}
	and
	\begin{align*}
	\bigg|D^{1-\alpha}_{t-}(x_r-x_t)\bigg|&\le C\Bigg(\bigg|\frac{x_r-x_t}{(t-r)^{1-\alpha}}\bigg|
	+\int_{r}^{t}\frac{\big|x_r-x_u\big|}{(u-r)^{2-\alpha}}du\Bigg)\\
	& \le C \|x\|_\beta (t-r)^{\alpha+\beta-1}.
	\end{align*}
	Using Lemma \ref{fcalculus}, we obtian
	\begin{align*}
	&\bigg|\int_{s}^{t}g(z_{\lceil r \rceil ^n})dx_r\bigg|\\
	\le&
	\int_{s}^{t}\Big|D^\alpha_{s+}g(z_{\lceil r \rceil ^n})D^{1-\alpha}_{t-}(x_r-x_t)\Big|dr\\
	\le& C \int_{s}^{t}\Bigg(\frac{1}{(r-s)^\alpha}
	+\|z\|_{s,t,\beta',n}\int_{s}^{r}\frac{\big(\lceil r \rceil ^n-\lceil u \rceil ^n\big)^{\beta'}}{(r-u)^{\alpha+1}}du\Bigg)\|x\|_\beta (t-r)^{\alpha+\beta-1}dr\\
	\le& C(1+\|z\|_{s,t,\beta',n}(t-s)^{\beta'}) \|x\|_\beta (t-s)^\beta.
	\end{align*}
\end{proof}

\begin{proposition}\label{varRK}
	If $V\in \mathcal{C}^{1}_b(\mathbb{R}^m;\mathbb{R}^{m\times d})$ and $1/2<\beta<H$, then for any $n\in \mathbb{N_+}$,
	\begin{align*}
	\sum_{i=1}^{\mathbf{s}}\|Y^{n}_{\cdot,i}\|_{\beta}&\le C(c,\mathbf{s},V,\beta,T)\max\bigg\{\|X\|_{\beta},\|X\|^{1/\beta}_{\beta}\bigg\},\\
	\sum_{i=1}^{\mathbf{s}}\|Y^{n}_{\cdot,i}\|_{\infty}&\le \mathbf{s}|y|+C(c,\mathbf{s},V,\beta,T)\max\bigg\{\|X\|_{\beta},\|X\|^{1/\beta}_{\beta}\bigg\},\\
	\|Y^{n}_{\cdot}\|_{\beta}&\le C(c,\mathbf{s},V,\beta,T)\max\bigg\{\|X\|_{\beta},\|X\|^{1/\beta+1}_{\beta}\bigg\},\\
	\|Y^{n}_{\cdot}\|_{\infty}&\le |y|+C(c,\mathbf{s},V,\beta,T)\max\bigg\{\|X\|_{\beta},\|X\|^{1/\beta+1}_{\beta}\bigg\},
	\end{align*}
	where $c=\max\{|a_{ij}|,|b_i|:i,j=1,\cdots,\mathbf{s}\}$.
	
	Moreover, for some $C_0>0$ and $0\le s<t\le T$ such that $\|X\|_\beta|t-s|^\beta\le C_0$, the estimate can be improved to
	\begin{align*}
	\sum_{i=1}^{\mathbf{s}}\|Y^{n}_{\cdot,i}\|_{s,t,\beta}&\le C(c,\mathbf{s},V,\beta,T,C_0)\|X\|_{\beta},\\
	\|Y^{n}_{\cdot}\|_{s,t,\beta}&\le C(c,\mathbf{s},V,\beta,T,C_0)\|X\|_{\beta}.
	\end{align*}
\end{proposition}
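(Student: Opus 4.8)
The plan is to mimic, at the level of the scheme, the proof of the a priori bound in Lemma~\ref{well}: I would first control $Y^n_\cdot$ and the stage values $Y^n_{\cdot,i}$ on a block on which $\|X\|_\beta|t-s|^\beta$ is small, and then patch such blocks along $[0,T]$. The two working tools are Lemma~\ref{contro1}, which bounds the Young-type integrals occurring in \eqref{middlec}--\eqref{schemec}, and the almost sure finiteness of $\|Y^n_\cdot\|_{\beta,n}$ and $\|Y^n_{\cdot,i}\|_{\beta,n}$ established above, which is what legitimizes the absorption step below. As a preliminary remark, $\lceil\cdot\rceil^n$ is constant on each $(t_k,t_{k+1}]$, so whenever $t',t$ lie in one mesh interval the integrals in \eqref{middlec}--\eqref{schemec} reduce to plain increments of $X$ and a one-line computation gives $|Y^n_t-Y^n_{t'}|+\sum_i|Y^n_{t,i}-Y^n_{t',i}|\le C(\mathbf{s},c,\nu)\|X\|_\beta|t-t'|^\beta$; hence one may pass between the continuous seminorms $\|\cdot\|_{s,t,\beta}$ and the discrete ones $\|\cdot\|_{s,t,\beta,n}$ up to such a term, and it suffices to estimate increments between mesh points.

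\emph{Block estimate.} For mesh points $t_a<t_b$, I would sum \eqref{scheme} and use \eqref{middle} to eliminate the $Y^n_{t_k}$: every stage-value increment $Y^n_{t_q,i}-Y^n_{t_p,i}$ with $t_a\le t_p<t_q\le t_b$ then becomes a finite linear combination, with coefficients built from the $a_{ij},b_i$, of integrals $\int_u^v g(Y^n_{\lceil r\rceil^n,j})\,dX^l_r$ with $g$ a scalar component of some $V_l$ and $[u,v]$ a subinterval of $[t_a,t_b]$, plus finitely many one-step terms, each bounded by $C(\mathbf{s},c,\nu)\|X\|_\beta h^\beta\le C(\mathbf{s},c,\nu)\|X\|_\beta(t_q-t_p)^\beta$ (for $a\ge1$, the mesh interval $[t_{a-1},t_a]$ protruding below the block is itself peeled off as one more such term; the case $a=0$ is simpler, nothing protrudes and $Y^n_{t_0,i}=y$). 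Applying Lemma~\ref{contro1} to each integral, with $g\in\mathcal{C}^1_b(\mathbb{R}^m;\mathbb{R})$ (since $V\in\mathcal{C}^1_b$), $z=Y^n_{\cdot,j}$, $x=X^l$ and $\beta'=\beta$ (admissible since $\beta+\beta'>1$), and summing over $i$ yields, with $C=C(c,\mathbf{s},V,\beta,T)$,
\[
\sum_{i=1}^{\mathbf{s}}\|Y^n_{\cdot,i}\|_{t_a,t_b,\beta,n}\le C\|X\|_\beta+C\|X\|_\beta(t_b-t_a)^\beta\sum_{j=1}^{\mathbf{s}}\|Y^n_{\cdot,j}\|_{t_a,t_b,\beta,n}.
\]
Fixing $C_0=C_0(c,\mathbf{s},V,\beta,T)$ with $CC_0\le\frac12$ and using that the left-hand side is a.s.\ finite, the hypothesis $\|X\|_\beta(t_b-t_a)^\beta\le C_0$ lets me absorb the last term, so $\sum_i\|Y^n_{\cdot,i}\|_{t_a,t_b,\beta,n}\le C\|X\|_\beta$; feeding this into \eqref{schemec} and invoking Lemma~\ref{contro1} once more (now without any index shift) gives $\|Y^n_\cdot\|_{t_a,t_b,\beta,n}\le C\|X\|_\beta$ on the same block. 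For general $s<t$ with $\|X\|_\beta(t-s)^\beta\le C_0$, splitting each $[u,v]\subseteq[s,t]$ at $\lceil u\rceil^n$ and at the largest mesh point $\le v$, treating the two outer pieces by the preliminary remark, proves the improved estimates (the last two lines of the proposition).

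\emph{Patching and the global bounds.} Next I would partition $[0,T]$ at mesh points into consecutive blocks $[\tau_p,\tau_{p+1}]$, $p=0,\dots,M-1$, each with $\|X\|_\beta(\tau_{p+1}-\tau_p)^\beta\le C_0$ (a greedy block being either a single mesh interval or of length of order $(C_0/\|X\|_\beta)^{1/\beta}$), so that $M\le C(T,C_0,\beta)(1+\|X\|_\beta^{1/\beta})$. Telescoping over the blocks met by a mesh interval $[u,v]$, using the block estimate on each piece and the concavity bound $\sum_p(\tau_{p+1}-\tau_p)^\beta\le M^{1-\beta}T^\beta$, yields $|Y^n_{v,i}-Y^n_{u,i}|\le C\|X\|_\beta M^{1-\beta}(v-u)^\beta$ and, at $u=0$, $|Y^n_{v,i}-y|\le C\|X\|_\beta M^{1-\beta}T^\beta$; since $\|X\|_\beta M^{1-\beta}\le C\max\{\|X\|_\beta,\|X\|_\beta^{1/\beta}\}$ by subadditivity of $t\mapsto t^{1-\beta}$, the preliminary remark promotes these to the stated bounds on $\sum_i\|Y^n_{\cdot,i}\|_\beta$ and $\sum_i\|Y^n_{\cdot,i}\|_\infty$. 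Finally, plugging these stage-value bounds into \eqref{schemec} and applying Lemma~\ref{contro1} on mesh intervals $[u,v]$ (and the preliminary remark off the mesh) gives $\|Y^n_\cdot\|_\beta\le C(1+\sum_i\|Y^n_{\cdot,i}\|_\beta T^\beta)\|X\|_\beta\le C\max\{\|X\|_\beta,\|X\|_\beta^{1/\beta+1}\}$ and, in the same way, $\|Y^n_\cdot\|_\infty\le|y|+C\max\{\|X\|_\beta,\|X\|_\beta^{1/\beta+1}\}$; the extra power of $\|X\|_\beta$ is exactly the price of this coarser, but sufficient, last step.

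The main obstacle is the block estimate: one must rewrite the implicit recursion \eqref{middle}--\eqref{scheme} as a sum of Young-type integrals fitting Lemma~\ref{contro1}, arranged so that the H\"older seminorm appearing on the right-hand side is the one internal to the current block (this is what makes the absorption close), while carefully handling the one-step index shifts $t_{p-1}\leftrightarrow t_p$ and the boundary case $a=0$. The remaining arguments are the standard greedy-patching bookkeeping, the only mildly delicate point there being to read off the correct powers $\|X\|_\beta^{1/\beta}$ and $\|X\|_\beta^{1/\beta+1}$ from the factor $M^{1-\beta}$.
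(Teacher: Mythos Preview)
Your proposal is correct and follows essentially the same strategy as the paper's proof: both derive the key block inequality $\sum_i\|Y^n_{\cdot,i}\|_{t_a,t_b,\beta,n}\le C\|X\|_\beta\bigl(1+(t_b-t_a)^\beta\sum_j\|Y^n_{\cdot,j}\|_{t_a,t_b,\beta,n}\bigr)$ from Lemma~\ref{contro1} applied to the decomposition of $Y^n_{t,i}-Y^n_{s,i}$ into the four integrals over $[s,(t-h)\vee0]$, $[(t-h)\vee0,t]$, $[(s-h)\vee0,s]$ (this is your ``elimination of the $Y^n_{t_k}$'' and ``peeling off'' of the protruding interval), absorb on blocks with $\|X\|_\beta(t_b-t_a)^\beta$ small, and patch. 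The paper organizes the patching slightly differently---it fixes a uniform block length $N_1\sim(2C_1\|X\|_\beta)^{-1/\beta}$ and treats the coarse-mesh regime $n<2T(2C_1\|X\|_\beta)^{1/\beta}$ as a separate explicit case, whereas you fold this into the greedy construction via single-step blocks and the preliminary remark---but the arithmetic producing the powers $\|X\|_\beta^{1/\beta}$ and $\|X\|_\beta^{1/\beta+1}$ is identical.
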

\begin{proof}
	We first take $s=\lceil s \rceil^n$ and $t=\lceil t \rceil^n$, then Lemma \ref{contro1} yields
	\begin{align*}
	\big|Y^n_{t,i}-Y^n_{s,i}\big|
	\le &\sum_{j=1}^{\mathbf{s}}\bigg|\int_{0}^{(t-h)\vee 0}b_jV(Y^n_{\lceil r \rceil ^n,j})dX_r
	+\int_{(t-h)\vee 0}^{t}a_{ij}V(Y^n_{\lceil r \rceil ^n,j})dX_r\\
	&-\int_{0}^{(s-h)\vee 0}b_jV(Y^n_{\lceil r \rceil ^n,j})dX_r
	-\int_{(s-h)\vee 0}^{s}a_{ij}V(Y^n_{\lceil r \rceil ^n,j})dX_r\bigg|\\
	\le &\sum_{j=1}^{\mathbf{s}}\bigg|\int_{s}^{(t-h)\vee 0}b_jV(Y^n_{\lceil r \rceil ^n,j})dX_r\bigg|
	+\bigg|\int_{(t-h)\vee 0}^{t}a_{ij}V(Y^n_{\lceil r \rceil ^n,j})dX_r\bigg|\\
	&+\bigg|\int_{(s-h)\vee 0}^{s}b_jV(Y^n_{\lceil r \rceil ^n,j})dX_r\bigg|
	+\bigg|\int_{(s-h)\vee 0}^{s}a_{ij}V(Y^n_{\lceil r \rceil ^n,j})dX_r\bigg|\\
	\le & C(c,V,\beta,T)(1+\sum_{j=1}^{\mathbf{s}}\|Y^n_{\cdot,j}\|_{s,t,\beta,n}(t-s)^{\beta}) \|X\|_\beta (t-s)^\beta.
	\end{align*}
	Summing up above inequalities for all $i=1,\cdots,\mathbf{s}$ and dividing both sides by $(t-s)^\beta$, we have
	\begin{align*}
	\sum_{i=1}^{\mathbf{s}}\|Y^n_{\cdot,i}\|_{s,t,\beta,n}
	&\le  (C(c,\mathbf{s},V,\beta,T)\vee1)(1+\sum_{i=1}^{\mathbf{s}}\|Y^n_{\cdot,i}\|_{s,t,\beta,n}(t-s)^{\beta}) \|X\|_\beta\\
	&=:C_1(1+\sum_{i=1}^{\mathbf{s}}\|Y^n_{\cdot,i}\|_{s,t,\beta,n}(t-s)^{\beta}) \|X\|_\beta.
	\end{align*}
	
	If $n\ge 2T(2C_1 \|X\|_\beta)^{1/\beta}$, then there exist $N_0\in \mathbb{N_+}$ and $N_1=\frac{N_0T}{n}$ such that 
	$$(2C_1 \|X\|_\beta)^{-1/\beta}\le 2N_1 \le 2(2C_1 \|X\|_\beta)^{-1/\beta}.$$
	When $t-s=N_1$, considering the choice for $N_1$, we get 
	\begin{align*}
	\sum_{i=1}^{\mathbf{s}}\|Y^n_{\cdot,i}\|_{s,t,\beta,n}
	\le 2C_1\|X\|_\beta.
	\end{align*}
	When $t-s>N_1$, 
	\begin{align*}
	\sum_{i=1}^{\mathbf{s}}\|Y^n_{\cdot,i}\|_{s,t,\beta,n}
	&\le C \bigg(\bigg[\frac{t-s}{N_1}\bigg]+1\bigg)\sup_{r=\lceil r \rceil ^n\le t_{n-1}}\sum_{i=1}^{\mathbf{s}}\|Y^n_{\cdot,i}\|_{r,r+N_1,\beta,n}\frac{N_1^\beta}{(t-s)^\beta},
	\end{align*}
	where $[t]$ means the largest integer which is not larger than $t$.
	So we obtain  
	\begin{align*}
	\sum_{i=1}^{\mathbf{s}}\|Y^n_{\cdot,i}\|_{s,t,\beta,n}
	&\le C\bigg(\frac{T}{N_1}+1\bigg)^{1-\beta}C_1\|X\|_\beta\\
	&\le C\Big(\|X\|_\beta^{1/\beta-1}+1\Big)\|X\|_\beta\\
	&\le C\max\bigg\{\|X\|_{\beta},\|X\|^{1/\beta}_{\beta}\bigg\},
	\end{align*}
	where the second inequality is from the choice of $N_1$.
	If $n< 2T(2C_1 \|X\|_\beta)^{1/\beta}$, the definition of $Y^n_{\cdot,i}$ leads to
	\begin{align*}
	\sum_{i=1}^{\mathbf{s}}|Y^n_{t,i}-Y^n_{s,i}|
	\le C(t-s)\bigg(\frac{T}{n}\bigg) ^{-1+\beta}\|X\|_{\beta},
	\end{align*}
	and then
	\begin{align*}
	\sum_{i=1}^{\mathbf{s}}\|Y^n_{\cdot,i}\|_{s,t,\beta,n}
	\le C\|X\|_{\beta}^{1/\beta}.
	\end{align*}
	Now we can take any $0\le s<t\le T$ and suppose $t\in(t_k,t_{k+1}]$, then
	\begin{align*}
	\sum_{i=1}^{\mathbf{s}}\|Y^n_{\cdot,i}\|_{s,t,\beta}
	\le C\|X\|_{\beta}+\sum_{i=1}^{\mathbf{s}}\|Y^n_{\cdot,i}\|_{\lceil s \rceil ^n,t_k,\beta,n}
	\le C\max\bigg\{\|X\|_{\beta},\|X\|^{1/\beta}_{\beta}\bigg\}.
	\end{align*}
	Therefore, 
	\begin{align*}
	\|Y^{n}_{\cdot,i}\|_{\beta}&\le C(c,\mathbf{s},V,\beta,T)\max\bigg\{\|X\|_{\beta},\|X\|^{1/\beta}_{\beta}\bigg\},\\
	\|Y^{n}_{\cdot,i}\|_{\infty}&\le |y|+C(c,\mathbf{s},V,\beta,T)\max\bigg\{\|X\|_{\beta},\|X\|^{1/\beta}_{\beta}\bigg\}.
	\end{align*}
	
	Moreover, if $C_0 \in (0,1/C_1)$, then for any $0\le s<t\le T$ such that\\
	$\|X\|_\beta|t-s|^\beta\le C_0$, it can be improved to
	\begin{align*}
	\sum_{i=1}^{\mathbf{s}}\|Y^{n}_{\cdot,i}\|_{s,t,\beta}\le C(C_1,\beta,T,C_0)\|X\|_{\beta}.
	\end{align*}

	On the other hand, since
	\begin{align*}
	\big|Y^n_{t}-Y^n_{s}\big|
	\le& \sum_{i=1}^{\mathbf{s}}\bigg|\int_{s}^{t}b_iV(Y^n_{\lceil r \rceil ^n,j})dX_r\bigg|\\
	\le & C(1+\sum_{j=1}^{\mathbf{s}}\|Y^n_{\cdot,j}\|_{s,t,\beta,n}(t-s)^{\beta}) \|X\|_\beta (t-s)^\beta\\
	\le & C(c,\mathbf{s},V,\beta,T)\max\bigg\{\|X\|_{\beta},\|X\|^{1/\beta+1}_{\beta}\bigg\}(t-s)^\beta,
	\end{align*}
	we obtain that 
	\begin{align*}
	\|Y^{n}_{\cdot}\|_{\beta}&\le C(c,\mathbf{s},V,\beta,T)\max\bigg\{\|X\|_{\beta},\|X\|^{1/\beta+1}_{\beta}\bigg\},\\
	\|Y^{n}_{\cdot}\|_{\infty}&\le |y|+C(c,\mathbf{s},V,\beta,T)\max\bigg\{\|X\|_{\beta},\|X\|^{1/\beta+1}_{\beta}\bigg\},
	\end{align*}
	and for any $0\le s<t\le T$ such that  $\|X\|_\beta|t-s|^\beta\le C_0\in(0,1/C_1)$, 
	\begin{align*}
	\|Y^{n}_{\cdot}\|_{s,t,\beta}\le C(c,\mathbf{s},V,\beta,T,C_0)\|X\|_{\beta}.
	\end{align*}
	
\end{proof}

\subsection{Simplified  step-$N$ Euler schemes}
Fix an integer $N\ge 2$. For $n\in\mathbb{N_+}$, denote the time step $h=\frac{T}{n}$ and $t_k=kh$, $k=0,\cdots,n$. The simplified step-$N$ Euler scheme of \eqref{sde} is:
\begin{align}
Y^n_{t_{k+1}}&=Y^n_{t_{k}}+\sum_{w=1}^{N}\sum^d_{l_w,\cdots,l_1=1}\mathscr{V}_{l_w}\cdots\mathscr{V}_{l_1}I(Y^n_{t_k})\frac{\Delta X_k^{l_w}\cdots\Delta X_k^{l_1}}{N!},\label{NEuler}
\end{align}
where every $\mathscr{V}_{l}$ is identified with the first order differential operator $\sum_{q}V^q_l(y)\frac{\partial}{\partial y^q}$.
Note that $Y^n_{t_{k}}$ may stand for numerical solutions of different schemes in different subsections if there is no confusion. 
We consider its corresponding continuous version as before. For $t\in(t_k,t_{k+1}]$,
$\lfloor t \rfloor _n:=t_{k}$ and for $t=0$, 
$\lfloor t \rfloor _n:=0$. Then the continuous version of \eqref{NEuler} is 
\begin{align}
Y^n_{t}&=y+\int_{0}^{t}\sum_{w,l_w,\cdots,l_1}\mathscr{V}_{l_w}\cdots\mathscr{V}_{l_1}I(Y^n_{\lfloor s \rfloor _n})\frac{\Big(X_{\lceil s \rceil ^n}^{l_w}-X_{\lfloor s \rfloor _n}^{l_w}\Big)\cdots\Big(X_{\lceil s \rceil ^n}^{l_2}-X_{\lfloor s \rfloor _n}^{l_2}\Big)}{N!}dX^{l_1}_s.\label{cNEuler}
\end{align}

Similar to the analysis for Runge--Kutta methods, the continuous dependence of $Y^n_\cdot$ on the driving noises in H\"older semi-norm is obtained.

\begin{lemma}
	If $V\in \mathcal{C}^{N-1}_b(\mathbb{R}^m;\mathbb{R}^{m\times d})$, then for any $n\in\mathbb{N_+}$ and $1/2<\beta<H$, $\|Y^{n}_\cdot\|_{\beta,n}$ are all finite almost surely.
\end{lemma}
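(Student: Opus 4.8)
The plan is to mimic the a priori estimate carried out for Runge--Kutta methods in Proposition~\ref{varRK}, replacing the role of the stage values with the nested increments appearing in \eqref{cNEuler}. First I would fix $n\in\mathbb{N_+}$ and $1/2<\beta<H$, take $s,t$ with $s<t$ and $s=\lceil s\rceil^n$, $t=\lceil t\rceil^n$, and decompose the increment $Y^n_t-Y^n_s$ using \eqref{cNEuler} into a sum of integrals of the form $\int_{s}^{t} G(Y^n_{\lfloor r\rfloor_n})\,\prod_{j\ge 2}\bigl(X^{l_j}_{\lceil r\rceil^n}-X^{l_j}_{\lfloor r\rfloor_n}\bigr)\,dX^{l_1}_r$, where $G=\mathscr{V}_{l_w}\cdots\mathscr{V}_{l_1}I$ is bounded because $V\in\mathcal{C}^{N-1}_b$. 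The key point is that each factor $X^{l_j}_{\lceil r\rceil^n}-X^{l_j}_{\lfloor r\rfloor_n}$ is a one-step increment of $X$, hence bounded in absolute value by $\|X\|_\beta h^\beta$, so on any block the nested product contributes at most $(\|X\|_\beta h^\beta)^{w-1}$; in particular the integrand is a bounded function of $\lfloor r\rfloor_n$ multiplied by these controlled quantities.

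Next I would estimate each such integral by the Young/fractional-calculus bound, exactly as in Lemma~\ref{contro1}: choosing $\alpha$ with $1-\beta<\alpha<\beta$, one writes $\int_s^t(\cdots)dX^{l_1}_r=\int_s^t D^\alpha_{s+}\bigl(G(Y^n_{\lfloor r\rfloor_n})\prod_j(\cdots)\bigr)\,D^{1-\alpha}_{t-}(X^{l_1}_r-X^{l_1}_t)\,dr$. The factor $D^{1-\alpha}_{t-}(X^{l_1}_\cdot-X^{l_1}_t)$ is bounded by $C\|X\|_\beta(t-r)^{\alpha+\beta-1}$. For the Weyl derivative of the integrand one uses boundedness of $G$ together with the fact that $G(Y^n_{\lfloor r\rfloor_n})\prod_j(\cdots)$ is a step function in $r$ that changes only at grid points and whose increments over a grid gap are controlled by $\|Y^n_\cdot\|_{\beta,n}h^\beta$ and $\|X\|_\beta h^\beta$; this yields a bound of the shape $C(1+\|Y^n_\cdot\|_{s,t,\beta,n})\|X\|_\beta^{w-1}\|X\|_\beta(t-s)^\beta$ after invoking Lemma~\ref{fcalculus} for the double-integral term. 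Summing over $w=1,\dots,N$ and over the finitely many multi-indices $l_1,\dots,l_w$, one obtains
\begin{align*}
\|Y^n_\cdot\|_{s,t,\beta,n}\le C_1\Bigl(1+\|Y^n_\cdot\|_{s,t,\beta,n}(t-s)^\beta\Bigr)\bigl(1+\|X\|_\beta\bigr)^{N}
\end{align*}
with $C_1=C_1(N,d,V,\beta,T)$. From here the block-splitting argument of Proposition~\ref{varRK} applies verbatim: choose blocks of length comparable to $\bigl(2C_1(1+\|X\|_\beta)^{N}\bigr)^{-1/\beta}$ (or handle the degenerate case $n$ small by the trivial one-step bound), absorb the self-referential term on short blocks, and concatenate at most $O\bigl((1+\|X\|_\beta)^{N/\beta}\bigr)$ blocks to conclude that $\|Y^n_\cdot\|_{\beta,n}<\infty$ almost surely, with an explicit polynomial-in-$\|X\|_\beta$ bound. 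Since $\|X\|_\beta<\infty$ a.s. for $\beta<H$, finiteness follows for every $n$.

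I expect the only genuine subtlety to be the Weyl-derivative estimate for the product $G(Y^n_{\lfloor r\rfloor_n})\prod_{j\ge2}(X^{l_j}_{\lceil r\rceil^n}-X^{l_j}_{\lfloor r\rfloor_n})$: unlike the single-composition term $g(z_{\lceil r\rceil^n})$ in Lemma~\ref{contro1}, here one must differentiate a product of several grid-piecewise functions, so one should expand $\bigl|P_r-P_u\bigr|$ by a telescoping/Leibniz-type bound and control each resulting term, using that one factor's increment is $O(h^\beta)$ while the others stay uniformly bounded by $\|X\|_\beta h^\beta\le C$ (recall we may assume $t-s\le T$ and, on the relevant blocks, $\|X\|_\beta(t-s)^\beta\le C_0$). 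Everything else is a routine repetition of the Runge--Kutta argument, with $N$ and the number of multi-indices $d^w$ entering only through the generic constant; indeed the statement does not even ask for the explicit dependence, only almost-sure finiteness, so one may be generous with constants throughout.
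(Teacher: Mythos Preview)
Your proposal is aimed at the wrong target. This lemma is the \emph{trivial} preliminary finiteness statement (with an $n$-dependent constant), parallel to the Runge--Kutta lemma stated just before Lemma~\ref{fcalculus}, whose bound $C(d,m,n,c,\nu,\mathbf{s})\|X\|_\beta$ explicitly depends on $n$. What you sketch is essentially the proof of Proposition~\ref{prop3}, which gives the refined $n$-independent estimate and, crucially, assumes $V\in\mathcal{C}^N_b$ rather than $\mathcal{C}^{N-1}_b$.

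That regularity gap is a genuine obstruction to your approach under the lemma's hypothesis. For the top-order term $w=N$, the coefficient $G=\mathscr{V}_{l_N}\cdots\mathscr{V}_{l_1}I$ involves derivatives of $V$ up to order $N-1$, so with only $V\in\mathcal{C}^{N-1}_b$ you know $G$ is bounded but not Lipschitz. Your Weyl-derivative step, which controls $|G(Y^n_{\lfloor r\rfloor_n})-G(Y^n_{\lfloor u\rfloor_n})|$ by $\|Y^n_\cdot\|_{\beta,n}(\lfloor r\rfloor_n-\lfloor u\rfloor_n)^\beta$, requires exactly this missing derivative; this is also why Lemma~\ref{contro2} assumes $g\in\mathcal{C}^1_b$ and why Proposition~\ref{prop3} upgrades to $\mathcal{C}^N_b$. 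A second, structural issue: Lemma~\ref{contro2} takes $\|z\|_{\beta',n}<\infty$ as a \emph{hypothesis}, and the self-referential inequality you write can only be rearranged once that finiteness is already known. The present lemma is precisely what supplies that input.

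The intended argument is a one-line telescope. From \eqref{NEuler} and boundedness of each $\mathscr{V}_{l_w}\cdots\mathscr{V}_{l_1}I$ (valid under $\mathcal{C}^{N-1}_b$),
\[
|Y^n_{t_{k+1}}-Y^n_{t_k}|\le C\sum_{w=1}^N\bigl(\|X\|_\beta h^\beta\bigr)^{w},
\]
so for grid points $t_i<t_j$,
\[
\frac{|Y^n_{t_j}-Y^n_{t_i}|}{(t_j-t_i)^\beta}
\le (j-i)^{1-\beta}\,C\sum_{w=1}^N\|X\|_\beta^{w}\,h^{\beta(w-1)}
\le C(n,N,V,\beta,T)\max\bigl\{\|X\|_\beta,\|X\|_\beta^N\bigr\},
\]
which is finite a.s.\ since $\|X\|_\beta<\infty$ a.s. The discrete H\"older semi-norm is a maximum over finitely many such pairs, so finiteness follows.
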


\begin{lemma}\label{fcalculus2}
	Let $\alpha$, $\beta$ and $\beta'$ satisfy $\beta'>\alpha>1-\beta$. Then for any $s,t\in[0,T]$ such that $s<t$ and $s=\lceil s \rceil ^n$, there exists a constant $C=C(\alpha,\beta,\beta',T)$ such that 
	\begin{align*}
	\int_{s}^{t}(t-r)^{\alpha+\beta-1}\int_{s}^{r}\frac{(\lfloor r \rfloor _n-\lfloor u \rfloor _n)^{\beta'}}{(r-u)^{\alpha+1}}dudr\leq K (t-s)^{\beta+\beta'}.
	\end{align*}
\end{lemma}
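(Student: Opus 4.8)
The plan is to mimic the proof of Lemma~\ref{fcalculus} with $\lceil\cdot\rceil^n$ replaced by $\lfloor\cdot\rfloor_n$; the only structural change is that $\lfloor r\rfloor_n-\lfloor u\rfloor_n$ can vanish when $r,u$ lie in the same subinterval, which if anything makes the estimate easier on the diagonal. As before, normalize $T=1$. Using that $s=\lceil s\rceil^n$ is a grid point and that the integrand in $u$ vanishes unless $\lfloor r\rfloor_n-\lfloor u\rfloor_n\ge 1/n$ (i.e. unless $u<\lfloor r\rfloor_n$), restrict the inner integral to $u\in[s,\lfloor r\rfloor_n-1/n]$ and the outer integral to $r\in[s+1/n,t]$. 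Then split the inner $u$-integral as $\int_s^{\lfloor r\rfloor_n-2/n}+\int_{\lfloor r\rfloor_n-2/n}^{\lfloor r\rfloor_n-1/n}=:I_1+I_2$, exactly the bookkeeping used for $\lceil\cdot\rceil^n$.

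For $I_1$ one has $r-u>2/n$ on the relevant region and $\lfloor r\rfloor_n-\lfloor u\rfloor_n\le r-u+1/n\le \frac32(r-u)$, so $(\lfloor r\rfloor_n-\lfloor u\rfloor_n)^{\beta'}\le C(r-u)^{\beta'}$; then
\begin{align*}
I_1&\le C\int_{s+1/n}^{t}(t-r)^{\alpha+\beta-1}\int_s^{\lfloor r\rfloor_n-2/n}(r-u)^{\beta'-\alpha-1}\,du\,dr\\
&\le C\int_{s+1/n}^{t}(t-r)^{\alpha+\beta-1}(r-s)^{\beta'-\alpha}\,dr\le C\int_{s+1/n}^{t}(t-r)^{\alpha+\beta-1}(t-s)^{\beta'-\alpha}\,dr\le C(t-s)^{\beta+\beta'},
\end{align*}
using $\beta'-\alpha>0$ (so the $u$-integral converges and is bounded by $C(r-s)^{\beta'-\alpha}$), then $r-s\le t-s$, and finally $\alpha+\beta-1>0$ for the $r$-integral. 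For $I_2$ the range of $u$ has length $1/n$ and there $\lfloor r\rfloor_n-\lfloor u\rfloor_n$ equals $1/n$, while $r-u\ge r-\lfloor r\rfloor_n+1/n$, so
\begin{align*}
I_2&\le C\,(t-s)^{\alpha+\beta-1}\Big(\tfrac1n\Big)^{\beta'}\int_{s+1/n}^{t}\Big(r-\lfloor r\rfloor_n+\tfrac1n\Big)^{-\alpha}\,dr\le C\,(t-s)^{\alpha+\beta-1}\Big(\tfrac1n\Big)^{\beta'}\,(t-s)\,n\,\Big(\tfrac1n\Big)^{1-\alpha},
\end{align*}
where the last step bounds the periodic integral by the number of subintervals $\le n(t-s)$ times $\sup\big(r-\lfloor r\rfloor_n+\tfrac1n\big)^{-\alpha}\cdot\frac1n\le C(\tfrac1n)^{1-\alpha}$ (note $\alpha<1$, so $(\cdot)^{-\alpha}$ is integrable over one cell); collecting powers of $n$ gives $n^{-\beta'}\cdot n\cdot n^{-(1-\alpha)}=n^{\alpha-\beta'}\le 1$ since $\alpha<\beta'$, whence $I_2\le C(t-s)^{\alpha+\beta}\le C(t-s)^{\beta+\beta'}$ after using $(t-s)\le 1$ and $\alpha\ge\beta'$ is \emph{not} assumed — instead bound $(t-s)^{\alpha+\beta}\le(t-s)^{\beta+\beta'}$ only when $\alpha\ge\beta'$, so more carefully keep the clean form $I_2\le C\,n^{\alpha-\beta'}(t-s)^{\alpha+\beta}$ and note $n\ge 1/(t-s)$ on the nonempty range forces $n^{\alpha-\beta'}\le(t-s)^{\beta'-\alpha}$, giving $I_2\le C(t-s)^{\beta+\beta'}$. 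Summing, $I_1+I_2\le K(t-s)^{\beta+\beta'}$ with $K=K(\alpha,\beta,\beta',T)$, which is the claim.

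The only genuinely delicate point, and the place I expect to spend the most care, is the $I_2$ term: one must use $n\ge 1/(t-s)$ (valid precisely because the integral is nonzero only when $t-s\ge 2/n$) to convert the leftover negative power $n^{\alpha-\beta'}$ into the positive Hölder power $(t-s)^{\beta'-\alpha}$, and one must check that the ``periodic integral'' $\int(r-\lfloor r\rfloor_n+\tfrac1n)^{-\alpha}\,dr$ is controlled cell-by-cell by $C n^{\alpha-1}$ uniformly — this needs $\alpha\in(1-\beta,1)\subset(0,1)$, which is exactly the hypothesis. Everything else is the same routine fractional-calculus bookkeeping as in Lemma~\ref{fcalculus}, so I would simply write ``the proof is identical to that of Lemma~\ref{fcalculus}, replacing $\lceil\cdot\rceil^n$ by $\lfloor\cdot\rfloor_n$ and using $\lfloor r\rfloor_n-\lfloor u\rfloor_n\le r-u+\tfrac1n$ together with $\lfloor r\rfloor_n-\lfloor u\rfloor_n=0$ when $\lfloor r\rfloor_n=\lfloor u\rfloor_n$'' and then reproduce the two displays above for $I_1$ and $I_2$.
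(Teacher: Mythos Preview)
Your approach is the paper's approach --- its proof is the single line ``Similar to the proof of Lemma~\ref{fcalculus}.'' There is, however, a bookkeeping slip in your execution. The integrand vanishes only when $\lfloor u\rfloor_n=\lfloor r\rfloor_n$, i.e.\ when $u\in(\lfloor r\rfloor_n,r]$, so the nonzero $u$-range is $[s,\lfloor r\rfloor_n]$, not $[s,\lfloor r\rfloor_n-\tfrac1n]$. Your split $I_1+I_2=\int_s^{\lfloor r\rfloor_n-2/n}+\int_{\lfloor r\rfloor_n-2/n}^{\lfloor r\rfloor_n-1/n}$ therefore omits $(\lfloor r\rfloor_n-\tfrac1n,\lfloor r\rfloor_n]$, which is precisely the piece where $r-u$ approaches the fractional part $r-\lfloor r\rfloor_n$ and can be arbitrarily small --- the singular behavior $I_2$ is meant to handle is not actually covered. (On your $I_2$ interval, $r-u\ge r-\lfloor r\rfloor_n+\tfrac1n\ge\tfrac1n$ stays bounded away from zero, and $\lfloor r\rfloor_n-\lfloor u\rfloor_n=\tfrac2n$ there, not $\tfrac1n$.) The fix is to shift the split by one cell: take $I_1$ over $[s,\lfloor r\rfloor_n-\tfrac1n]$ and $I_2$ over $[\lfloor r\rfloor_n-\tfrac1n,\lfloor r\rfloor_n]$; your displayed estimates then go through with $(r-\lfloor r\rfloor_n)^{-\alpha}$ in place of $(r-\lfloor r\rfloor_n+\tfrac1n)^{-\alpha}$ in the periodic-integral step.

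A quicker route avoids all of this: for Lebesgue-a.e.\ $r,u$ one has $\lceil r\rceil^n-\lceil u\rceil^n=\lfloor r\rfloor_n-\lfloor u\rfloor_n$ (both equal $\lceil\cdot\rceil^n=\lfloor\cdot\rfloor_n+\tfrac1n$ off the grid), so the double integral here coincides with that of Lemma~\ref{fcalculus} and there is nothing new to prove.
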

\begin{proof}
	Similar to the proof of Lemma \ref{fcalculus}.
\end{proof}

\begin{lemma}\label{contro2}
	Let $\beta$ and $\beta'$ satisfy $\beta+\beta'>1$. If $g\in \mathcal{C}^{1}_b(\mathbb{R}^m;\mathbb{R})$, $x^l\in \mathcal{C}([s,t];\mathbb{R})$, $l\in\{1,\cdots,d\}$, and $z\in \mathcal{C}([s,t];\mathbb{R}^m)$. If $\|x^l\|_\beta$, $\|z\|_{\beta',n}$ are all finite for any $l\in\{1,\cdots,d\}$, $n\in\mathbb{N_+}$, then for any $w\ge 2$, $s=\lceil s \rceil^n$ and $t=\lceil t \rceil^n$, 
	\begin{align*}
	&\bigg|\int_{s}^{t}g(z_{\lfloor r \rfloor _n})(x_{\lceil r \rceil _n}^{l_w}-x_{\lfloor r \rfloor _n}^{l_w})\cdots(x_{\lceil r \rceil ^n}^{l_2}-x_{\lfloor r \rfloor _n}^{l_2})dx^{l_1}_r\bigg|\\
	\le& C(g,\beta,\beta',T)(1+\|z\|_{s,t,\beta',n}(t-s)^{\beta'}) \|x^{l_w}\|_\beta\cdots\|x^{l_1}\|_\beta(t-s)^{\beta w},
	\end{align*}
	where $l_1,\cdots,l_w\in \{1,\cdots,d\}$.
\end{lemma}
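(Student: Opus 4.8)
The plan is to follow the same fractional-calculus route as in the proof of Lemma \ref{contro1}, treating the product $g(z_{\lfloor r\rfloor_n})(x^{l_w}_{\lceil r\rceil^n}-x^{l_w}_{\lfloor r\rfloor_n})\cdots(x^{l_2}_{\lceil r\rceil^n}-x^{l_2}_{\lfloor r\rfloor_n})$ as the new integrand against $dx^{l_1}_r$. Write $\widetilde g_r := g(z_{\lfloor r\rfloor_n})\prod_{j=2}^{w}(x^{l_j}_{\lceil r\rceil^n}-x^{l_j}_{\lfloor r\rfloor_n})$ and use the characterization $\int_s^t \widetilde g_r\,dx^{l_1}_r = (-1)^\alpha\int_s^t D^\alpha_{s+}\widetilde g_r\, D^{1-\alpha}_{t-}(x^{l_1}_\cdot - x^{l_1}_t)_r\,dr$, with $\alpha$ chosen so that $\beta+\alpha>1$ and $\alpha<\beta'$. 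The factor $D^{1-\alpha}_{t-}(x^{l_1}_\cdot - x^{l_1}_t)_r$ is bounded by $C\|x^{l_1}\|_\beta (t-r)^{\alpha+\beta-1}$ exactly as before, so the whole burden is the estimate of $|D^\alpha_{s+}\widetilde g_r|$.

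For that estimate I would first bound the pointwise size: $|\widetilde g_r|\le \|g\|_\infty \prod_{j=2}^w \|x^{l_j}\|_\beta (\lceil r\rceil^n-\lfloor r\rfloor_n)^\beta \le C\prod_{j=2}^w\|x^{l_j}\|_\beta\, h^{\beta(w-1)}$, which contributes the term $\frac{1}{(r-s)^\alpha}$ after integration (note $h^{\beta(w-1)}\le (t-s)^{\beta(w-1)}$). For the increment $\widetilde g_r - \widetilde g_u$ appearing inside the Weyl derivative integral I would use a telescoping/product-rule decomposition: the difference of two products $\prod a_j - \prod b_j$ is a sum of $w-1$ terms, each of which replaces one factor $(x^{l_j}_{\lceil r\rceil^n}-x^{l_j}_{\lfloor r\rfloor_n})-(x^{l_j}_{\lceil u\rceil^n}-x^{l_j}_{\lfloor u\rfloor_n})$ or the factor $g(z_{\lfloor r\rfloor_n})-g(z_{\lfloor u\rfloor_n})$, with the remaining factors kept and bounded by $C\,h^\beta$ (or $\|g\|_\infty$). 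The factor-difference $(x^{l_j}_{\lceil r\rceil^n}-x^{l_j}_{\lfloor r\rfloor_n})-(x^{l_j}_{\lceil u\rceil^n}-x^{l_j}_{\lfloor u\rfloor_n})$ is bounded by $2\|x^{l_j}\|_\beta |\lceil r\rceil^n - \lceil u\rceil^n|^\beta$ (and similarly with $\lfloor\cdot\rfloor_n$), while $|g(z_{\lfloor r\rfloor_n})-g(z_{\lfloor u\rfloor_n})|\le C\|z\|_{s,t,\beta',n}(\lfloor r\rfloor_n-\lfloor u\rfloor_n)^{\beta'}$. Inserting these into $\int_s^r \frac{|\widetilde g_r-\widetilde g_u|}{(r-u)^{\alpha+1}}\,du$ produces integrals of exactly the two types already controlled: one with $(\lceil r\rceil^n-\lceil u\rceil^n)^\beta/(r-u)^{\alpha+1}$ (handled by Lemma \ref{fcalculus}, since $\beta>\alpha$, the hypothesis $\beta+\beta'>1$ and $\alpha<\beta'$ give $\beta>1-\beta'\ge$ nothing directly, but we only need $\beta>\alpha$, which holds since we may take $\alpha$ close to $1-\beta<\beta$) and one with $(\lfloor r\rfloor_n-\lfloor u\rfloor_n)^{\beta'}/(r-u)^{\alpha+1}$ (handled by Lemma \ref{fcalculus2}). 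Collecting, $|D^\alpha_{s+}\widetilde g_r|\le C\prod_{j=2}^w\|x^{l_j}\|_\beta\big((r-s)^{-\alpha}(t-s)^{\beta(w-1)} + \|z\|_{s,t,\beta',n}(t-s)^{\beta(w-1)}\cdot(\text{power})\big)$, and multiplying by $(t-r)^{\alpha+\beta-1}\|x^{l_1}\|_\beta$ and integrating in $r$ over $[s,t]$ yields the claimed bound with the global power $(t-s)^{\beta w}$.

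The main obstacle I anticipate is purely bookkeeping: keeping the telescoping decomposition of the $w$-fold product clean enough that every resulting term reduces to an application of Lemma \ref{fcalculus} or Lemma \ref{fcalculus2} with the right exponents, and verifying that the leftover "kept" factors each contribute a genuine $(t-s)^\beta$ (rather than only $h^\beta$, which is fine but must be traded in correctly) so that the total power is exactly $\beta w$ and not less. A secondary subtlety is the choice of $\alpha$: we need simultaneously $\alpha>1-\beta$, $\alpha<\beta'$, and $\alpha<\beta$ (the last so that Lemma \ref{fcalculus} applies with $\beta$ in the role of $\beta'$ there); since $\beta>\tfrac12$ and $\beta'$ can be taken with $\beta+\beta'>1$, such an $\alpha$ exists, and I would state this choice explicitly at the start, exactly as in the proof of Lemma \ref{contro1}. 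No new ideas beyond those of Lemmas \ref{fcalculus}, \ref{fcalculus2} and \ref{contro1} are needed; the proof is "similar to the proof of Lemma \ref{contro1}" with the product rule inserted.
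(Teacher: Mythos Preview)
Your proposal is correct and follows essentially the same route as the paper: the paper likewise writes $\Phi(r)=g(z_{\lfloor r\rfloor_n})\prod_{j=2}^{w}(x^{l_j}_{\lceil r\rceil^n}-x^{l_j}_{\lfloor r\rfloor_n})$, bounds $|D^{1-\alpha}_{t-}(x^{l_1}_\cdot-x^{l_1}_t)_r|$ as in Lemma~\ref{contro1}, bounds $|\Phi_r|/(r-s)^\alpha$ pointwise, and telescopes $\Phi_r-\Phi_u$ into $w$ pieces $I_1,\dots,I_w$ exactly as you describe, then invokes Lemmas~\ref{fcalculus} and~\ref{fcalculus2}. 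Your observation that one also needs $\alpha<\beta$ (not only $\alpha<\beta'$) so that Lemma~\ref{fcalculus}/\ref{fcalculus2} applies to the terms carrying $(\lceil r\rceil^n-\lceil u\rceil^n)^\beta$ or $(\lfloor r\rfloor_n-\lfloor u\rfloor_n)^\beta$ is a point the paper leaves implicit (it is available since $\beta>\tfrac12$ in the ambient setting), so your version is if anything slightly more explicit.
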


\begin{proof}
	Let $\Phi(r)=g(z_{\lfloor r \rfloor _n})(x_{\lceil r \rceil ^n}^{l_w}-x_{\lfloor r \rfloor _n}^{l_w})\cdots(x_{\lceil r \rceil ^n}^{l_2}-x_{\lfloor r \rfloor _n}^{l_2})$. Taking $\alpha$ such that $\alpha<\beta'$ and $\beta+\alpha>1$, we first estimate the left fractional Weyl derivative of $\Phi$:
	\begin{align*}
	|D^\alpha_{s+}\Phi_r|\le \bigg|\frac{\Phi_r}{(r-s)^\alpha}\bigg|+\alpha\int_{s}^{r}\frac{\big|\Phi_r-\Phi_u\big|}{(r-u)^{\alpha+1}}du.
	\end{align*}
	For the first term, 
	\begin{align*}
	\bigg|\frac{\Phi_r}{(r-s)^\alpha}\bigg|\le C \|x^{l_w}\|_\beta\cdots\|x^{l_2}\|_\beta\bigg(\frac{T}{n}\bigg)^{\beta(w-1)}(r-s)^{-\alpha}.
	\end{align*}
	For the second term, we decompose $\Phi_r-\Phi_u$ into 
	{\small
		\begin{align*}
		&\Big[g(z_{\lfloor r \rfloor _n})\cdots(x_{\lceil r \rceil ^n}^{l_3}-x_{\lfloor r \rfloor _n}^{l_3})(x_{\lceil r \rceil ^n}^{l_2}-x_{\lfloor r \rfloor _n}^{l_2})-g(z_{\lfloor r \rfloor _n})\cdots(x_{\lceil r \rceil ^n}^{l_3}-x_{\lfloor r \rfloor _n}^{l_3})(x_{\lceil u \rceil ^n}^{l_2}-x_{\lfloor u \rfloor _n}^{l_2})\Big]\\
		+&\Big[g(z_{\lfloor r \rfloor _n})\cdots(x_{\lceil r \rceil ^n}^{l_3}-x_{\lfloor r \rfloor _n}^{l_3})(x_{\lceil u \rceil ^n}^{l_2}-x_{\lfloor u \rfloor _n}^{l_2})-g(z_{\lfloor r \rfloor _n})\cdots(x_{\lceil u \rceil ^n}^{l_3}-x_{\lfloor u \rfloor _n}^{l_3})(x_{\lceil u \rceil ^n}^{l_2}-x_{\lfloor u \rfloor _n}^{l_2})\Big]\\
		+&\cdots\\
		+&\Big[g(z_{\lfloor r \rfloor _n})(x_{\lceil u \rceil ^n}^{l_w}-x_{\lfloor u \rfloor _n}^{l_w})\cdots(x_{\lceil u \rceil ^n}^{l_2}-x_{\lfloor u \rfloor _n}^{l_2})-g(z_{\lfloor u \rfloor _n})(x_{\lceil u \rceil ^n}^{l_w}-x_{\lfloor u \rfloor _n}^{l_w})\cdots(x_{\lceil u \rceil ^n}^{l_2}-x_{\lfloor u \rfloor _n}^{l_2})\Big]\\
		=&:I_1+I_2\cdots+I_w.
		\end{align*}
	}
	We analyze each of them by
	\begin{align*}
	|I_1|&\le C\|x^{l_w}\|_\beta\cdots\|x^{l_2}\|_\beta\bigg(\frac{T}{n}\bigg)^{\beta(w-2)}\Big[(\lceil r \rceil ^n - \lceil u \rceil ^n)^\beta+(\lfloor r \rfloor _n - \lfloor u \rfloor _n)^\beta\Big],\\
	|I_2|&\le C\|x^{l_w}\|_\beta\cdots\|x^{l_2}\|_\beta\bigg(\frac{T}{n}\bigg)^{\beta(w-2)}\Big[(\lceil r \rceil ^n - \lceil u \rceil ^n)^\beta+(\lfloor r \rfloor _n - \lfloor u \rfloor _n)^\beta\Big],\\
	\cdots\\
	|I_w|&\le 
	C\|x^{l_w}\|_\beta\cdots\|x^{l_2}\|_\beta\bigg(\frac{T}{n}\bigg)^{\beta(w-1)}\|z\|_{s,t,\beta'}(\lfloor r \rfloor _n - \lfloor u \rfloor _n)^{\beta'}.
	\end{align*}
	Combining Lemma \ref{fcalculus2} and arguments in Lemma \ref{contro1}, we conclude the proof.
\end{proof}

\begin{proposition}\label{prop3}
	If $V\in \mathcal{C}^{N}_b(\mathbb{R}^m;\mathbb{R}^{m\times d})$ and $1/2<\beta<H$, then for any $n\in \mathbb{N_+}$,
	\begin{align*}
	\|Y^{n}\|_{\beta}&\le C(N,V,\beta,T)\max\bigg\{\|X\|_{\beta},\|X\|^{N-1+1/\beta}_{\beta}\bigg\},\\
	\|Y^{n}\|_{\infty}&\le |y|+C(N,V,\beta,T)\max\bigg\{\|X\|_{\beta},\|X\|^{N-1+1/\beta}_{\beta}\bigg\}.
	\end{align*}
	
	Moreover, for some $C_0>0$ and $0\le s<t\le T$ such that $\|X\|_\beta|t-s|^\beta\le C_0$, the estimate can be improved to
	\begin{align*}
	\|Y^{n}\|_{s,t,\beta}\le C(N,V,\beta,T,C_0)\|X\|_{\beta}.
	\end{align*}
\end{proposition}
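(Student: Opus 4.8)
The plan is to mimic the argument for Proposition \ref{varRK}, working first on grid points and then extending to arbitrary $0\le s<t\le T$. First I would fix $n\in\mathbb{N_+}$ and take $s=\lceil s\rceil^n$, $t=\lceil t\rceil^n$. From the continuous version \eqref{cNEuler} one writes
\begin{align*}
Y^n_t-Y^n_s=\int_s^t\sum_{w=1}^{N}\sum_{l_w,\dots,l_1}\mathscr{V}_{l_w}\cdots\mathscr{V}_{l_1}I(Y^n_{\lfloor r\rfloor_n})\frac{\big(X^{l_w}_{\lceil r\rceil^n}-X^{l_w}_{\lfloor r\rfloor_n}\big)\cdots\big(X^{l_2}_{\lceil r\rceil^n}-X^{l_2}_{\lfloor r\rfloor_n}\big)}{N!}\,dX^{l_1}_r.
\end{align*}
Since $V\in\mathcal{C}^N_b$, each coefficient function $y\mapsto\mathscr{V}_{l_w}\cdots\mathscr{V}_{l_1}I(y)$ lies in $\mathcal{C}^1_b(\mathbb{R}^m;\mathbb{R})$, so Lemma \ref{contro2} applies to each summand with $g=\mathscr{V}_{l_w}\cdots\mathscr{V}_{l_1}I$, $x^l=X^l$, $z=Y^n$, $\beta'=\beta$ (here I would use $\|Y^n\|_{\beta,n}<\infty$ from the preceding lemma, so that $\|Y^n\|_{s,t,\beta,n}$ is a legitimate finite quantity, and $\beta+\beta>1$). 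This yields, summing over $w$ and the finitely many index tuples,
\begin{align*}
\big|Y^n_t-Y^n_s\big|\le C(N,V,\beta,T)\Big(1+\|Y^n\|_{s,t,\beta,n}(t-s)^\beta\Big)\sum_{w=1}^{N}\|X\|_\beta^{\,w}(t-s)^{\beta w}.
\end{align*}

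Next I would divide by $(t-s)^\beta$ and, assuming without loss that we only test pairs with $(t-s)\le T$ so that the powers $(t-s)^{\beta(w-1)}$ are bounded, absorb the sum over $w$ into a single constant times $\max\{\|X\|_\beta,\|X\|_\beta^N\}$. Taking the supremum over grid pairs $u,v$ with $s\le u<v\le t$, $u=\lceil u\rceil^n$, $v=\lceil v\rceil^n$, I get the self-referential bound
\begin{align*}
\|Y^n\|_{s,t,\beta,n}\le C_1\Big(1+\|Y^n\|_{s,t,\beta,n}(t-s)^\beta\Big)\max\{\|X\|_\beta,\|X\|_\beta^N\}
\end{align*}
for a constant $C_1=C_1(N,V,\beta,T)$. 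Now I would run exactly the bootstrap from the proof of Proposition \ref{varRK}: if $n$ is large enough one picks $N_0\in\mathbb{N_+}$ and a grid length $N_1=N_0h$ comparable to $\big(2C_1\max\{\|X\|_\beta,\|X\|_\beta^N\}\big)^{-1/\beta}$, so that on each block of length $N_1$ the factor $C_1(t-s)^\beta\max\{\|X\|_\beta,\|X\|_\beta^N\}\le\frac12$, giving $\|Y^n\|_{s,t,\beta,n}\le 2C_1\max\{\|X\|_\beta,\|X\|_\beta^N\}$ on each block. Concatenating the $\lceil T/N_1\rceil+1$ blocks, using $\beta$-superadditivity of increments as in Proposition \ref{varRK}, produces a factor $(T/N_1+1)^{1-\beta}\le C(\|X\|_\beta^{(N-1)(1/\beta-1)}+\cdots+1)$ roughly of size $\max\{1,\|X\|_\beta^{(N-1)(1-\beta)/\beta}\}$. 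Multiplying through, the exponent on $\|X\|_\beta$ in the worst case becomes $N+\frac{(N-1)(1-\beta)}{\beta}$; but one checks that for $\|X\|_\beta$ small (the relevant regime) the dominant contribution is $\|X\|_\beta$ itself, while for $\|X\|_\beta$ large the exponent collapses to $N-1+\frac1\beta$ — this matching of exponents is precisely the bookkeeping one must do carefully to land on $\max\{\|X\|_\beta,\|X\|_\beta^{N-1+1/\beta}\}$. For the small-$n$ case $n<2T\big(2C_1\max\{\|X\|_\beta,\|X\|_\beta^N\}\big)^{1/\beta}$, I would instead bound each single-step increment directly from \eqref{cNEuler}, using $|dX^{l_1}_r|$-type estimates and $|\Delta X_k|\le\|X\|_\beta h^\beta$, producing a power of $\|X\|_\beta$ of order $1/\beta$ times lower-order corrections, again dominated by $\|X\|_\beta^{N-1+1/\beta}$.

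Finally, to pass from grid points to general $0\le s<t\le T$ with $t\in(t_k,t_{k+1}]$, I would split $Y^n_t-Y^n_s=(Y^n_t-Y^n_{t_k})+(Y^n_{t_k}-Y^n_{\lceil s\rceil^n})$, bound the first difference by a single application of Lemma \ref{contro2} on $[t_k,t]$ (noting $\|Y^n\|_{\beta,n}$ enters only through the already-controlled block estimate) and the second by the grid bound just obtained; this gives $\|Y^n\|_{s,t,\beta}\le C\max\{\|X\|_\beta,\|X\|_\beta^{N-1+1/\beta}\}$ and hence the claimed bounds on $\|Y^n\|_\beta$ and, by integrating increments from $0$, on $\|Y^n\|_\infty$. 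The improved local estimate for pairs with $\|X\|_\beta|t-s|^\beta\le C_0\in(0,1/C_1)$ is immediate from the self-referential inequality: there $C_1(t-s)^\beta\max\{\|X\|_\beta,\|X\|_\beta^N\}\le C_0 C_1<1$ can be absorbed, leaving $\|Y^n\|_{s,t,\beta}\le C(N,V,\beta,T,C_0)\|X\|_\beta$. The main obstacle I anticipate is not any single estimate but the exponent bookkeeping in the concatenation step: one has $N$ nonlinear terms of different homogeneities in $\|X\|_\beta$, and showing that the block-counting factor combines with them to yield exactly the exponent $N-1+1/\beta$ (rather than something larger) requires treating the regimes $\|X\|_\beta\lessgtr 1$ separately and checking that the worst case is the one claimed.
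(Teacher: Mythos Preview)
Your overall architecture matches the paper's proof: apply Lemma \ref{contro2} on grid intervals, derive a self-referential inequality for $\|Y^n\|_{s,t,\beta,n}$, close it on short blocks, concatenate, then handle off-grid points. The gap is in the step you yourself flag as the obstacle, and your proposed resolution does not work.

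By absorbing $\sum_{w=1}^{N}\|X\|_\beta^{w}(t-s)^{\beta(w-1)}$ into $\max\{\|X\|_\beta,\|X\|_\beta^{N}\}$ \emph{before} choosing the block length, you are forced to take $N_1$ comparable to $\big(C_1\max\{\|X\|_\beta,\|X\|_\beta^{N}\}\big)^{-1/\beta}$. For $\|X\|_\beta\ge 1$ this gives $T/N_1\sim\|X\|_\beta^{N/\beta}$, a block estimate $\sim\|X\|_\beta^{N}$, and after concatenation an exponent $N/\beta$, which is strictly larger than $N-1+1/\beta$ for every $\beta<1$ and $N\ge 2$. (Your intermediate claim that the concatenation factor is $\|X\|_\beta^{(N-1)(1-\beta)/\beta}$ is a miscount; it is $\|X\|_\beta^{N(1-\beta)/\beta}$.) The same premature absorption breaks your argument for the improved local estimate: the hypothesis is $\|X\|_\beta|t-s|^\beta\le C_0$, which does \emph{not} bound $\max\{\|X\|_\beta,\|X\|_\beta^{N}\}(t-s)^\beta$ when $\|X\|_\beta$ is large, so you cannot absorb the right-hand side.

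The paper's fix is simple but essential: do \emph{not} collapse the polynomial. Keep the self-referential inequality in the form
\[
\|Y^n\|_{s,t,\beta,n}\le C_1\sum_{w=1}^{N}\|X\|_\beta^{w}(t-s)^{\beta(w-1)}\big[1+\|Y^n\|_{s,t,\beta,n}(t-s)^\beta\big],
\]
and choose $N_1$ so that $C_1\|X\|_\beta N_1^\beta\le \frac{1}{2N}$, i.e.\ $N_1\sim(2NC_1\|X\|_\beta)^{-1/\beta}$, depending only on $\|X\|_\beta$. Then each term satisfies $\|X\|_\beta^{w}N_1^{\beta(w-1)}=\|X\|_\beta(\|X\|_\beta N_1^\beta)^{w-1}\le C\|X\|_\beta$, so the block estimate is \emph{linear} in $\|X\|_\beta$, and concatenation over $\sim\|X\|_\beta^{1/\beta}$ blocks gives $\|Y^n\|_{\beta,n}\le C\|X\|_\beta^{1/\beta}$ in the large-$n$ regime. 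The exponent $N-1+1/\beta$ then enters only through the crude small-$n$ estimate and the off-grid boundary term $C\|X\|_\beta^{N}$. The same un-collapsed inequality also yields the improved local bound directly: under $\|X\|_\beta|t-s|^\beta\le C_0\in(0,(C_1N)^{-1})$ one has $C_1\sum_w\|X\|_\beta^{w}(t-s)^{\beta w}\le C_1\sum_w C_0^{w}\le NC_1C_0<1$, which can be absorbed.
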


\begin{proof}
	Take $s=\lceil s \rceil^n$ and $t=\lceil t \rceil^n$. Lemma \ref{contro2} yields
	\begin{align*}
	\big|Y^n_t-Y^n_s\big|
	\le &C(N,V,\beta,T)\sum_{w=1}^{N}\|X\|_\beta^w(t-s)^{\beta w}\big[1+\|Y^n\|_{s,t,\beta,n}(t-s)^\beta\big]\\
	=&:C_1 \sum_{w=1}^{N}\|X\|_\beta^w(t-s)^{\beta w}\big[1+\|Y^n\|_{s,t,\beta,n}(t-s)^\beta\big],
	\end{align*}
	since $\|X^{l_w}\|_\beta\cdots\|X^{l_1}\|_\beta \le \|X\|_\beta^w$. 
	Dividing both sides by $(t-s)^\beta$, we have
	\begin{align*}
	\|Y^n\|_{s,t,\beta,n}
	&\le C_1 \sum_{w=1}^{N}\|X\|_\beta^w(t-s)^{\beta (w-1)}\big[1+\|Y^n\|_{s,t,\beta,n}(t-s)^\beta\big].
	\end{align*}
	
	If $n\ge 2T(2NC_1 \|X\|_\beta)^{1/\beta}$, then there exist $N_0\in \mathbb{N_+}$ and $N_1=\frac{N_0T}{n}$ such that 
	$$(2NC_1 \|X\|_\beta)^{-1/\beta}\le 2N_1 \le 2(2NC_1 \|X\|_\beta)^{-1/\beta}.$$
	When $t-s=N_1$, considering the choice for $N_1$, we get 
	\begin{align*}
	C_1 \|X\|_\beta^w(t-s)^{\beta w}\le \frac{1}{2N},\quad \forall ~w=1,\cdots,N.
	\end{align*}
	So 
	\begin{align*}
	\|Y^n\|_{s,t,\beta,n} \le 2C_1\sum_{w=1}^{N}\|X\|_\beta^w(t-s)^{\beta (w-1)}.
	\end{align*}
	When $t-s>N_1$, 
	\begin{align*}
	\|Y^n\|_{s,t,\beta,n} 
	&\le C \bigg(\bigg[\frac{t-s}{N_1}\bigg]+1\bigg)\sup_{r=\lceil r \rceil ^n\le t_{n-1}}\|Y^n\|_{r,r+N_1,\beta,n}\frac{N_1^\beta}{(t-s)^\beta}.
	\end{align*}
	So we have that if $n\ge 2T(2NC_1 \|X\|_\beta)^{1/\beta}$,
	\begin{align*}
	\|Y^n\|_{s,t,\beta,n} 
	&\le C\max\bigg\{\|X\|_{\beta},\|X\|^{N-1+1/\beta}_{\beta}\bigg\}.
	\end{align*}
	If $n< 2T(2NC_1 \|X\|_\beta)^{1/\beta}$, the definition of $Y^n$ leads to
	\begin{align*}
	\|Y^n\|_{s,t,\beta,n}
	\le C\|X\|_{\beta}^{1/\beta}.
	\end{align*}
	
	Now we can take any $0\le s<t\le T$, 
	\begin{align*}
	\|Y^n\|_{s,t,\beta}
	\le C\|X\|^N_{\beta}+\|Y^n\|_{\lceil s \rceil ^n,\lfloor t \rfloor _n,\beta,n}
	\le C\max\bigg\{\|X\|_{\beta},\|X\|^{N-1+1/\beta}_{\beta}\bigg\}.
	\end{align*}
	Therefore, 
	\begin{align*}
	\|Y^{n}\|_{\beta}&\le C(c,\mathbf{s},V,\beta,T)\max\bigg\{\|X\|_{\beta},\|X\|^{N-1+1/\beta}_{\beta}\bigg\},\\
	\|Y^{n}\|_{\infty}&\le |y|+C(c,\mathbf{s},V,\beta,T)\max\bigg\{\|X\|_{\beta},\|X\|^{N-1+1/\beta}_{\beta}\bigg\}.
	\end{align*}
	
	Moreover, if $C_0 \in (0,(C_1N)^{-1})$, then for any $0\le s<t\le T$ such that  $\|X\|_\beta|t-s|^\beta\le C_0$, it can be improved to
	\begin{align*}
	\|Y^{n}\|_{s,t,\beta}\le C(N,V,\beta,T,C_0)\|X\|_{\beta}.
	\end{align*}

\end{proof}

\begin{remark}\label{Fernique}
	Note that Fernique's lemma implies that $\big\|\|X\|^N_{\beta}\big\|_{L^p(\Omega)}<\infty$ for any $p\ge 1$ and $N\in\mathbb{N_+}$ (see e.g. \cite[Remark 3.2]{HuEuler}).
\end{remark}

\section{Strong convergence rate}\label{sec4}
\subsection{Runge--Kutta methods}\label{sec4.1}

The order conditions on coefficients of Runge--Kutta methods are derived in this section to ensure the strong convergence rate is $2H-\frac{1}{2}$. For simplicity, we omit the range of indices in summations if it is not confusing.

\begin{align*}
Y_t-Y^n_t=&\bigg[\int_{0}^{t}V(Y_s)dX_s-\int_{0}^{t}V(Y^n_s)dX_s\bigg]\\
&+\bigg[\int_{0}^{t}V(Y^n_s)dX_s-\int_{0}^{t}\sum_{i}b_iV(Y^n_{\lceil s \rceil ^n,i})dX_s\bigg]\\
=&:L_t+R_t.
\end{align*}

For the first term, the Taylor expansion yields
\begin{align*}
L_t&=\int_{0}^{t}V(Y_s)dX_s-\int_{0}^{t}V(Y^n_s)dX_s\\
&=\sum_{l=1}^d\int_{0}^{t} \int_{0}^{1}
\nabla V_l(\theta Y_s+(1-\theta)Y^n_s) (Y_s-Y^{n}_s) d\theta dX^l_s.
\end{align*}

For the second term, fix any $t=\lceil t \rceil^n$, we have
\begin{align*}
R_t&=\int_{0}^{t}V(Y^n_s)dX_s-\int_{0}^{t}\sum_{i}b_iV(Y^n_{\lceil s \rceil ^n,i})dX_s\\
&=\sum^{nt/T-1}_{k=0}\int_{t_k}^{t_{k+1}}\bigg[V(Y^n_s)-\sum_{i}b_iV(Y^n_{t_{k+1},i})\bigg]dX_s.
\end{align*}
For any $i=1,\cdots,\mathbf{s}$, denote by $Y^{n,q}_t$ and $Y^{n,q}_{t,i}$ the $q$-th component of $Y^{n}_t$ and $Y^{n}_{t,i}$, respectively, $q=1,\cdots,m$.  We apply the Taylor expansion to $V(Y^n_{t_{k+1},i})$ at $Y^n_{t_{k}}$ or $Y^n_{t_{k+1}}$, then
\begin{align*}
V(Y^n_{t_{k+1},i})&=V(Y^n_{t_{k}})+\int_{0}^{1}\sum_{q}\partial_q V(\theta Y^n_{t_{k+1},i}+(1-\theta)Y^n_{t_{k}})(Y^{n,q}_{t_{k+1},i}-Y^{n,q}_{t_{k}}) d\theta\\
&=V(Y^n_{t_{k+1}})+\int_{0}^{1}\sum_{q}\partial_q V(\theta Y^n_{t_{k+1},i}+(1-\theta)Y^n_{t_{k+1}})(Y^{n,q}_{t_{k+1},i}-Y^{n,q}_{t_{k+1}}) d\theta,
\end{align*}
where $\partial_q$ denotes the partial differential operator with respect to the $q$-th variable. 
Assume $\eta\in [0,1]$, then for any $s\in(t_k,t_{k+1}]$
\begin{align*}
&V(Y^n_s)-\sum_{i}b_iV(Y^n_{t_{k+1},i})\\
=&\eta\bigg[ V(Y^n_s)-\sum_{i}b_iV(Y^n_{t_{k}})\bigg]\\
&-\eta \sum_{i,q}b_i\int_{0}^{1}\partial_q V(\theta Y^n_{t_{k+1},i}+(1-\theta)Y^n_{t_{k}})(Y^{n,q}_{t_{k+1},i}-Y^{n,q}_{t_{k}}) d\theta\\
&+(1-\eta)\bigg[ V(Y^n_s)-\sum_{i}b_iV(Y^n_{t_{k+1}})\bigg]\\
&-(1-\eta)\sum_{i,q}b_i\int_{0}^{1}\partial_q V(\theta Y^n_{t_{k+1},i}+(1-\theta)Y^n_{t_{k+1}})(Y^{n,q}_{t_{k+1},i}-Y^{n,q}_{t_{k+1}}) d\theta\\
=&:\eta R^1_s+\eta R^2_s+(1-\eta)R^3_s+(1-\eta)R^4_s.
\end{align*}
Since \eqref{schemec} implies
\begin{align*}
R^1_s&=V(Y^n_s)-\sum_{i}b_iV(Y^n_{t_{k}})\\ 
&=\bigg[ V(Y^n_{t_k}) +\int_{0}^1\sum_{q}\partial_q V(\theta Y^n_s+(1-\theta)Y^n_{t_k})(Y^{n,q}_s-Y^{n,q}_{t_k})d\theta  \bigg]
-\sum_{i}b_iV(Y^n_{t_{k}})\\
&=\bigg[ V(Y^n_{t_k}) +\sum_{q,i}\partial_q V(Y^n_{t_k})\int_{t_k}^{s}b_iV^q(Y^n_{t_{k+1},i})dX_u+E^1_s \bigg]
-\sum_{i}b_iV(Y^n_{t_{k}}),\\
\end{align*}
we propose the first condition that $\sum_{i=1}^{\mathbf{s}}b_i=1$. Then
\begin{align}\label{R1}
\int_{t_k}^{t_{k+1}}R^1_sdX_s=\int_{t_k}^{t_{k+1}}\sum_{q,i} \partial_q V(Y^n_{t_k})\int_{t_k}^{s}b_iV^q(Y^n_{t_{k+1},i})dX_udX_s
+\int_{t_k}^{t_{k+1}}E^1_sdX_s
\end{align}
with $E^1$ denoting the remainder term of $R^1$, so as $E^2$, $E^3$, $E^4$ in the following analysis.
Similarly, 
\begin{align}\label{R3}
\int_{t_k}^{t_{k+1}}R^3_sdX_s=-\int_{t_k}^{t_{k+1}}\sum_{q,i} \partial_q V(Y^n_{t_{k+1}})\int_{s}^{t_{k+1}}b_iV^q(Y^n_{t_{k+1},i})dX_udX_s
+\int_{t_k}^{t_{k+1}}E^3_sdX_s.
\end{align}
For $R^2_s$ and $R^4_s$, it follows from \eqref{middle} and \eqref{scheme} that  
\begin{align*}
Y^n_{t_{k+1},i}-Y^n_{t_{k}}&=\sum_{j}a_{ij}V(Y^n_{t_{k+1},j})\Delta X_k,\\
Y^n_{t_{k+1},i}-Y^n_{t_{k+1}}&=-\sum_{j}b_jV(Y^n_{t_{k+1},j})\Delta X_k
+\sum_{j}a_{ij}V(Y^n_{t_{k+1},j})\Delta X_k.\\
\end{align*}
Then,
\begin{align*}
\int_{t_k}^{t_{k+1}}R^2_sdX_s=&-\int_{t_k}^{t_{k+1}}\sum_{i,q,j}b_i \partial_q V(Y^n_{t_{k+1},i})\bigg[\int_{t_k}^{t_{k+1}}a_{ij}V^q(Y^n_{t_{k+1},j})dX_u\bigg]dX_s\\
&+\int_{t_k}^{t_{k+1}}E^2_sdX_s,\\
\int_{t_k}^{t_{k+1}}R^4_sdX_s=&-\int_{t_k}^{t_{k+1}}\sum_{i,q,j}b_i \partial_q V(Y^n_{t_{k+1},i})\bigg[\int_{t_k}^{t_{k+1}}(a_{ij}-b_j)V^q(Y^n_{t_{k+1},j})dX_u\bigg]dX_s\\
&+\int_{t_k}^{t_{k+1}}E^4_sdX_s.
\end{align*}

Taking the Taylor expansion to both $V(Y^n_{t_{k+1},i})$ and $V(Y^n_{t_{k+1},j})$ at $Y^n_{t_{k}}$ in above two expressions and choosing $\eta=\frac{1}{2}$, we propose another condition $\sum^{\mathbf{s}}_{i=1}b_i(\sum_{j=1}^{\mathbf{s}}b_j-2a_{ij})=0$ such that terms contain $2$nd-level iterated integrals of $X$ in the form of the L\'evy area type processes \eqref{multiple} vanish. Therefore, the leading term of $R_t$ only appears in \eqref{R1} and \eqref{R3}, which contains $2$nd-level iterated integrals of $X$ in the form of \eqref{levyB}-\eqref{levyt}. Using the Tayor expansion again to $V(Y^n_{t_k})$ and $V(Y^n_{t_{k+1}})$ at $Y^n_{t_{k+1},i}$ in \eqref{R1} and \eqref{R3}, we obtain the leading term of $R_t$ is 
\begin{align}\label{Rtleading}
R^{lead}_t=\frac{1}{2}\sum_{k=0}^{nt/T-1}\sum_{i,q,l,l'}b_i&\bigg[\int_{t_k}^{t_{k+1}}\int_{t_k}^{s} \partial_q V_l(Y^n_{t_{k+1},i})V^q_{l'}(Y^n_{t_{k+1},i})dX^{l'}_udX^l_s\\
&-\int_{t_k}^{t_{k+1}}\int_{s}^{t_{k+1}}  \partial_q V_l(Y^n_{t_{k+1},i})V^q_{l'}(Y^n_{t_{k+1},i})dX^{l'}_udX^l_s\bigg].
\end{align}
The remainder $R_t-R^{lead}_t$
contains $3$rd-level iterated integrals of $X$ in each interval $(t_k,t_{k+1}]$ in \eqref{multiple}.

\begin{theorem}\label{theorem1}
	Suppose $V\in \mathcal{C}^{3}_b(\mathbb{R}^m;\mathbb{R}^{m\times d})$ and $H>1/2$. Denote $c_i=\sum^{\mathbf{s}}_{j=1}a_{ij}$. If it holds that
	\begin{align}\label{condition}
	\sum^{\mathbf{s}}_{i=1}b_i=1\quad {\rm and}\quad  \sum^{\mathbf{s}}_{i=1}b_ic_{i}=1/2,
	\end{align}
	then the strong convergence rate of Runge--Kutta method for \eqref{sde} is $2H-\frac{1}{2}$. More precisely, there exists a constant $C$ independent of $n$ such that
	\begin{align}\label{strong}
	\bigg\|\sup_{t\in[0,T]}|Y_t-Y^n_t|\bigg\|_{L^p(\Omega)}\le C h^{2H-\frac{1}{2}},\quad p\ge 1,
	\end{align}
	where $h=\frac{T}{n}$ and $Y^n_t$ is defined by \eqref{schemec}.
\end{theorem}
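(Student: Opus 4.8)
The plan is to build on the decomposition $Y_t-Y^n_t=L_t+R_t$ recorded before the theorem: bound $\|\sup_{t}|R_t|\|_{L^{2p}(\Omega)}$ by $Ch^{2H-1/2}$, and then absorb $L_t$ --- which is linear in $Y_\cdot-Y^n_\cdot$ --- by a pathwise Gronwall argument whose random constant has all moments. I would first note that the two order conditions used in the derivation above, $\sum_ib_i=1$ and $\sum_ib_i(\sum_jb_j-2a_{ij})=0$, are exactly \eqref{condition} (the second reduces, via $\sum_jb_j=1$ and $c_i=\sum_ja_{ij}$, to $\sum_ib_ic_i=\frac12$), so the reduction above applies verbatim and gives, for $t=\lceil t\rceil^n$, $R_t=R^{lead}_t+R^{rem}_t$ with $R^{lead}_t$ as in \eqref{Rtleading}, while $R^{rem}_t$ collects (i) the Taylor remainders $\int_{t_k}^{t_{k+1}}E^j_s\,dX_s$; (ii) third-level iterated integrals of $X$ over single intervals against $\mathcal{C}^1_b$-coefficients --- arising both from the explicit third-level terms and from re-expanding, in \eqref{R1}, \eqref{R3} and in $R^{lead}_t$ itself, the coefficient $\partial_qV_l(Y^n_{t_{k+1},i})V^q_{l'}(Y^n_{t_{k+1},i})$ around the left node $Y^n_{t_k}$ (the correction being a smooth function of the stage values times $\Delta X_k$ times a second-level object, i.e.\ a third-level object); and (iii), for general $t$, a last partial-interval contribution.

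The core estimate is for $R^{lead}$. After the re-expansion just mentioned, its principal part is $\sum_kf_{t_k}\xi_{n,k}$ with $f_{t_k}=\partial_qV_l(Y^n_{t_k})V^q_{l'}(Y^n_{t_k})$ and $\xi_{n,k}$ the antisymmetrised second-level increment of \eqref{levyB}--\eqref{levyt}. By Proposition \ref{varRK}, Fernique's estimate (Remark \ref{Fernique}) and $V\in\mathcal{C}^3_b$, the process $f$ is $\beta$-H\"older in every $L^{2p}(\Omega)$ with a constant uniform in $n$, and by Lemma \ref{multi} the partial sums $\sum_{k=i}^{j-1}\xi_{n,k}$ satisfy $\|\cdot\|_{L^{2p}}\le C|t_j-t_i|^{1/2}h^{2H-1/2}$ (the slots carrying $X^1=t$ contribute $h^{H+1/2}$, which is smaller for $H<1$, and two such slots cancel identically). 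Lemma \ref{trans}, applied after rescaling $\xi_{n,k}$ by $h^{-(2H-1/2)}$, then yields $\|R^{lead}_{t_j}-R^{lead}_{t_i}\|_{L^{2p}}\le C|t_j-t_i|^{1/2}h^{2H-1/2}$ on the grid; passing to the piecewise-linear interpolation, whose increments inherit this bound for all $s<t$, and invoking the Garsia-type Lemma \ref{Besov} with an exponent just below $\frac12$ gives $\|\sup_t|R^{lead}_t|\|_{L^{2p}}\le Ch^{2H-1/2}$.

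The term $R^{rem}$ I would bound pathwise. Its third-level terms are handled by \eqref{multiple} with $N'=3$ (where $r$ equals $2H$ or $2$, in all cases $\ge2H>2H-\frac12$) together with Lemma \ref{trans}, or crudely by $\|X\|_\beta^3h^{3\beta}$ per interval, i.e.\ $\|X\|_\beta^3h^{3\beta-1}$ after summation; the Taylor remainders $\int E^j\,dX$ are, by Proposition \ref{varRK} and Young's inequality, of size $\|X\|_\beta^3h^{3\beta}$ per interval; the partial-interval term is $O(\|X\|_\beta^2h^{2\beta})$, using $\sum_ib_i=1$. Choosing $\beta$ close to $H$ (any $\beta\in(\frac{4H+1}{6},H)$ works, using $H>\frac12$) makes $3\beta-1>2H-\frac12$ and $2\beta>2H-\frac12$, and since each $\|X\|_\beta^k$ lies in every $L^q(\Omega)$ by Remark \ref{Fernique}, we obtain $\|\sup_t|R^{rem}_t|\|_{L^{2p}}\le Ch^{2H-1/2}$, hence $\|\sup_t|R_t|\|_{L^{2p}}\le Ch^{2H-1/2}$.

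Finally, writing $Z:=Y-Y^n$, equation \eqref{sde} and \eqref{schemec} give $Z_t=\sum_l\int_0^tA^l_sZ_s\,dX^l_s+R_t$ with $A^l_s=\int_0^1\nabla V_l(\theta Y_s+(1-\theta)Y^n_s)\,d\theta$ bounded together with its H\"older norm (by $V\in\mathcal{C}^3_b$, Lemma \ref{well} and Proposition \ref{varRK}). Splitting $[0,T]$ into $O(\|X\|_\beta^{1/\beta})$ subintervals on each of which the Young integral against $AZ$ is a contraction, and iterating, gives the pathwise bound $\sup_t|Z_t|\le C\,2^{c\|X\|_\beta^{1/\beta}}\sup_t|R_t|$; since $2\beta>1$, the random constant $2^{c\|X\|_\beta^{1/\beta}}$ is in every $L^q(\Omega)$ by Fernique, so H\"older's inequality gives $\|\sup_t|Y_t-Y^n_t|\|_{L^p}\le C\|\sup_t|R_t|\|_{L^{2p}}\le Ch^{2H-1/2}$, which is \eqref{strong}. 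I expect the main difficulty to be the bookkeeping inside $R^{rem}$: one must check that every term produced by the successive Taylor expansions is either an \emph{antisymmetrised} second-level increment (controlled at rate $2H-\frac12$ by \eqref{levyB} rather than at the rate $2H-1$ of an un-antisymmetrised one) or a genuinely third-order object, and it is precisely conditions \eqref{condition} that force the problematic un-antisymmetrised second-level contributions in $R^2,R^4$ to cancel.
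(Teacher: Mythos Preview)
Your decomposition of $R_t$ and the treatment of $R^{lead}_t$ and $R^{rem}_t$ via Lemmas \ref{multi}, \ref{trans} and Garsia closely track the paper, and you correctly identify that \eqref{condition} is exactly what kills the un-antisymmetrised second-level contributions. The gap is in your final step. The pathwise inequality $\sup_t|Z_t|\le C\,2^{c\|X\|_\beta^{1/\beta}}\sup_t|R_t|$ does \emph{not} hold for Young-driven linear equations: any contraction/iteration on small intervals must control a H\"older seminorm $\|R\|_{s,t,\beta'}$ (some $\beta'>1-\beta$) of the forcing, not merely its supremum, because the Young estimate for $\int_s^uA_rZ_r\,dX_r$ involves $\|Z\|_{s,u,\beta}$, and closing this feeds back $\|R\|_{s,u,\beta'}$. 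Pathwise one only has $\|R\|_{\beta'}=O(h^{2\beta-1})\approx O(h^{2H-1})$ (from crudely summing $n$ single-interval increments of size $h^{2\beta}$), so your argument as written delivers the Euler rate $2H-1$, not $2H-\tfrac12$.

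The paper avoids this by not decoupling the two steps. It writes the variation-of-constants formula $Y_t-Y^n_t=\Lambda^n_t\int_0^t\Gamma^n_s\,dR_s$, with $\Lambda^n,\Gamma^n$ the fundamental solutions attached to $A^l$ and all moments controlled as in \eqref{lambda}, and then applies Garsia (Lemma \ref{Besov}) directly to $f^n_t:=n^{2H-1/2}\int_0^t\Gamma^n_s\,dR_s$ rather than to $R$. The point is that $\|f^n_t-f^n_s\|_{L^q}$ can be bounded for \emph{all} $s<t$: one splits $\int_s^t\Gamma^n_u\,dR_u$ into the grid piece $\int_{\lceil s\rceil^n}^{\lfloor t\rfloor_n}\Gamma^n_{\lfloor u\rfloor_n}\,dR_u$ (rate $|t-s|^{1/2}$ via Lemma \ref{trans}), the partial-interval boundary pieces (pathwise $\le C\|X\|_\beta^2(\|\Gamma^n\|_\infty+\|\Gamma^n\|_\beta)h^{2\beta}\le C(\cdots)|t-s|^{1/2-2(H-\beta)}n^{-(2H-1/2)}$), and a third-level correction $\int\int d\Gamma^n\,dR$. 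Choosing $\beta>H-\tfrac{1}{2q}$ makes the Garsia integral converge. In short, the $L^p$ cancellations from Lemma \ref{multi} must be carried \emph{inside} the linearisation, not discharged into a $\sup|R|$ that is then fed into a purely pathwise Gronwall.
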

\begin{proof}
	Notice that condition \eqref{condition} ensures the expression of $R^{lead}$ in the form of \eqref{Rtleading}. Then Lemma \ref{multi} and Lemma \ref{trans} combined with Proposition \ref{varRK} lead to
	\begin{align}
	\big\|R^{lead}_t-R^{lead}_s\big\|_{L^p(\Omega)}\le C(t-s)^{\frac{1}{2}}h^{2H-\frac{1}{2}},\quad p\ge 1,~t=\lceil t \rceil^n,~s=\lceil s \rceil^n. \label{orderlead}
	\end{align}
	Similarly, based on \eqref{multiple}, we have 
	\begin{align*}
	\big\|(R_t-R^{lead}_t)-(R_s-R^{lead}_s)\big\|_{L^p(\Omega)}\le C(t-s)^{\frac{1}{2}}h^{2H},\quad p\ge 1,~t=\lceil t \rceil^n,~s=\lceil s \rceil^n.
	\end{align*}
	
	Next, for the estimate of $L_t$, recall that
	\begin{align*}
	L_t=\sum_{l}\int_{0}^{t} \int_{0}^{1}
	\nabla V_l(\theta Y_s+(1-\theta)Y^n_s) (Y_s-Y^{n}_s) d\theta dX^l_s=:\sum_{l}\int_{0}^{t} S^{l}_s (Y_s-Y^{n}_s) dX^l_s.
	\end{align*}
	We introduce two linear equations defined through $S^{l}_s$. Let matrices $\Lambda^n$ and $\Gamma^n$ satisfy the linear equations:
	\begin{align*}
	\Lambda^{n}_t&=I+\sum_{l}\int_{0}^{t}S^{l}_s\Lambda^{n}_{s}dX_s^l,\\
	\Gamma^{n}_{t}&=I-\sum_{l}\int_{0}^{t}\Gamma^{n}_{s}S^{l}_sdX_s^l,
	\end{align*}
	where $I\in \mathbb{R}^{m\times m}$ denotes the identity matrix. Using the chain rule, we know that $\Lambda^n \Gamma^n=I$.
	Applying Proposition \ref{varRK}, Remark \ref{Fernique} and \cite[Lemma 3.1 ($ii$)]{HuEuler}, we have 
	\begin{align}
	\max\bigg\{\|\|\Lambda^n\|_{\infty}\|_{L^p(\Omega)}, \|\|\Lambda^n\|_{\beta}\|_{L^p(\Omega)},
	\|\|\Gamma^n\|_{\infty}\|_{L^p(\Omega)}, \|\|\Gamma^n\|_{\beta}\|_{L^p(\Omega)}\bigg\}\le C,\quad p\ge 1.\label{lambda}
	\end{align}
	
	It can be verified that $Y_t-Y_t^n=\Lambda^n_t\int_{0}^{t}\Gamma^n_sdR_s$, 
	so the H\"older inequality impies that 
	\begin{align*}
	\|\|Y-Y^n\|_\infty  \|_{L^p(\Omega)}&\le 
	\bigg\|\|\Lambda^n\|_\infty \bigg\|\int_{0}^{\cdot}\Gamma^n_sdR_s\bigg\|_\infty \bigg\|_{L^p(\Omega)}\\
	&\le \|\|\Lambda^n\|_\infty \|_{L^{2p}(\Omega)}\bigg\|\bigg\|\int_{0}^{\cdot}\Gamma^n_sdR_s\bigg\|_\infty \bigg\|_{L^{2p}(\Omega)}
	,\quad p\ge 1.
	\end{align*}
	Let $f^n_t=n^{2H-\frac{1}{2}} \int_{0}^{t}\Gamma^n_sdR_s  $. It suffices to prove that $\|\|f^n\|_\infty \|_{L^{q}(\Omega)}\le C$, for any $q\ge 1$.
	If there exists some $k\in\{1,\cdots,n\}$ such that $0\le s<t_k<t\le T$, we decomposite  $\int_{s}^{t}\Gamma^n_udR_u  $ by 
	\begin{align*}
	\int_{s}^{t}\Gamma^n_udR_u&=\int^{\lceil s \rceil ^n}_{s}\Gamma^n_udR_u+\int_{\lceil s \rceil ^n}^{\lfloor t \rfloor _n}\Gamma^n_udR_u+\int_{\lfloor t \rfloor _n}^t\Gamma^n_udR_u\\
	&=\int^{\lceil s \rceil ^n}_{s}\Gamma^n_udR_u
	+\int_{\lceil s \rceil ^n}^{\lfloor t \rfloor _n}\Gamma^n_{\lfloor u \rfloor _n}dR_u
	+\int_{\lceil s \rceil ^n}^{\lfloor t \rfloor _n}  \int_{\lfloor u\rfloor _n}^{u}d\Gamma^n_v dR_u
	+\int_{\lfloor t \rfloor _n}^t\Gamma^n_udR_u.
	\end{align*}
	For the first term, combining the definitions of $Y^n_t$ and $R_t$, we have
	\begin{align*}
	\int^{\lceil s \rceil ^n}_{s}\Gamma^n_udR_u
	= \int^{\lceil s \rceil ^n}_{s} \Gamma^n_u 
	\Big[ V(Y^n_u)-\sum_{i} b_i V(Y^n_{\lceil u \rceil ^n,i})\Big]dX_u.
	\end{align*}
	By the Taylor expansion and the property of Young's integral (see e.g. \cite{Lyons}), for any $\frac12<\beta<H$,
	\begin{align*}
	\bigg|\int^{\lceil s \rceil ^n}_{s}\Gamma^n_udR_u\bigg|&\le 
	C(\beta,V,T)\|X\|_\beta^2(\|\Gamma^n\|_\infty+\|\Gamma^n\|_\beta)|\lceil s \rceil ^n-s|^{\beta}h^{\beta}\\
	&\le 
	C(\beta,V,T)\|X\|_\beta^2(\|\Gamma^n\|_\infty+\|\Gamma^n\|_\beta)|t-s|^{\frac12-2(H-\beta)}n^{-(2H-\frac12)}.
	\end{align*}
	For the second term, according to \eqref{orderlead}-\eqref{lambda} and Lemma \ref{trans}, we have 
	\begin{align*}
	\bigg\|n^{2H-\frac12}\int_{\lceil s \rceil ^n}^{\lfloor t \rfloor _n}\Gamma^n_{\lfloor u \rfloor _n}dR_u\bigg\|_{L^q(\Omega)}\le C |t-s|^{\frac12}.
	\end{align*}
	For the third term, combining the definitions of $\Gamma^n_t$ and $R_t$, we know that it contains the $3$rd-level iterated integrals of $X$, then
	\begin{align*}
	\bigg\|n^{2H}\int_{\lceil s \rceil ^n}^{\lfloor t \rfloor _n}  \int_{\lfloor u\rfloor _n}^{u}d\Gamma^n_v dR_u\bigg\|_{L^q(\Omega)}\le C |t-s|^{\frac12}.
	\end{align*}
	For the fourth term, using similar arguments as the first one, we have
	\begin{align*}
	\bigg|\int_{\lfloor t \rfloor _n}^t\Gamma^n_udR_u\bigg|\le 
	C(\beta,V,T)\|X\|_\beta^2(\|\Gamma^n\|_\infty+\|\Gamma^n\|_\beta)|t-s|^{\frac12-2(H-\beta)}n^{-(2H-\frac12)}.
	\end{align*}
	If $t_k\le s<t\le t_{k+1}$, it holds that
	\begin{align*}
	\bigg|\int_{s}^t\Gamma^n_udR_u\bigg|\le 
	C(\beta,V,T)\|X\|_\beta^2(\|\Gamma^n\|_\infty+\|\Gamma^n\|_\beta)|t-s|^{\frac12-2(H-\beta)}n^{-(2H-\frac12)}.
	\end{align*}
	Therefore, for any $0\le s<t \le T$ and $q\ge 1$, we obtain 
	\begin{align*}
	\|f^n_t-f^n_s\|_{L^q(\Omega)}\le C\big(|t-s|^{\frac{1}{2}}+|t-s|^{\frac12-2(H-\beta)}\big).
	\end{align*}
	
	If $q>4$, we take $\beta$ such that $\max\{\frac{1}{2},H-\frac{1}{2q}\}<\beta<H$ and take $\alpha=\frac{1}{2}-\frac{1}{q}$ such that $\alpha\in(\frac{1}{q},\frac12)$, then Lemma \ref{Besov} yields that 
	\begin{align*}
	\mathbb{E}\Big[\|f^n\|^q_\infty\Big]
	&\le T^{\alpha q-1}\mathbb{E}\Big[\|f^n\|^q_{\alpha-\frac{1}{q}}\Big]\\
	&\le C \int_{0}^{T} \int_{0}^{T} \frac{\mathbb{E}\big[|f^n_t-f^n_s|^q\big]}{|t-s|^{1+q\alpha}}dsdt\\
	&\le C \int_{0}^{T} \int_{0}^{T} \frac{|t-s|^{\frac{q}{2}}+|t-s|^{\frac{q}{2}-2q(H-\beta)}}{|t-s|^{1+q\alpha}}dsdt\\
	&\le C.
	\end{align*}
\end{proof}

\begin{remark}\label{com}
	If the noise is one-dimensional or the diffusion term satisfies the following commutative condition
	\begin{align*}
	\sum_{q}\partial_q V_l V^q_{l'}=\sum_{q}\partial_q V_{l'} V^q_{l},\quad 1<l<l'\le d,
	\end{align*}
	then Fubini's theorem shows
	\begin{align*}
	R^{lead}_t=\frac{1}{2}\sum_{k=0}^{nt/T-1}\sum_{i,q}\sum_{l\neq 1}b_i\bigg[\partial_q V_l(Y^n_{t_{k+1},i})V^q_{1}(Y^n_{t_{k+1},i})&-\partial_q V_1(Y^n_{t_{k+1},i})V^q_{l}(Y^n_{t_{k+1},i})\bigg]\\
	\bigg[\int_{t_k}^{t_{k+1}}\int_{t_k}^{s} dudX^l_s
	&-\int_{t_k}^{t_{k+1}}\int_{s}^{t_{k+1}}  dudX^l_s\bigg].
	\end{align*}
	As a result, the strong convergence rate in \eqref{strong} is $H+\frac12$ from \eqref{levyt}.
\end{remark}

\begin{remark}
	As $H$ goes to $\frac{1}{2}$, the rate tends to $\frac{1}{2}$ in Theorem \ref{theorem1} and to $1$ in Remark \ref{com}, respectively. This is consistent with classical results for SDEs driven by standard Brownian motions in Stratonovich sense.
\end{remark}

\begin{remark}
	If $d=1$ or both drift and diffusion terms satisfy the commutative condition
	\begin{align*}
	\sum_{q}\partial_q V_l V^q_{l'}=\sum_{q}\partial_q V_{l'} V^q_{l},\quad 1\le l< l'\le d,
	\end{align*}
	then $R^{lead}_t=0$ and the strong convergence rate is $2H$ from \eqref{multiple}. In other words, we recover the conditions for order $2$ for deterministic ODEs if we take $H=1$ formally when $X_t=t$.
\end{remark}

\begin{remark}
	For more general case, if the Hurst parameter of $X^i$ is $H_i$, $i=1,...,d$, satisfying $H_1\ge\cdots\ge H_d>\frac12$, Lemma \ref{multi} and Theorem \ref{theorem1} with some revisions imply that the strong convergence rate is $H_{d-1}+H_d-\frac{1}{2}$.
\end{remark}

\subsection{Simplified step-$N$ Euler schemes}\label{theo2}
For simplicity, we take $N=2$ in the following. Indeed, our approach gives the same strong convergence rate $2H-\frac12$ of simplified step-$N$ Euler schemes for $N\ge 2$.

Recall that the simplified step-$2$ Euler scheme is 
\begin{align}\label{step2}
Y^n_{t_{k+1}}&=Y^n_{t_{k}}+\sum_{l}V(Y^n_{t_k})\Delta X_k^{l}+\frac{1}{2}\sum_{l,l',q}\partial_q V_l(Y^n_{t_k})V^q_{l'}(Y^n_{t_k})\Delta X_k^{l'}\Delta X_k^{l}.
\end{align}
The corresponding continuous version is 
\begin{align}\label{cstep2}
Y^n_{t}&=y+\int_{0}^{t}V(Y^n_{\lfloor s \rfloor_n})dX_s+\frac{1}{2}\int_{0}^{t}\sum_{l,l',q}\partial_q V_l(Y^n_{\lfloor s \rfloor_n})V^q_{l'}(Y^n_{\lfloor s \rfloor_n})\Big(X^{l'}_{\lceil s \rceil ^n}-X_{\lfloor s \rfloor_n}^{l'}\Big)dX_s^{l}.
\end{align}

To gain a sharp convergence order of the simplified step-$2$ Euler scheme, we compare it with the following $2$-stage Runge--Kutta method (the Heun's method)
\begin{align}
Z^n_{t_{k+1},1}&= Z^n_{t_{k}},\label{Z1}\\
Z^n_{t_{k+1},2}&= Z^n_{t_{k}}+V(Z^n_{t_{k+1},1})\Delta X_k,\label{Z2}\\
Z^n_{t_{k+1}}&=Z^n_{t_{k}}+\frac{1}{2}V(Z^n_{t_{k+1},1})\Delta X_k+\frac{1}{2}V(Z^n_{t_{k+1},2})\Delta X_k,\label{Z3}
\end{align}
which satisfies condition \eqref{condition}. We introduce two similar stage values for 
$Y^n_{t_{k+1}}$:
\begin{align*}
Y^n_{t_{k+1},1}&= Y^n_{t_{k}},\\
Y^n_{t_{k+1},2}&= Y^n_{t_{k}}+V(Y^n_{t_{k+1},1})\Delta X_k.\\
\end{align*}
Notice that 
\begin{align*}
V_l(Y^n_{t_{k+1},2})=V_l(Y^n_{t_{k}})+\bigg[\int_{0}^{1}\sum_{q,l'}\partial_q V_l(\theta Y^n_{t_{k+1},2}+(1-\theta)Y^n_{t_{k}})d\theta\bigg]V^q_{l'}(Y^n_{t_k})\Delta X^{l'}_k
\end{align*}
and
\begin{align*}
&\partial_q V_l(\theta Y^n_{t_{k+1},2}+(1-\theta)Y^n_{t_{k}})\\
=&\partial_q V_l(Y^n_{t_k})\\
&+\sum_{q',l''}
\bigg[\int_{0}^{1}\partial_{q'}\partial_q V_l ( \theta' \big[\theta Y^n_{t_{k+1},2} +(1-\theta)Y^n_{t_{k}} \big] +(1-\theta')  Y^n_{t_{k}} )d\theta'\bigg]
(\theta V_{l''}^{q'}(Y_{t_k}^n)\Delta X_k^{l''})\\
=&:\partial_q V_l(Y^n_{t_k})+G^n_{q,l,k}(\theta).
\end{align*}
Therefore, 
\begin{align*}
&V_l(Y^n_{t_{k+1},2})\\
=&V_l(Y^n_{t_{k}})+\sum_{q,l'}\bigg[\partial_q V_l(Y^n_{t_k}) + \int_{0}^{1} G^n_{q,l,k}(\theta)d\theta \bigg] V^q_{l'}(Y^n_{t_k})\Delta X^{l'}_k\\
=&V_l(Y^n_{t_{k}})+\sum_{q,l'}\partial_q V_l(Y^n_{t_k}) V^q_{l'}(Y^n_{t_k})\Delta X^{l'}_k+ \sum_{q,l'}\bigg[\int_{0}^{1} G^n_{q,l,k}(\theta)d\theta \bigg] V^q_{l'}(Y^n_{t_k})\Delta X^{l'}_k\\
=&:V_l(Y^n_{t_{k}})+\sum_{q,l'}\partial_q V_l(Y^n_{t_k}) V^q_{l'}(Y^n_{t_k})\Delta X^{l'}_k- G^n_{l,t_{k+1}}.
\end{align*}

Scheme \eqref{step2} and its continuous version \eqref{cstep2} can be transformed as
\begin{align}
Y^n_{t_{k+1}}&=Y^n_{t_{k}}+\frac{1}{2}V(Y^n_{t_{k+1},1})\Delta X_k+\frac{1}{2}V(Y^n_{t_{k+1},2})\Delta X_k+\frac{1}{2}\sum_{l}G^n_{l,t_{k+1}}\Delta X_k^l,\label{step2RK2}\\
Y^n_t&=y+
\frac{1}{2}\int_{0}^{t}V(Y^n_{\lceil s \rceil ^n,1})dX_s^l+
\frac{1}{2}\int_{0}^{t}V(Y^n_{\lceil s \rceil ^n,2})dX_s^l+
\frac{1}{2}\sum_{l}\int_{0}^{t}G^n_{l,\lceil s \rceil ^n}dX_s^l
,\label{cstep2RK2}
\end{align}
where $\frac{1}{2}\sum_{l}\int_{0}^{t}G^n_{l,\lceil s \rceil ^n}dX_s^l$ contains 
$3$rd-level iterated integrals of $X$. We define the continuous versions for the stage values $Y^n_{t_{k+1},1}$ and $Y^n_{t_{k+1},2}$:

\begin{align*}
Y^n_{t,1}&= Y^n_{(t-h)\vee 0},\\
Y^n_{t,2}&= Y^n_{(t-h)\vee 0}+\int_{(t-h)\vee 0}^{t}V(Y^n_{\lfloor s \rfloor_n,1})d X_s.\\
\end{align*}
The continuous dependence of $Y^n_{\cdot,1}$ and $Y^n_{\cdot,2}$ on the driving noises follows from Proposition \ref{prop3}.
\begin{proposition}\label{lemmastep2}
	If $V\in \mathcal{C}^{N}_b(\mathbb{R}^m;\mathbb{R}^{m\times d})$ and $1/2<\beta<H$, then for any $n\in \mathbb{N_+}$,
	\begin{align*}
	\|Y^n_{\cdot,2}\|_{\beta}&\le C(N,V,\beta,T)\max\bigg\{\|X\|_{\beta},\|X\|^{N-1+1/\beta}_{\beta}\bigg\},\\
	\|Y^n_{\cdot,2}\|_{\infty}&\le |y|+C(N,V,\beta,T)\max\bigg\{\|X\|_{\beta},\|X\|^{N-1+1/\beta}_{\beta}\bigg\}.
	\end{align*}
	
	Moreover, for some $C_0>0$ and $0\le s<t\le T$ such that $\|X\|_\beta|t-s|^\beta\le C_0$, the estimate can be improved to
	\begin{align*}
	\|Y^n_{\cdot,2}\|_{s,t,\beta}\le C(N,V,\beta,T,C_0)\|X\|_{\beta}.
	\end{align*}
\end{proposition}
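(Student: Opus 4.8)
The plan is to derive all three bounds from Proposition~\ref{prop3} applied to $Y^n_\cdot$, using that $Y^n_{\cdot,1}$ and $Y^n_{\cdot,2}$ are obtained from $Y^n_\cdot$ by, respectively, a time shift by $h$ and one extra integration step over an interval of length at most $h$. First I would observe that $Y^n_{t,1}=Y^n_{(t-h)\vee 0}$ and that $t\mapsto(t-h)\vee 0$ is nonexpansive, whence $\|Y^n_{\cdot,1}\|_\infty\le\|Y^n_\cdot\|_\infty$, $\|Y^n_{\cdot,1}\|_{s,t,\beta}\le\|Y^n_\cdot\|_{(s-h)\vee 0,\,t,\,\beta}$ (in particular $\|Y^n_{\cdot,1}\|_\beta\le\|Y^n_\cdot\|_\beta$), and likewise for the discrete seminorms; thus $Y^n_{\cdot,1}$ inherits every estimate of Proposition~\ref{prop3} with the same constants.

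Next I would decompose, for $0\le s<t\le T$,
\begin{align*}
Y^n_{t,2}-Y^n_{s,2}
&=\big[Y^n_{(t-h)\vee 0}-Y^n_{(s-h)\vee 0}\big]\\
&\quad+\bigg[\int_{(t-h)\vee 0}^{t}V(Y^n_{\lfloor r\rfloor_n,1})dX_r-\int_{(s-h)\vee 0}^{s}V(Y^n_{\lfloor r\rfloor_n,1})dX_r\bigg],
\end{align*}
bound the first bracket by Proposition~\ref{prop3}, and estimate the two integrals in the second bracket by first subdividing each of $[(s-h)\vee 0,s]$ and $[(t-h)\vee 0,t]$ at the (at most two) grid points it contains, then applying the fractional-calculus estimate of Lemma~\ref{contro1} (in its $\lfloor\cdot\rfloor_n$ form, cf.\ Lemma~\ref{fcalculus2}) on each grid-aligned piece together with the bound on $\|Y^n_{\cdot,1}\|_{\beta,n}$ just obtained, and using $|(t-h)\vee 0-(s-h)\vee 0|\le|t-s|$. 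This produces an increment bound of the form $|Y^n_{t,2}-Y^n_{s,2}|\le C\big(1+\|Y^n_{\cdot,1}\|_{(s-h)\vee 0,\,t,\,\beta,n}|t-s|^\beta\big)\|X\|_\beta|t-s|^\beta$ plus the $\|Y^n_\cdot\|_\beta$ contribution of the first bracket.

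From here the argument runs exactly as at the ends of the proofs of Propositions~\ref{varRK} and \ref{prop3}: on a short interval with $\|X\|_\beta|t-s|^\beta\le C_0$, the local estimate of Proposition~\ref{prop3} makes the factor $1+\|Y^n_{\cdot,1}\|_{\cdots,\beta,n}|t-s|^\beta$ bounded, which yields $\|Y^n_{\cdot,2}\|_{s,t,\beta}\le C(N,V,\beta,T,C_0)\|X\|_\beta$; chaining this estimate over $O(\|X\|_\beta^{1/\beta})$ consecutive subintervals (and treating coarse $n$ separately, just as in Proposition~\ref{prop3}) then upgrades it to the global bounds on $\|Y^n_{\cdot,2}\|_\beta$ and $\|Y^n_{\cdot,2}\|_\infty$ with the stated $\max\{\|X\|_\beta,\|X\|_\beta^{N-1+1/\beta}\}$ dependence.

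I do not expect a substantive obstacle, since all the analytic weight is already carried by Proposition~\ref{prop3}; the only point that needs care is the interaction of the shift by $h$ with the partition — namely, when $s$ and $t$ are not grid points one must first subdivide $[(s-h)\vee 0,\,t]$ at its interior grid points before invoking the grid-aligned version of Lemma~\ref{contro1}, and then verify that the constants produced remain independent of $n$.
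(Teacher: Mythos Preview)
Your approach is correct and matches the paper, which gives no explicit proof but simply records that the result ``follows from Proposition~\ref{prop3}.'' One minor simplification: since $V$ is bounded and the integrand is a step function, the second bracket in your decomposition can be estimated directly by $C\|V\|_\infty\|X\|_\beta(t-s)^\beta$ (when $t-s<h$ rewrite it as $\int_s^t-\int_{(s-h)\vee 0}^{(t-h)\vee 0}$, when $t-s\ge h$ bound each integral over its interval of length $\le h\le t-s$), so Lemma~\ref{contro1} and the chaining argument are not needed and one obtains $\|Y^n_{\cdot,2}\|_{s,t,\beta}\le\|Y^n_\cdot\|_{(s-h)\vee 0,\,t,\,\beta}+C\|X\|_\beta$ immediately.
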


Based on \eqref{step2RK2}-\eqref{cstep2RK2} and arguments in subsection \ref{sec4.1}, we obtain that the simplified step-$2$ Euler scheme has the same leading term as the one of scheme \eqref{Z1}-\eqref{Z3}:
\begin{align*}
R^{lead}_t=\frac{1}{4}\sum_{k=0}^{nt/T-1}\sum_{i,q,l,l'}&\bigg[\int_{t_k}^{t_{k+1}}\int_{t_k}^{s} \partial_q V_l(Y^n_{t_{k+1},i})V^q_{l'}(Y^n_{t_{k+1},i})dX^{l'}_udX^l_s\\
&-\int_{t_k}^{t_{k+1}}\int_{s}^{t_{k+1}}  \partial_q V_l(Y^n_{t_{k+1},i})V^q_{l'}(Y^n_{t_{k+1},i})dX^{l'}_udX^l_s\bigg],\quad t=\lceil t \rceil^n.
\end{align*}

Therefore, we get the same strong convergence rate for the simplified step-$2$ Euler scheme as in Theorem \ref{theorem1}.

\begin{theorem}\label{theorem2}
	If $N\ge 2$, $V\in \mathcal{C}^{N+1}_b(\mathbb{R}^m;\mathbb{R}^{m\times d})$ and $H>1/2$, then the simplified step-$N$ Euler scheme for \eqref{sde} is $2H-\frac{1}{2}$. More precisely, there exists a constant $C$ independent of $n$ such that
	\begin{align*}
	\bigg\|\sup_{t\in[0,T]}|Y_t-Y^n_t|\bigg\|_{L^p(\Omega)}\le C h^{2H-\frac{1}{2}},\quad p\ge 1,
	\end{align*}
	where $h=\frac{T}{n}$ and $Y^n_t$ is defined by \eqref{cNEuler}.
\end{theorem}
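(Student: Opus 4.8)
The plan is to reduce Theorem \ref{theorem2} to Theorem \ref{theorem1} by exhibiting the simplified step-$N$ Euler scheme as a Runge--Kutta method satisfying \eqref{condition} up to a higher-order perturbation. I would first treat the case $N=2$ in detail, since the general $N\ge 2$ case follows by the same bookkeeping with more terms. The starting point is the rewriting \eqref{step2RK2}--\eqref{cstep2RK2}: by introducing the auxiliary stage values $Y^n_{t_{k+1},1}=Y^n_{(t-h)\vee 0}$ and $Y^n_{t_{k+1},2}=Y^n_{(t-h)\vee 0}+\int V(Y^n_{\lfloor s\rfloor_n,1})dX_s$, the scheme \eqref{cstep2} coincides with Heun's method \eqref{Z1}--\eqref{Z3} plus the extra term $\frac12\sum_l\int_0^t G^n_{l,\lceil s\rceil^n}dX^l_s$. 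Heun's method has $b=(\tfrac12,\tfrac12)$, $c=(0,1)$, hence $\sum b_i=1$ and $\sum b_ic_i=\tfrac12$, so it satisfies \eqref{condition}; and Proposition \ref{lemmastep2} gives the continuous-dependence bounds on the stage values $Y^n_{\cdot,1},Y^n_{\cdot,2}$ that play the role of Proposition \ref{varRK} in the proof of Theorem \ref{theorem1}.

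Next I would carry out the error decomposition $Y_t-Y^n_t=L_t+R_t$ exactly as in Section \ref{sec4.1}, with $R_t=\int_0^t V(Y^n_s)dX_s-\frac12\int_0^t[V(Y^n_{\lceil s\rceil^n,1})+V(Y^n_{\lceil s\rceil^n,2})]dX_s-\frac12\sum_l\int_0^t G^n_{l,\lceil s\rceil^n}dX^l_s$. The first two pieces are handled verbatim by the argument proving Theorem \ref{theorem1}: since the effective coefficients satisfy \eqref{condition}, all contributions involving second-level iterated integrals in the ``area'' form \eqref{multiple} cancel, the leading term $R^{lead}_t$ takes the stated Lévy-area form, and Lemma \ref{multi} (estimate \eqref{levyB}) together with Lemma \ref{trans} and Proposition \ref{lemmastep2} yields $\|R^{lead}_t-R^{lead}_s\|_{L^p(\Omega)}\le C|t-s|^{1/2}h^{2H-1/2}$, while the remainder $R_t-R^{lead}_t$ is controlled at rate $h^{2H}$ via \eqref{multiple}. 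The new ingredient is the extra term $\frac12\sum_l\int_0^tG^n_{l,\lceil s\rceil^n}dX^l_s$: by construction $G^n_{l,t_{k+1}}$ is itself of the form (bounded smooth coefficient)$\times\Delta X_k^{l'}\Delta X_k^{l''}$, so this term is a sum over intervals of third-level iterated integrals of $X$, and estimate \eqref{multiple} (with $N'=3$, giving rate $\ge 2H$ when $N''$ is even and $3H-1\ge 2H-\tfrac12$ borderline when $N''$ is odd — actually $3H-H=2H$ in the relevant mixed case, all $\ge 2H-\tfrac12$) shows its increments are $O(|t-s|^{1/2}h^{2H-1/2})$ in $L^p(\Omega)$, after passing through Lemma \ref{trans}.

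Having established the increment bound $\|R_t-R_s\|_{L^p(\Omega)}\le C|t-s|^{1/2}h^{2H-1/2}$ for $s,t\in h\mathbb{N}$, I would then run the $L_t$-part of the proof of Theorem \ref{theorem1} with no change: introduce the linearized resolvents $\Lambda^n,\Gamma^n$ solving the same variational equations driven by $S^l_s=\int_0^1\nabla V_l(\theta Y_s+(1-\theta)Y^n_s)d\theta$, use $Y_t-Y^n_t=\Lambda^n_t\int_0^t\Gamma^n_sdR_s$, bound the moments of $\|\Lambda^n\|,\|\Gamma^n\|$ by \cite[Lemma 3.1]{HuEuler} together with Proposition \ref{prop3} and Fernique's lemma (Remark \ref{Fernique}), split $\int_s^t\Gamma^n_udR_u$ across grid points with the boundary pieces handled by Young's inequality and the interior piece by Lemma \ref{trans}, and finally apply the Garsia--Rodemich--Rumsey / Besov embedding Lemma \ref{Besov} with $\alpha=\tfrac12-\tfrac1q$ to upgrade the pointwise $L^q(\Omega)$ increment bound on $n^{2H-1/2}\int_0^\cdot\Gamma^n_sdR_s$ to a bound on its supremum. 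The main obstacle is purely combinatorial: verifying that after inserting the Taylor expansions of $V(Y^n_{\lceil s\rceil^n,2})$ around $Y^n_{t_k}$ the ``RK part'' of the step-$N$ scheme really does reproduce the same $R^{lead}$ as Heun's method and that \emph{every} discarded term — including all pieces of $G^n$ and the higher Taylor remainders — is genuinely a grid-summable third-or-higher-level iterated integral to which \eqref{multiple} applies with exponent $\ge 2H-\tfrac12$; for general $N$ one must check that the substituted monomial $\tfrac1{N!}(\Delta X_k)^N$ differs from the corresponding stage-value expansion only by such higher-order terms, which is where the condition $V\in\mathcal{C}^{N+1}_b$ is consumed.
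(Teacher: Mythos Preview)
Your proposal is correct and follows essentially the same route as the paper: rewrite the simplified step-$N$ Euler scheme as Heun's method plus the higher-order remainder $G^n$ via \eqref{step2RK2}--\eqref{cstep2RK2}, invoke Proposition \ref{lemmastep2} in place of Proposition \ref{varRK}, and then rerun the proof of Theorem \ref{theorem1} verbatim, absorbing $G^n$ into the third-level remainder governed by \eqref{multiple}. Your exposition is in fact more detailed than the paper's, which simply records the resulting $R^{lead}_t$ and refers back to the argument of subsection \ref{sec4.1}; the only wrinkle is your parenthetical rate computation for $N'=3$, where the correct value is $r=2H$ in every case (both $N''=2$ and $N''=3$ give $r=2H$, and $N''\le 1$ gives $r=2$), so there is no ``borderline'' $3H-1$ case to worry about.
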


In particular, we have 
\begin{align*}
\bigg\|\max_{1\le k\le n}|Y_{t_k}-Y^n_{t_k}|\bigg\|_{L^p(\Omega)}\le C h^{2H-\frac{1}{2}},\quad p\ge 1.
\end{align*}
If we consider linear interpolation of $Y_{t_k}^n$, i.e., for any $t\in(t_k,t_{k+1}]$,
\begin{align}
Y_{t}^{n,linear}:=Y^n_{t_k}+\frac{t-t_k}{h}(Y^n_{t_{k+1}}-Y^n_{t_k}),\label{Milsteinlinear}
\end{align}
then
\begin{align*}
&\bigg\|\sup_{t\in[0,T]}|Y_t-Y^{n,linear}_t|\bigg\|_{L^p(\Omega)}\\
\le& \bigg\|\sup_{t\in[0,T]}|Y_t-Y^n_t|\bigg\|_{L^p(\Omega)}
+\bigg\|\sup_{t\in[0,T]}|Y^n_t-Y^{n,linear}_t|\bigg\|_{L^p(\Omega)}\\
\le& C h^{2H-\frac{1}{2}}
+C \bigg\|\sup_{t\in[0,T]}\bigg|X_t-X_{\lfloor t \rfloor_n}-\frac{t-\lfloor t \rfloor_n}{h}(X_{\lceil t \rceil ^n}-X_{\lfloor t \rfloor_n})\bigg|\bigg\|_{L^{2p}(\Omega)}\\
\le& C \bigg(h^{2H-\frac{1}{2}}+h^H\sqrt{\log \frac{T}{h}}\bigg),\quad p\ge 1,
\end{align*}
where the last inequality follows from \cite[Theorem 6]{AAP03PLE}. This result gives an answer to a conjecture in \cite{Deya} for the modified Milstein scheme defined by \eqref{step2} and \eqref{Milsteinlinear} when $H>\frac12$. Namely, the error of the modified Milstein scheme is caused by the approximation of L\'evy area with rate $2H-\frac12$ and the piecewise linear approximation of fBms with rate $h^H\sqrt{\log \frac{T}{h}}$ (see \cite{AAP03PLE}). 

\begin{remark}
	Our result indicates that the modified Milstein scheme is superior to the classical Euler method \cite{Euler08}, whose convergence rate is $2H-1$, $H\in(\frac12,1)$. Compared with the optimal convergence rate $\gamma$ of the modified Euler scheme \cite{HuEuler}, where  $\gamma=2H-\frac12$ when $H\in (\frac12,\frac34)$; $\gamma=1^-$ when $H=\frac34$; $\gamma=1$ when $H\in (\frac34,1)$, the modified Milstein scheme has higher order if $H\in [\frac34,1)$.
\end{remark}

\begin{figure}\label{f1}
	\centering
	\subfigure[$H=0.6$]{
		\begin{minipage}[t]{0.4\linewidth}
			\includegraphics[height=5cm,width=5cm]{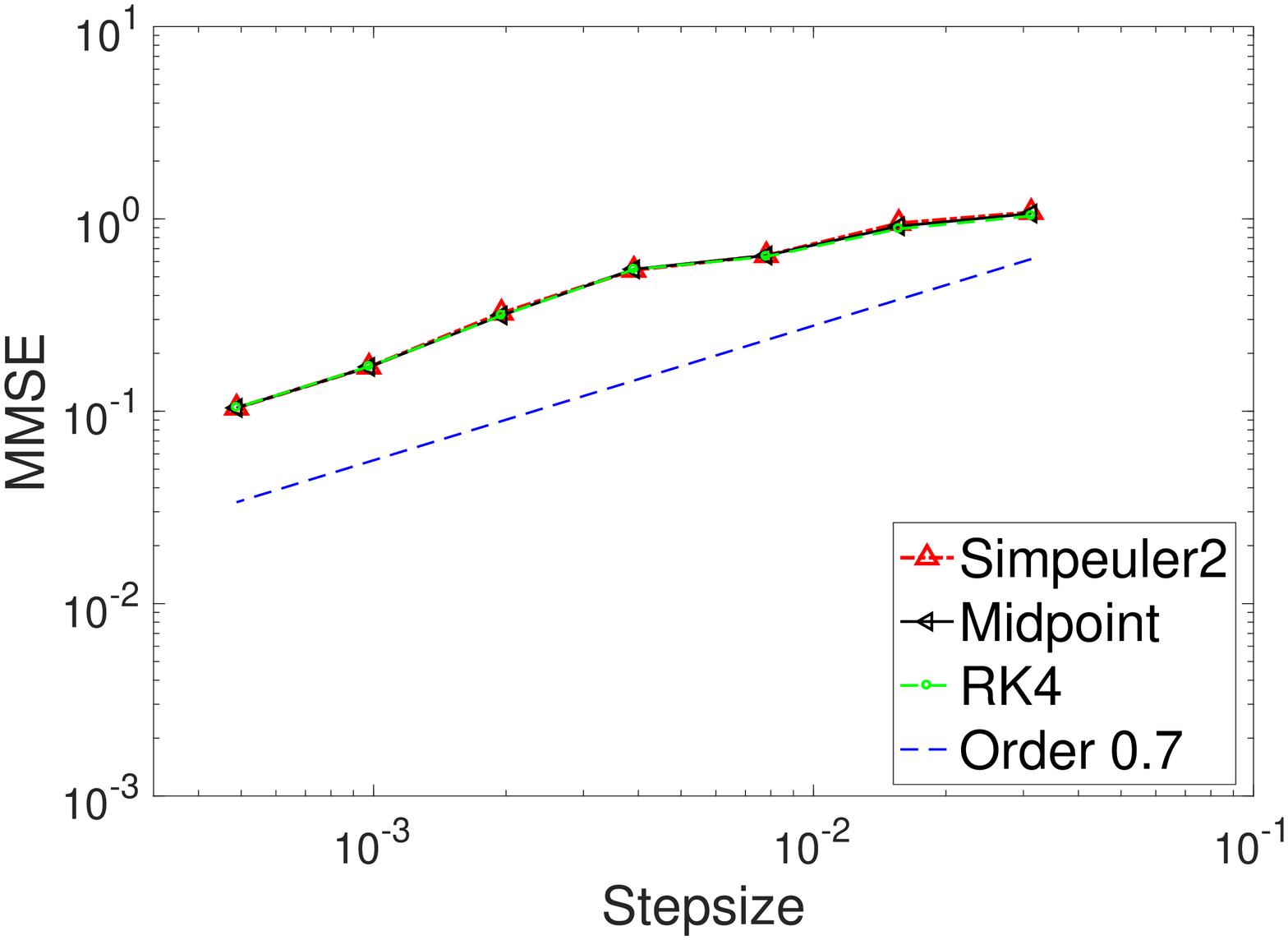}
		\end{minipage}
	}
	\subfigure[$H=0.7$]{
		\begin{minipage}[t]{0.4\linewidth}
			\includegraphics[height=5cm,width=5cm]{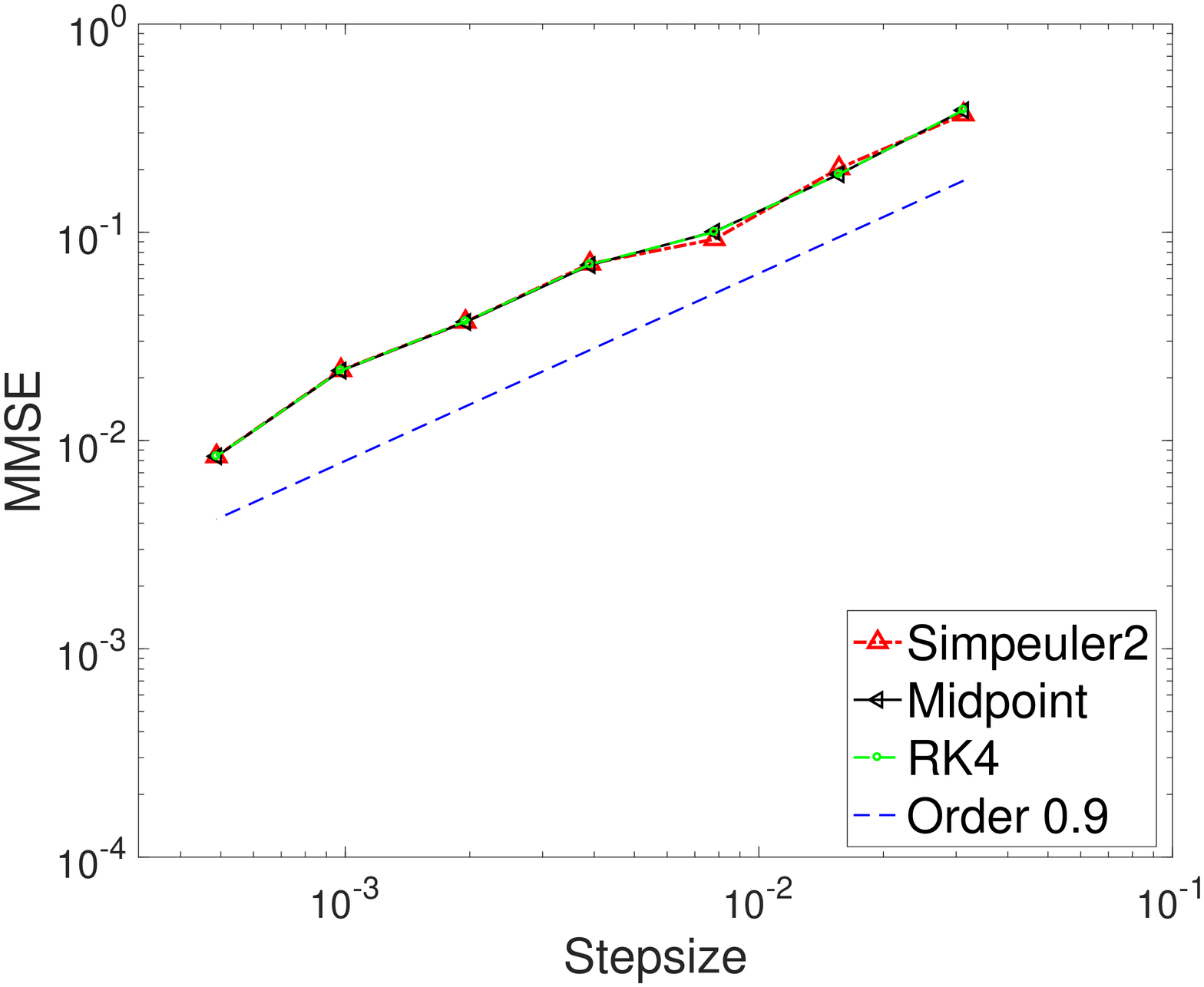}
		\end{minipage}
	}
	\subfigure[$H=0.8$]{
		\begin{minipage}[t]{0.4\linewidth}
			\includegraphics[height=5cm,width=5cm]{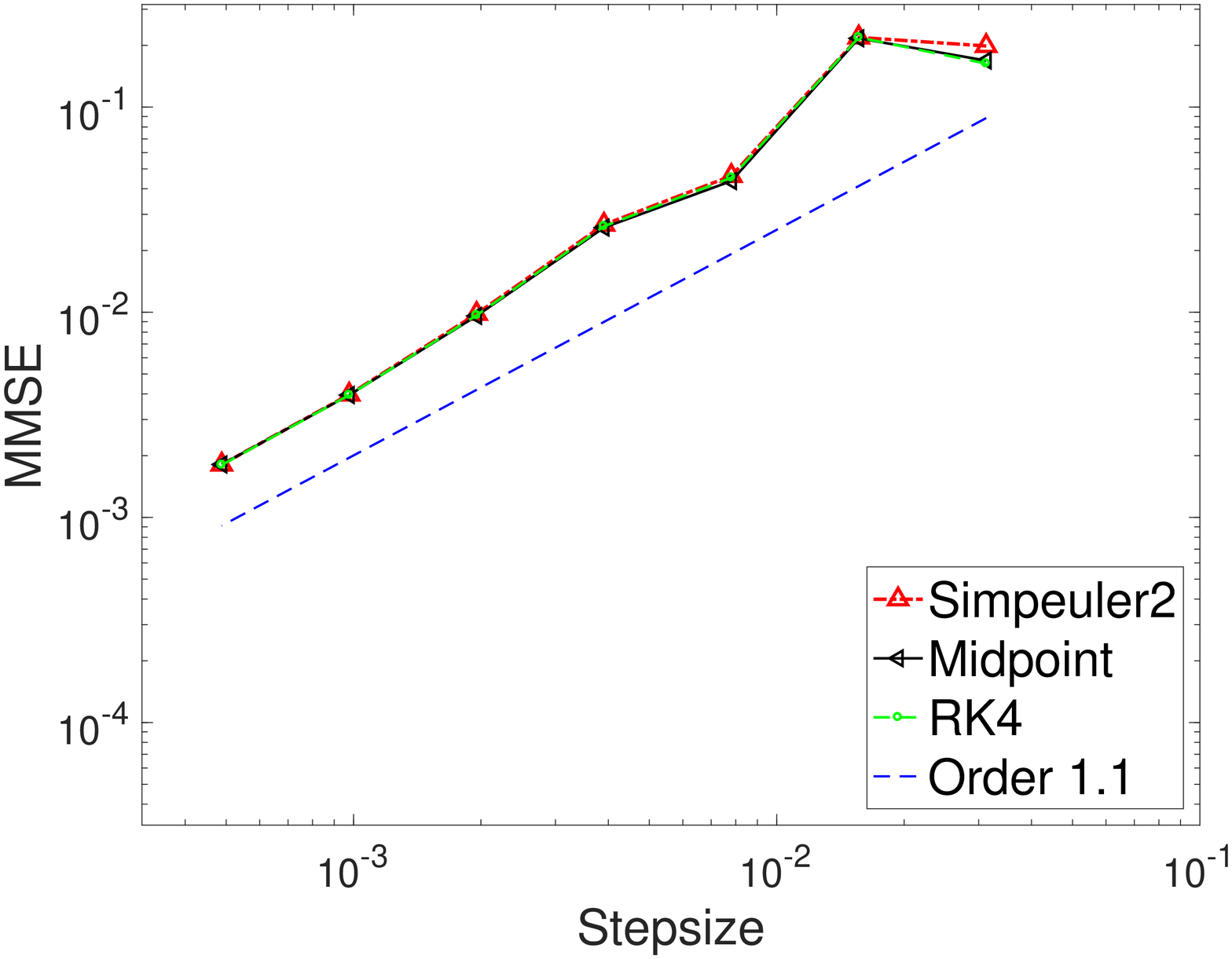}
		\end{minipage}
	}
	\subfigure[$H=0.9$]{
		\begin{minipage}[t]{0.4\linewidth}
			\includegraphics[height=5cm,width=5cm]{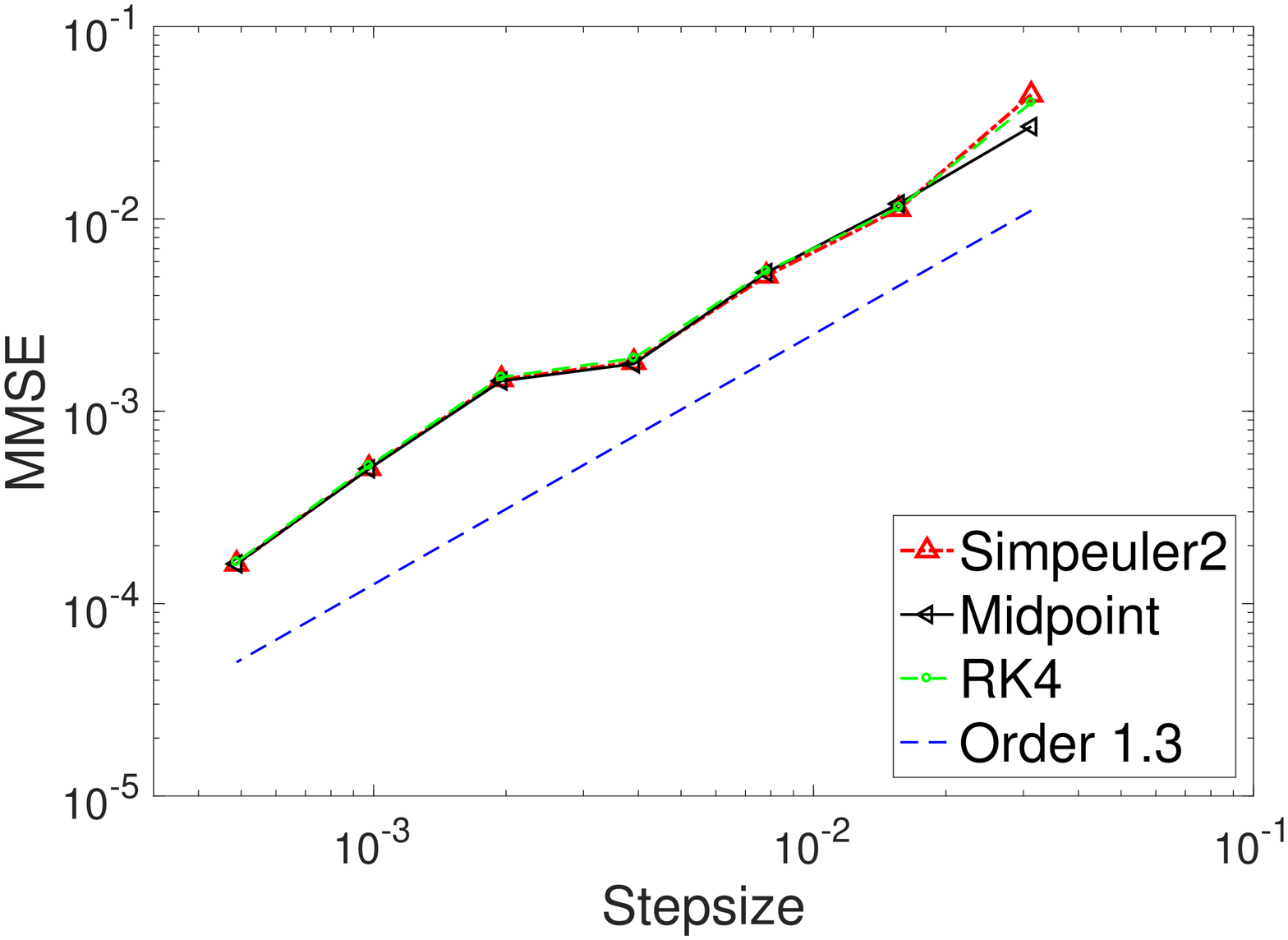}
		\end{minipage}
	}
	\caption{Maximum mean-square error (MMSE) vs. stepsize}
\end{figure}

\section{Numerical experiments}\label{sec5}
In this section, we give an example to verify our main theorems. Consider
\begin{align*}
dY_t&=3\sin(Y_t)dt+3\cos(Y_t)dX_t^2+3\sin(Y_t)dX_t^3,\quad t\in(0,1],\\
Y_0&=5,
\end{align*}
where $X^2$ and $X^3$ are independent fBms with Hurst parameter $H>\frac12$.
We compare the following three numerical schemes: simplified step-$2$ Euler scheme and two Runge--Kutta methods with coefficients expressed in the Butcher tableaus below

\begin{displaymath}
\begin{array}{c|c}
1/2&1/2\\
\hline
\ &1
\end{array},
\end{displaymath}
\begin{displaymath}
\begin{array}{c|cccc}
0&\ &\ &\ &\  \\
1/2&1/2&\ &\ &\  \\
1/2&0&1/2&\ &\  \\
1&0&0&1&\ \\
\hline
\ &1/6&2/6&2/6&1/6
\end{array}.
\end{displaymath}
In other words, the first method is the implicit midpoint scheme and the second one is a $4$-stage Runge--Kutta method satisfying conditions for order $4$ in deterministic case. Both of them satisfy condition \eqref{condition}.
Theorems \ref{theorem1} and \ref{theorem2} indicate that their maximum mean-square convergence rate is $2H-\frac12$, i.e.,
\begin{align*}
\Big\|\max_{1\le k\le n}|Y_{t_k}-Y^n_{t_k}|\Big\|_{L^2(\Omega)}\le C h^{2H-\frac{1}{2}},
\end{align*}
which is consistent with numerical results in Figure \ref{f1}. 
For each scheme, we use the numerical solution with time step $h=2^{-13}$ as the approximated `exact solution' for comparison. The number of sample paths is $1000$.


\begin{remark}
	As mentioned in the introduction, the rate $2H-\frac12$ is optimal since only increments of fBms are used in the methods under study. This fact is illustrated in Figure 1 that the 4-stage Runge-Kutta method shows the same order as other ones. Therefore, Runge--Kutta methods with stage $\mathbf{s}=1,2$ and the step-$2$ Euler scheme are enough for this rate. It is still an open problem to construct numerical schemes with orders higher than $2H-\frac12$, in which case efficient simulation of iterated integrals of multi-dimensional fBms should also be taken into consideration to make schemes implementable.
\end{remark}


\bibliography{bib}

\providecommand{\bysame}{\leavevmode\hbox to3em{\hrulefill}\thinspace}
\providecommand{\MR}{\relax\ifhmode\unskip\space\fi MR }
\providecommand{\MRhref}[2]{%
  \href{http://www.ams.org/mathscinet-getitem?mr=#1}{#2}
}
\providecommand{\href}[2]{#2}
\begin{thebibliography}{10}

\bibitem{MLMC16}
C.~Bayer, P.~K. Friz, S.~Riedel, and J.~Schoenmakers, \emph{From rough path
  estimates to multilevel {M}onte {C}arlo}, SIAM J. Numer. Anal. \textbf{54}
  (2016), no.~3, 1449--1483. \MR{3501366}

\bibitem{Davie}
A.~M. Davie, \emph{Differential equations driven by rough paths: an approach
  via discrete approximation}, Appl. Math. Res. Express. AMRX (2007), no.~2,
  Art. ID abm009, 40. \MR{2387018}

\bibitem{Deya}
A.~Deya, A.~Neuenkirch, and S.~Tindel, \emph{A {M}ilstein-type scheme without
  {L}\'evy area terms for {SDE}s driven by fractional {B}rownian motion}, Ann.
  Inst. Henri Poincar\'e Probab. Stat. \textbf{48} (2012), no.~2, 518--550.
  \MR{2954265}

\bibitem{Wzk}
P.~K. Friz and S.~Riedel, \emph{Convergence rates for the full {G}aussian rough
  paths}, Ann. Inst. Henri Poincar\'e Probab. Stat. \textbf{50} (2014), no.~1,
  154--194. \MR{3161527}

\bibitem{Friz}
P.~K. Friz and N.~B. Victoir, \emph{Multidimensional stochastic processes as
  rough paths}, Cambridge Studies in Advanced Mathematics, vol. 120, Cambridge
  University Press, Cambridge, 2010, Theory and applications. \MR{2604669}

\bibitem{GGH15mcom}
S.~Gottlieb, Z.~Grant, and D.~Higgs, \emph{Optimal explicit strong stability
  preserving {R}unge-{K}utta methods with high linear order and optimal
  nonlinear order}, Math. Comp. \textbf{84} (2015), no.~296, 2743--2761.
  \MR{3378846}

\bibitem{GN09AIHP}
M.~Gradinaru and I.~Nourdin, \emph{Milstein's type schemes for fractional
  {SDE}s}, Ann. Inst. Henri Poincar\'e Probab. Stat. \textbf{45} (2009), no.~4,
  1085--1098. \MR{2572165}

\bibitem{HLS06mcom}
J.~Hong, H.~Liu, and G.~Sun, \emph{The multi-symplecticity of partitioned
  {R}unge-{K}utta methods for {H}amiltonian {PDE}s}, Math. Comp. \textbf{75}
  (2006), no.~253, 167--181. \MR{2176395}

\bibitem{CN2017}
Y.~Hu, Y.~Liu, and D.~Nualart, \emph{Crank-{N}icolson scheme for stochastic
  differential equations driven by fractional {B}rownian motions}, arXiv:
  1709.01614.

\bibitem{HuEuler}
\bysame, \emph{Rate of convergence and asymptotic error distribution of {E}uler
  approximation schemes for fractional diffusions}, Ann. Appl. Probab.
  \textbf{26} (2016), no.~2, 1147--1207. \MR{3476635}

\bibitem{HuTaylor}
\bysame, \emph{Taylor schemes for rough differential equations and fractional
  diffusions}, Discrete Contin. Dyn. Syst. Ser. B \textbf{21} (2016), no.~9,
  3115--3162. \MR{3567804}

\bibitem{AAP03PLE}
J.~H\"usler, V.~Piterbarg, and O.~Seleznjev, \emph{On convergence of the
  uniform norms for {G}aussian processes and linear approximation problems},
  Ann. Appl. Probab. \textbf{13} (2003), no.~4, 1615--1653. \MR{2023892}

\bibitem{Firstorder2017}
Y.~Liu and S.~Tindel, \emph{First-order {E}uler scheme for {SDE}s driven by
  fractional {B}rownian motions: the rough case}, arXiv: 1703.03625.

\bibitem{Lyons}
T.~J. Lyons, M.~Caruana, and T.~L\'evy, \emph{Differential equations driven by
  rough paths}, Lecture Notes in Mathematics, vol. 1908, Springer, Berlin,
  2007, Lectures from the 34th Summer School on Probability Theory held in
  Saint-Flour, July 6--24, 2004, With an introduction concerning the Summer
  School by Jean Picard. \MR{2314753}

\bibitem{MJ68SIAMRev}
B.~B. Mandelbrot and J.~W. Van~Ness, \emph{Fractional {B}rownian motions,
  fractional noises and applications}, SIAM Rev. \textbf{10} (1968), 422--437.
  \MR{0242239}

\bibitem{Milstein}
G.~N. Milstein, Yu.~M. Repin, and M.~V. Tretyakov, \emph{Numerical methods for
  stochastic systems preserving symplectic structure}, SIAM J. Numer. Anal.
  \textbf{40} (2002), no.~4, 1583--1604. \MR{1951908}

\bibitem{Euler08}
Yu. Mishura and G.~Shevchenko, \emph{The rate of convergence for {E}uler
  approximations of solutions of stochastic differential equations driven by
  fractional {B}rownian motion}, Stochastics \textbf{80} (2008), no.~5,
  489--511. \MR{2456334}

\bibitem{Zahle98RTRF}
M.~Z\"ahle, \emph{Integration with respect to fractal functions and stochastic
  calculus. {I}}, Probab. Theory Related Fields \textbf{111} (1998), no.~3,
  333--374. \MR{1640795}

\end{thebibliography}
\bibliographystyle{amsplain}

\end{document}